\documentclass[11pt]{article}
\usepackage[titletoc,title]{appendix}
\usepackage{amsmath,amsfonts,amssymb,bm,amsthm,mathtools}
\usepackage[colorlinks,citecolor=blue,linkcolor=blue,pagebackref]{hyperref}
\usepackage{tikz}
\usetikzlibrary{shapes,arrows}
\usetikzlibrary{intersections,shapes.arrows}
\usetikzlibrary{decorations.pathreplacing}
\usetikzlibrary{arrows.meta}
\numberwithin{equation}{section}
\newtheorem{theorem}{Theorem}[section]
\newtheorem{lemma}[theorem]{Lemma}

\newtheorem{corollary}[theorem]{Corollary}

\theoremstyle{definition}

\newtheorem{definition}[theorem]{Definition}

\theoremstyle{remark}
\newtheorem{remark}[theorem]{Remark}

\usepackage{multicol}
\usepackage{longtable}

\usepackage[T1]{fontenc}
\usepackage[top=1in, bottom=1in, left=1in, right=1in]{geometry}
\usepackage{fancyhdr}
\newcommand{\dr}{\mathrm{d}}

\newcommand{\Ric}{\operatorname{Ric}}
\newcommand{\oRic}{\mathring{\mathrm{R}}\mathrm{ic}}
\newcommand{\Riem}{\operatorname{Riem}}
\newcommand{\Sc}{\operatorname{Sc}}
\newcommand{\curl}{\operatorname{curl}}
\newcommand{\prin}{\mathrm{prin}}

\DeclareMathOperator*{\Res}{Res}
\usepackage{enumerate}
\usepackage{xcolor,cancel}
\usepackage{relsize}

\renewcommand{\tilde}{\widetilde}

\usepackage{orcidlink}

\allowdisplaybreaks

\usepackage{pifont}
\renewcommand*{\backrefalt}[4]{%
\ifcase #1 %
No citations%
\or
\ding{43}~p.~#2%
\else
\ding{43}~pp.~#2%
\fi}

\begin{document}

\title{Higher order Weyl coefficients for the operator curl}

\author{
Giovanni Bracchi\,\orcidlink{0009-0005-0181-7092}
\thanks{GB:
Department of Mathematics,
University College London,
Gower Street,
London WC1E~6BT,
UK;
giovanni.bracchi.23@ucl.ac.uk.
}
\and
Matteo Capoferri\,\orcidlink{0000-0001-6226-1407}\thanks{MC:
Maxwell Institute for Mathematical Sciences
\&
Department of Mathematics,
Heriot-Watt University,
Edinburgh EH14 4AS, UK
\textit{and}
Dipartimento di Matematica ``Federigo Enriques'', Università degli Studi di Milano, Via C.~Saldini 50, 20133 Milano, Italy;
matteo.capoferri@unimi.it,
\url{https://mcapoferri.com}.
}
\and
Dmitri Vassiliev\,\orcidlink{0000-0001-5150-9083}
\thanks{DV:
Department of Mathematics,
University College London,
Gower Street,
London WC1E~6BT,
UK;
D.Vassiliev@ucl.ac.uk,
\url{http://www.homepages.ucl.ac.uk/\~ucahdva/}.
}}

\renewcommand\footnotemark{}

\date{3 July 2026}

\maketitle

\vspace{-.6cm}

\begin{abstract}
We establish refined spectral asymptotics for the operator curl acting on a connected oriented closed Riemannian 3-manifold. We treat positive and negative eigenvalues separately and obtain explicit formulae for the first six global Weyl coefficients. With local Weyl coefficients we compute the first four coefficients as well as the sixth one, and we determine the fifth up to a universal constant.
As a consequence, we prove that the eta function of curl (both in its local and global versions) is holomorphic in the complex half-plane $\operatorname{Re}s>-2$. Finally, under appropriate assumptions on the geodesic flow, we improve B\"ar's asymptotic formulae for the positive and negative counting functions, refining the remainder to $o(\lambda^2)$.

\

{\bf Keywords:} curl, spectral asymmetry, eta function, eta invariant, Weyl coefficients, spectral asymptotics.

\

{\bf 2020 MSC classes: }
primary
58J50; 
secondary
35P20, 
35Q61, 
47B93, 
47F99; 
58J28, 
58J40. 

\end{abstract}

\tableofcontents

\allowdisplaybreaks

\section{Statement of the problem and main results}
\label{Statement of the problem and main results}

The operator $\curl$ is one of the key operators of mathematical physics, alongside the Laplace--Beltrami and Dirac operators. Arguably, $\curl$ is the most fundamental of all, in that it is first-order as a differential operator and its definition requires a minimal number of geometric ingredients. 
In the 3-dimensional setting of classical physics, $\curl$ features prominently at the core of Maxwell's equations, as well as in hydrodynamics and magnetohydrodynamics, where its eigenfields --- known as \emph{Beltrami fields} --- are of physical significance.
In the context of Maxwell-type operators, $\curl$ has been studied from a variety of points of view, especially on bounded domains with physically meaningful boundary conditions --- see, e.g., \cite{safarov_curl,birman_curl,birman_curl2,lerner,
filonov_curl1,filonov_curl3,filonov_curl2}.

However, when it comes to the spectral geometry of $\curl$, the attention of the mathematical community has gained momentum only quite recently.
If we exclude the explicit calculation of the positive spectrum of $\curl$ on a Berger sphere performed by Lotay in a totally different context \cite{lotay}, the first significant contribution in this direction is due to B\"ar \cite{baer_curl}, who considered the $\curl$ operator on odd-dimensional closed oriented Riemannian manifolds, established basic spectral properties, obtained one-term Weyl asymptotics with sharp remainder, and computed the spectrum in several model cases. Another interesting recent line of enquiry is concerned with the phenomenon of spectral asymmetry, namely, the fact that in general positive and negative eigenvalues of the operator at hand are not mirror images of one another. For the special case of 3-dimensional closed manifolds, in \cite{curl, conjectures} the second and third author developed a new approach, analytic in flavour, to the study of spectral asymmetry for $\curl$, based on the use of pseudodifferential projections \cite{part1}. This approach is not specific to $\curl$, and can be deployed in a variety of scenarios, see, e.g., \cite{dirac_asymmetry}. Furthermore, there have been works on variational problems related to low eigenvalues of $\curl$, both in bounded domains and on closed Riemannian 3-manifolds \cite{peralta2, peralta1, peralta3}. Note that the spectral problem for $\curl$ on a closed manifold is different from spectral problems posited on manifolds with boundary or bounded domains. Indeed, the latter cannot be reduced to the former --- see \cite[Equations (1.3) and (1.4)]{curl} --- because physically meaningful boundary conditions prevent this.

\

We should like to emphasise that working with $\curl$ is more challenging than working with other fundamental operators of mathematical physics, in that $\curl$ is not elliptic. Furthermore, the fact that it acts on 1-forms, as opposed to scalar fields or spinors, makes the analysis trickier. In view of this, it is then not surprising that spectral geometric results on $\curl$ are few and far between, and the corresponding literature rather scant. Overall, working with $\curl$ is substantially different, both conceptually and technically, from scalar elliptic operators, and also different from Dirac-type operators, despite superficial similarities at the level of spectral asymptotics. Let us also observe that $\curl$ cannot be reduced to scalar operators by diagonalisation --- see, e.g., \cite{diagonalisation, obstructions}.

\

Within the context of the rich and noble history of research on spectral asymmetry, 
initiated by Atiyah, Patodi and Singer \cite{asymm1,asymm2,asymm3,asymm4},
our paper addresses the matter of detecting spectral asymmetry by looking at asymptotically large (in modulus) eigenvalues of the operator $\curl$, quantitatively improving and refining existing results, e.g.~\cite{baer_curl}, and providing explicit formulae. In particular, by carefully examining the $\curl$ propagator $e^{-it\curl}$, we establish for the first time the \emph{sixth} Weyl coefficients --- the lowest coefficients exhibiting asymmetry. 

\

Let $(M,g)$ be a connected oriented closed Riemannian manifold of dimension $d=3$.
We denote by $\Omega^k(M)=\Omega^k$ the Hilbert space of real-valued $k$-forms over $M$,
by $\dr$ the exterior derivative,
by $\delta$ the codifferential,
by $*$ the Hodge dual, and by $\rho=\rho(x)$ the Riemannian density.
Moreover, we denote by $\Riem$ the Riemann curvature tensor, by $\Ric$ the Ricci tensor, by $\Sc$ scalar curvature, and by
\begin{equation}
\label{trace-free Ricci}
\oRic_{\alpha\beta}:=\Ric_{\alpha\beta}-\frac13 \Sc \,g_{\alpha\beta}
\end{equation}
the trace-free Ricci tensor. Finally, we define 
\begin{equation}
\label{Levi-Civita tensor}
E_{\alpha\beta \gamma}(x):=\rho(x)\, \varepsilon_{\alpha\beta\gamma}
\end{equation}
to be the totally antisymmetric tensor, where $\varepsilon$ is the totally antisymmetric Levi-Civita symbol, $\varepsilon_{123}=1$.
Throughout the paper we adopt the differential geometric conventions
from~\cite[Appendix~A]{curl}, in particular, on the choice of sign of curvature.

\

This paper is concerned with the study of the operator $\curl$. As a differential expression $\curl$ is given by
\begin{equation*}
\label{curl as a differential expression}
\curl:=*\dr
\end{equation*}
acting on real-valued 1-forms.
The latter can be promoted to a self-adjoint operator
\begin{equation*}
\label{curl as an operator}
\curl:\delta\Omega^2\cap H^1\to\delta\Omega^2\,,
\end{equation*}
with discrete spectrum accumulating both to $+\infty$ and $-\infty$. Furthermore, zero is never an eigenvalue, see \cite{giga,curl}.

Let
\begin{equation}
\label{ev_curl}
\dots\le\lambda_{-2}\le\lambda_{-1}<0<\lambda_1\le\lambda_2\le\dots
\end{equation}
be the eigenvalues of $\curl$ enumerated in increasing order with account of multiplicity,
and let $u_j$, $j\in\mathbb{Z}\setminus\{0\}$, be the corresponding
normalised eigenforms.

Generically, the spectrum of $\curl$ is asymmetric about zero.
We study the positive and negative eigenvalues of $\curl$ separately
and define the two global counting functions as
\begin{equation}
\label{global counting functions for curl}
N_\pm(\lambda)
:=
\begin{cases}
0\quad&\text{for}\quad\lambda\le0,
\\
\sum_{0<\pm\lambda_j<\lambda}
\,1
\quad&\text{for}\quad\lambda>0,
\end{cases}
\end{equation}
and the two local counting functions as
\begin{equation}
\label{local counting functions for curl}
N_\pm(y;\lambda)
:=
\begin{cases}
0\quad&\text{for}\quad\lambda\le0,
\\
\sum_{0<\pm\lambda_j<\lambda}
\,*(u_j\wedge *u_j)(y)
\quad&\text{for}\quad\lambda>0,
\end{cases}
\qquad y\in M.
\end{equation}
Clearly, the local and global counting functions are related by the identity
\begin{equation*}
\label{relation between local and global counting functions}
N_\pm(\lambda)
=
\int_M
N_\pm(y;\lambda)\,\rho(y)\,\dr y\,,
\end{equation*}
where $\rho$ is the Riemannian density.

The goal of this paper is the study of the asymptotic behaviour of the counting functions
\eqref{global counting functions for curl}
and
\eqref{local counting functions for curl}
as $\lambda\to+\infty$.

Our first main result is

\begin{theorem}
\label{theorem 1}
We have
\begin{equation}
\label{theorem 1 equation 1}
N_\pm(y;\lambda)
=
\frac{1}{6\pi^2}\,\lambda^3
+
O(\lambda^2)
\qquad \text{as}\qquad \lambda \to + \infty
\end{equation}
with remainder uniform over $y\in M$.
\end{theorem}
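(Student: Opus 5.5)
Our strategy is to reduce the statement to the standard one-term local Weyl law with sharp remainder for a first-order elliptic pseudodifferential operator. The reduction goes through the pseudodifferential projections of \cite{part1,curl}.

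\textbf{Step 1: the projections.} Recall from \cite{curl} that there are pseudodifferential operators $P_0,P_\pm$ of order zero acting on 1-forms with the following properties. They are mutually orthogonal projections modulo smoothing operators, and $P_0+P_++P_-=\mathrm{Id}$. The projection $P_0$ is, modulo smoothing, the projection onto $\dr\Omega^0$. The projections $P_\pm$ correspond to the positive and negative eigenspaces of the principal symbol of $\curl$ restricted to $\delta\Omega^2$. The principal symbol of $\curl$ has eigenvalues $0$ and $\pm\|\xi\|$, each of multiplicity one.

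\textbf{Step 2: an elliptic operator for each sign.} We introduce
\[
A_\pm:=\pm P_\pm\curl P_\pm+(\mathrm{Id}-P_\pm)\sqrt{-\Delta+\mathrm{Id}}\,(\mathrm{Id}-P_\pm),
\]
where $\Delta$ is the Hodge Laplacian on 1-forms. This operator is self-adjoint, elliptic and first order, with principal symbol $\|\xi\|\,\mathrm{Id}$. Its spectrum therefore consists, modulo $O(\lambda^{-\infty})$ errors, of two parts:
\begin{itemize}
\item the eigenvalues $\pm\lambda_j>0$ of $\curl$, carried by $P_\pm$;
\item the eigenvalues of $\sqrt{-\Delta+1}$ on the complementary subspaces, which have principal symbol rank two.
\end{itemize}

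A cleaner route avoids $A_\pm$ altogether and works directly with the wave group. Consider $P_\pm\,e^{-it\curl}$ for small $|t|$. This is a Lagrangian distribution whose symbol we can construct explicitly, and its integral kernel on the diagonal, $\operatorname{tr}\,[P_\pm e^{-it\curl}](y,y)$, has a singularity at $t=0$ of the form $\int e^{\mp it|\xi|}\,\frac{\dr\xi}{(2\pi)^3}$ plus lower-order terms. The reason is that the principal symbol of $P_\pm$ is a rank-one projection, and this rank one is what produces the constant $\frac{1}{6\pi^2}$: indeed $\frac{1}{(2\pi)^3}\cdot\frac{4\pi}{3}=\frac{1}{6\pi^2}$.

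\textbf{Step 3: the Tauberian argument.} We set $\mu_\pm(y;\lambda):=\sum_{0<\pm\lambda_j<\lambda}(\cdots)$ and apply the standard Fourier Tauberian theorem of Levitan–Avakumovi\'c–H\"ormander type, as in Safarov–Vassiliev. Let $\hat\chi$ be a cutoff supported in a small interval around $t=0$ on which the wave kernel has no other singularities; this interval is uniform in $y$ by compactness and the injectivity radius. From the small-time expansion:
\begin{itemize}
\item $\chi * \dr N_\pm(y;\cdot)$ equals $\frac{1}{2\pi^2}\lambda^2+O(\lambda)$ uniformly in $y$;
\item this also gives the a priori bound $N_\pm(y;\lambda+1)-N_\pm(y;\lambda)=O(\lambda^2)$.
\end{itemize}
The Tauberian theorem then yields \eqref{theorem 1 equation 1} with remainder $O(\lambda^2)$ uniform in $y$. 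We must also check that the $P_0$ part and the smoothing errors of the projections contribute only $O(\lambda^{-\infty})$. This holds because $\curl$ vanishes on $\dr\Omega^0$ and the eigenforms $u_j$ lie in $\delta\Omega^2$, so $P_\pm u_j=u_j$ modulo smoothing when $\pm\lambda_j>0$, while $P_\mp u_j=O(\lambda_j^{-\infty})$.

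\textbf{Main obstacle.} The delicate point is making this last step uniform. The relation $P_\pm u_j\approx u_j$ for large $\pm\lambda_j$ holds only modulo smoothing operators. We need these errors to be summable pointwise in $y$ after applying the Tauberian theorem, which requires rapid decay in $\lambda_j$ of $P_\mp u_j$ in $C^0$ norms. The tool we would try first is elliptic regularity for $\curl$ restricted to $\delta\Omega^2$, where $\curl^2=-\Delta$ acts on coclosed forms, combined with Sobolev embedding.
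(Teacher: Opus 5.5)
Your route is the paper's: realise $U_\pm(t)$ for small $|t|$ as an oscillatory integral whose principal symbol at $t=0$ is the rank-one projection $P^{(\pm)}$. From this you get $(N_\pm'*\mu)(y;\lambda)=\frac{1}{2\pi^2}\lambda^2+O(\lambda)$ uniformly in $y$, and you finish with a Fourier Tauberian theorem. The paper uses \cite[Corollary~B.2.2]{SaVa}, and an $O(\lambda)$ mollified remainder is indeed enough for $O(\lambda^2)$.

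The gap is that you treat as routine the one step that carries the content: ``$P_\pm e^{-it\curl}$ is a Lagrangian distribution whose symbol we can construct explicitly.'' Since $\curl$ is not elliptic (one eigenvalue of $\curl_\prin$ is identically zero), the standard half-wave parametrix theory does not apply to $e^{-it\curl}$ as it stands. The paper's symbolic construction leads to overdetermined transport equations. Identifying the resulting oscillatory integrals with $U_\pm(t)$ modulo $\Psi^{-\infty}$ takes all of Section~\ref{Justification of the algorithm}, via the elliptic extended curl and the identity $\delta V_\pm=0\mod\Psi^{-\infty}$.

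You need some argument here, and a short one is available from an ingredient you already mention. On $\delta\Omega^2$ one has $\curl^2=\delta\dr$, which there coincides with the Hodge Laplacian $\delta\dr+\dr\delta$. Hence $U_\pm(t)=e^{\mp it\sqrt{\delta\dr+\dr\delta}}\,P_\pm$ exactly, as operators on $\Omega^1$. The half-wave group of the Hodge Laplacian on 1-forms (elliptic, with scalar principal symbol $\|\xi\|^2$) is a standard Fourier integral operator along the geodesic flow. The operator $P_\pm$ is pseudodifferential of order zero by \cite{curl}. The composition is therefore a Fourier integral operator with principal symbol $P^{(\pm)}$ at $t=0$, and $\operatorname{tr}P^{(\pm)}=1$ gives your constant $\frac{1}{6\pi^2}$.

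Conversely, the ``main obstacle'' you single out is not an obstacle. In \cite{curl} the $P_\pm$ are the \emph{exact} spectral projections of $\curl$, not projections modulo $\Psi^{-\infty}$; that they are pseudodifferential is a theorem there. So $P_\pm u_j=u_j$ and $P_\mp u_j=0$ exactly whenever $\pm\lambda_j>0$. Consequently the trace of the Schwartz kernel of $U_\pm(t)$ on the diagonal is exactly $\sum_{\pm\lambda_j>0}e^{-i\lambda_jt}\,{*}(u_j\wedge{*}u_j)(y)$. The only smoothing errors are those of the parametrix; they are $C^\infty$ in $(t,x,y)$ and contribute $O(\lambda^{-\infty})$ uniformly in $y$ after mollification. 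No pointwise decay estimate for $P_\mp u_j$ is required, and the $A_\pm$ detour can simply be dropped.
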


Theorem~\ref{theorem 1} immediately implies

\begin{corollary}
\label{corollary 1}
We have
\begin{equation}
\label{corollary 1 equation 1}
N_\pm(\lambda)
=
\frac{\operatorname{Vol}M}{6\pi^2}\,\lambda^3
+
O(\lambda^2)
\qquad \text{as}\qquad \lambda \to + \infty
\,.
\end{equation}
\end{corollary}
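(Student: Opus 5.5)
Corollary~\ref{corollary 1} follows from Theorem~\ref{theorem 1} by integration over $M$. Since $N_\pm(\lambda)=\int_M N_\pm(y;\lambda)\,\rho(y)\,\dr y$, we substitute \eqref{theorem 1 equation 1} into the integrand. The main term gives $\frac{\lambda^3}{6\pi^2}\int_M\rho(y)\,\dr y=\frac{\operatorname{Vol}M}{6\pi^2}\lambda^3$.

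For the remainder, write $N_\pm(y;\lambda)-\frac{1}{6\pi^2}\lambda^3=R_\pm(y;\lambda)$. Theorem~\ref{theorem 1} asserts that this remainder is uniform in $y$: there exist constants $C$ and $\lambda_0$ such that $|R_\pm(y;\lambda)|\le C\lambda^2$ for all $y\in M$ and all $\lambda\ge\lambda_0$. Integrating this inequality against the Riemannian density gives
\[
\left|\int_M R_\pm(y;\lambda)\,\rho(y)\,\dr y\right|\le C\operatorname{Vol}M\,\lambda^2 .
\]
Hence the integrated remainder is $O(\lambda^2)$, which proves \eqref{corollary 1 equation 1}.

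The only step that needs care is measurability and integrability of $R_\pm(\cdot;\lambda)$. For fixed $\lambda$, the function $N_\pm(\cdot;\lambda)$ is a finite sum of the smooth functions $*(u_j\wedge *u_j)$, so it is continuous. The integral is therefore well defined, and since $M$ is compact, $\operatorname{Vol}M<\infty$. I do not expect any real obstacle. The uniformity clause in Theorem~\ref{theorem 1} is exactly what allows the pointwise $O(\lambda^2)$ bound to pass under the integral.
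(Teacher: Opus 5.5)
Your proposal is correct and is essentially the paper's argument: the paper states that Theorem~\ref{theorem 1} immediately implies the corollary, via the identity $N_\pm(\lambda)=\int_M N_\pm(y;\lambda)\,\rho(y)\,\dr y$. The uniformity of the remainder in $y$ is what allows the $O(\lambda^2)$ bound to pass under the integral, which is exactly the point you make. (The paper also credits the result to B\"ar and refers to \cite{curl} for a detailed independent proof.)
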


Corollary~\ref{corollary 1} is due to B\"ar \cite{baer_curl}.
For a detailed proof see \cite{curl}.

Under appropriate geometric assumptions on the geodesic flow
the remainder $O(\lambda^2)$ in formulae
\eqref{theorem 1 equation 1}
and
\eqref{corollary 1 equation 1}
can be replaced by $o(\lambda^2)$. In what follows we formulate these geometric conditions.

Let $(x^{(+)}(t;y,\eta),\xi^{(+)}(t;y,\eta))$ be the geodesic flow
on $T^*M\setminus\{0\}$,
with $(y,\eta)$ being the starting point.
See formulae
\eqref{eigenvalues principal symbol curl}--\eqref{Hamilton's equations}
for details.
We say that we are looking at a \emph{geodesic loop} if
$x^{(+)}(T;y,\eta)=y$ for some $T>0$.
We say that we are looking at a \emph{periodic geodesic} if
$(x^{(+)}(T;y,\eta),\xi^{(+)}(T;y,\eta))=(y,\eta)$ for some $T>0$.

Let $S^*_yM:=\{\eta\ |\ \|\eta\|=1\}\subset T^*_yM$
be the unit cosphere at the point $y\in M$.
Denote by $\Pi_y$ the set of $\eta\in S^*_yM$
such that $(y,\eta)$ is a starting point for a geodesic loop.
The Riemannian metric on $M$ induces a natural Lebesgue measure on
$S^*_yM$ and it is known
\cite[Lemma~1.8.2]{SaVa}
that the set $\Pi_y$ is measurable.

Similarly, let $S^*M:=\{(y,\eta)\ |\ \|\eta\|=1\}\subset T^*M$
be the unit cosphere bundle.
Denote by $\Pi$ the set of $(y,\eta)\in S^*M$
which serve as starting points for periodic geodesics.
The Riemannian metric on $M$ induces a natural Lebesgue measure on
$S^*M$ and it is known
\cite[Lemma~1.3.4]{SaVa}
that the set $\Pi$ is measurable.

\begin{definition}
\label{Definition 1 of nonfocal}
A point $y\in M$ is said to be \emph{nonfocal}
if the set $\Pi_y$ has measure zero.
\end{definition}

\begin{definition}
\label{Definition 1 of nonperiodicity}
We say that the \emph{nonperiodicity condition} is fulfilled
if the set $\Pi$ has measure zero.
\end{definition}

The sets $\Pi_y$ and $\Pi$ may be large and proving that they have zero measure may not be easy.
This impediment can be overcome by working with smaller sets which have the same measure
as the original sets $\Pi_y$ and $\Pi$.

We call a loop of length $T>0$ \emph{absolutely focussed}
if distance squared between
$x^{(+)}(T;y,\eta)$ and $y$ has an infinite order zero as a function of $\eta$.
By $\Pi_y^\mathrm{a}$ we denote the set of $\eta\in\Pi_y$
such that $(y,\eta)$ is a starting point for an absolutely focussed geodesic loop.
It is known
\cite[Lemma~1.8.3]{SaVa}
that the set $\Pi_y^\mathrm{a}$ is measurable
and, moreover, the set $\Pi_y\setminus\Pi_y^\mathrm{a}$ has measure zero.
In other words, almost all loops are absolutely focussed.

Similarly, we call a $T$-periodic geodesic \emph{absolutely periodic}
if distance squared between
\linebreak
$(x^{(+)}(T;y,\eta),\xi^{(+)}(T;y,\eta))$ and $(y,\eta)$
has an infinite order zero as a function of $(y,\eta)$.
By $\Pi^\mathrm{a}$ we denote the set of $(y,\eta)\in\Pi$
such that $(y,\eta)$ is a starting point for an absolutely periodic geodesic.
It is known
\cite[Corollary~1.3.6]{SaVa}
that the set $\Pi^\mathrm{a}$ is measurable
and, moreover, the set $\Pi\setminus\Pi^\mathrm{a}$ has measure zero.
In other words, almost all periodic geodesics are absolutely periodic.

Definitions
\ref{Definition 1 of nonfocal}
and
\ref{Definition 1 of nonperiodicity}
can now be reformulated in the following more user-friendly manner.

\begin{definition}
\label{Definition 2 of nonfocal}
A point $y\in M$ is said to be \emph{nonfocal}
if the set $\Pi_y^\mathrm{a}$ has measure zero.
\end{definition}

\begin{definition}
\label{Definition 2 of nonperiodicity}
We say that the \emph{nonperiodicity condition} is fulfilled
if the set $\Pi^\mathrm{a}$ has measure zero.
\end{definition}

Note that if the Riemannian manifold $(M,g)$ is real analytic
Definitions
\ref{Definition 2 of nonfocal}
and
\ref{Definition 2 of nonperiodicity}
simplify further.
Namely, a point $y\in M$ is nonfocal if and only if there does not exist
an absolutely focussed geodesic loop emanating from $y$.
Similarly, the nonperiodicity condition is fulfilled if and only if there does not exist
an absolutely periodic geodesic.

Our second and third main results are the following two theorems.

\begin{theorem}
\label{Theorem main result 2}
Suppose the point $y\in M$ is nonfocal. Then the local counting functions admit the refined asymptotic expansion
\begin{equation}
\label{Theorem main result 2 equation 1}
N_\pm(y;\lambda)=\frac{1}{6\pi^2}\,\lambda^3 + o(\lambda^2) \qquad \text{as}\qquad \lambda \to + \infty\,.
\end{equation}
\end{theorem}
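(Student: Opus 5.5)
The plan is to reduce the statement to the standard two-term Tauberian machinery of Safarov--Vassiliev \cite{SaVa}, applied to two first-order elliptic pseudodifferential operators that encode the positive and negative parts of $\curl$. Following \cite{curl}, let $P_\pm$ be the pseudodifferential projections onto the positive and negative spectral subspaces of $\curl$, modulo smoothing. Then $N_\pm(y;\lambda)$ coincides with the local counting function of the first-order operator $\pm\curl P_\pm$ restricted to its positive spectrum. Equivalently, and more conveniently, it is the counting function of $A_\pm:=\pm\curl\, P_\pm + \sqrt{-\Delta}\,(\mathrm{Id}-P_\pm)$ up to a smoothing correction. Here $A_\pm$ is a self-adjoint elliptic first-order $\Psi$DO on $1$-forms whose principal symbol has the single eigenvalue $\|\xi\|$ with multiplicity $1$ on the range of $P_\pm$. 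The local counting function is then $N_\pm(y;\lambda)=\int_{-\infty}^{\lambda}\operatorname{tr}\bigl(P_\pm\,\dr E_{\lambda'}\bigr)(y,y)$. Its Fourier transform in $\lambda$ is the trace of the diagonal of the propagator $P_\pm e^{\mp it\curl}$ at $y$.

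\textbf{Step 1 (small times).} Construct the propagator $U_\pm(t)=P_\pm e^{\mp it\curl}$ as a Lagrangian distribution (Fourier integral operator) with phase associated with the Hamiltonian $h^{(+)}(x,\xi)=\|\xi\|$, whose flow is the geodesic flow $(x^{(+)},\xi^{(+)})$. Compute its principal symbol together with the subprincipal (Berry phase) correction. Near $t=0$, the singularity of $\operatorname{tr}U_\pm(t)(y,y)$ then gives the two-term expansion $\frac{1}{6\pi^2}\lambda^3 + c_\pm(y)\lambda^2$. I expect $c_\pm(y)=0$, because the second local Weyl coefficient of a first-order operator is given by an integral over $S^*_yM$ of the subprincipal symbol contribution, and here this vanishes. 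Concretely, the relevant scalar is odd in $\xi$, or equivalently the second coefficient is a curvature-free local invariant of the wrong homogeneity to be nonzero. So the claimed formula carries no $\lambda^2$ term. Together with Theorem~\ref{theorem 1}, which already supplies the $O(\lambda^2)$ bound, this reduces the statement to controlling the $\lambda^2$-order oscillatory remainder.

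\textbf{Step 2 (Tauberian argument).} Apply the Safarov--Vassiliev Tauberian theorem \cite[Ch.~1]{SaVa}. For any $T>0$, the contribution of $t\in(0,T]$ to the $\lambda^2$ term is bounded by a constant times the measure of the set of $\eta\in S^*_yM$ for which $x^{(+)}(t;y,\eta)=y$ for some $t\le T$, up to terms that are $o(\lambda^2)$. The precise formulation uses the sets $\Pi_y^{\mathrm{a}}$: non-absolutely-focussed loops contribute $o(\lambda^2)$ by stationary phase. Since $y$ is nonfocal, this measure is zero, so the singularities of $\operatorname{tr}U_\pm(t)(y,y)$ for $t\ne0$ contribute only $o(\lambda^2)$. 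Letting $T\to\infty$ gives the remainder $o(\lambda^2)$.

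\textbf{Main obstacle.} Because $\curl$ is not elliptic, the general theory of \cite{SaVa} does not apply verbatim. The key step is therefore to verify that working on the range of $P_\pm$ (i.e.\ with $A_\pm$) changes the counting function only by $O(1)$ in $\lambda$. It must also be checked that the Tauberian theorem for matrix operators with a single eigenvalue branch of constant multiplicity goes through. The kernel of $\curl$, namely exact forms, lies in the infinite-dimensional eigenspace of eigenvalue $0$ of $A_\pm$ modified appropriately, and it must be excluded. I would handle this by replacing $(\mathrm{Id}-P_\pm)$ with the projection onto exact forms plus the opposite-sign projection, each shifted by a large positive or negative constant, so that it does not contribute to counting in $(0,\lambda)$. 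A secondary technical point is confirming that $c_\pm(y)=0$ from the subprincipal symbol computation in \cite{curl}.
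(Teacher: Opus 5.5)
\textbf{The reduction fails as written.} You correctly flag the reduction to an elliptic operator as the crux, but your operator does not do the job. The principal symbol of $A_\pm=\pm\curl P_\pm+\sqrt{-\Delta}\,(\mathrm{Id}-P_\pm)$ is $\|\xi\|P^{(\pm)}+\|\xi\|(P^{(\mp)}+P^{(0)})=\|\xi\|\,\mathrm{I}$. So $\|\xi\|$ is an eigenvalue of multiplicity three, not one. Spectrally, take $-\Delta=\dr\delta+\delta\dr$ on $1$-forms; it preserves the Hodge decomposition and commutes with $\curl$. Then $A_+$ acts as $\curl$ on $\operatorname{Ran}P_+$, as $|\curl|$ on $\operatorname{Ran}P_-$, and as $\sqrt{-\Delta}$ on exact forms. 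Its local positive counting function is therefore $N_+(y;\lambda)+N_-(y;\lambda)$ plus a counting function for exact forms, with leading term $\frac{1}{2\pi^2}\lambda^3$. The discrepancy with $N_+(y;\lambda)$ is thus of order $\lambda^3$, not $O(1)$ or smoothing. For the same reason, exact forms do not lie in a zero eigenspace of your $A_\pm$. Your proposed remedy does not fix this. Adding constants, however large, to the complementary blocks is a bounded perturbation, so it cannot push the unbounded positive spectrum of a positive first-order operator out of $(0,\lambda)$. Replacing $\sqrt{-\Delta}$ there by a constant would instead destroy ellipticity, since the principal symbol becomes the rank one matrix $\|\xi\|P^{(\pm)}$.

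\textbf{How to repair it, and comparison with the paper.} What is needed is a sign reversal on the complement, for example $\curl P_+-(\sqrt{-\Delta}+1)(\mathrm{Id}-P_+)$. Alternatively, avoiding $P_\pm$ altogether, take $B_\pm:=\pm\curl-2(\sqrt{-\Delta}+1)(P_0+P_{\mathcal H^1})$. This is a self-adjoint elliptic first-order pseudodifferential operator whose principal symbol has simple eigenvalues $\|\xi\|$, $-\|\xi\|$, $-2\|\xi\|$. Its positive eigenvalues and eigenforms are exactly those of $\pm\curl$ on $\delta\Omega^2$, so $N_\pm(y;\lambda)$ is \emph{exactly} its local positive counting function. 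With this repair your Steps~1 and~2 go through. The $\lambda^2$ coefficient vanishes by the scaling argument, since there are no invariants of order one (cf.\ Appendix~\ref{Scaling the Riemannian manifold}); the `odd in $\xi$' heuristic is not needed. The nonfocal step is the argument of \cite[Theorem~4.4.9]{SaVa} combined with \cite[Theorem~B.5.1]{SaVa}, exactly as in the paper. The resulting route still differs from the paper's. The paper does not trade $\curl$ for an elliptic surrogate. It constructs the propagators $U_\pm(t)$ of $\curl$ itself through the unitary operator $V(t)=U_+(t)+U_-(t)+U_0$, and uses the elliptic extended curl only to justify that construction. From this it reads off $(N_\pm'*\mu)(y;\lambda)=\frac{1}{2\pi^2}\lambda^2+O(1)$ uniformly in $y$. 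The paper's route is heavier but yields the full mollified expansion and the higher Weyl coefficients. Your repaired argument is shorter for this theorem alone, at the cost of relying on the two-term theory for first-order elliptic systems.
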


\begin{theorem}
\label{Theorem main result 3}
Suppose the nonperiodicity condition is satisfied. Then the global counting functions admit the refined asymptotic expansion
\begin{equation}
\label{Theorem main result 3 equation 1}
N_\pm(\lambda)=\frac{\operatorname{Vol}M}{6\pi^2}\,\lambda^3 + o(\lambda^2)
 \qquad \text{as}\qquad \lambda \to + \infty\,.
\end{equation}
\end{theorem}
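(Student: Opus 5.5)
We follow the Duistermaat--Guillemin / Safarov--Vassiliev route, applied separately to the positive and negative parts of $\curl$. The first step is to reduce $\curl$ to operators to which the standard theory applies. Using the pseudodifferential projections $P_\pm$ from \cite{part1,curl}, which project onto the positive and negative spectral subspaces of $\curl$ modulo smoothing, the global counting function becomes
\[
N_\pm(\lambda)=\#\{\text{eigenvalues of } \pm P_\pm\curl P_\pm \text{ in } (0,\lambda)\}.
\]
This operator is a first-order pseudodifferential operator on the range of $P_\pm$. Its principal symbol is $\|\xi\|$ times a rank-one projection, so on that range it behaves like a scalar operator with Hamiltonian $h^{(+)}(x,\xi)=\|\xi\|$. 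Concretely, we use the propagators $U_\pm(t)=e^{-it\curl}P_\pm$, constructed as Fourier integral operators whose phase is generated by the geodesic flow $(x^{(+)},\xi^{(+)})$ (respectively its time reversal for the negative part).

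Next we compute the global trace $\operatorname{Tr}U_\pm(t)$ near $t=0$. Its singularity there is given by the small-time expansion of the propagator, and we only need the first two terms. The leading term yields $\frac{\operatorname{Vol}M}{6\pi^2}\lambda^3$. The coefficient of $\lambda^2$ vanishes, as it does in Theorem~\ref{theorem 1}, by the computations of \cite{curl}: the subprincipal symbol of $\curl$ restricted to the range of $P_\pm$ integrates to zero in the relevant sense. We take the Weyl coefficients obtained elsewhere in the paper for granted; only the statement that the second coefficient is zero is used here.

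The $o(\lambda^2)$ remainder then follows from the standard two-term Tauberian theorem of \cite[Ch.~1--3]{SaVa}, in the version for first-order operators with a projection-valued principal symbol. We mollify $N_\pm$ with $\hat\rho(t/T)$ for arbitrary $T$, and it suffices to show two things. First, the contribution of singularities of $\operatorname{Tr}U_\pm(t)$ at $t\neq 0$ is $o(\lambda^2)$ after averaging. Second, the Tauberian remainder is $O(\lambda^2/T)$. Letting $T\to\infty$ then gives $o(\lambda^2)$.

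The contribution at $t\ne0$ is controlled by the measure of the set of starting points of $T$-periodic trajectories of the Hamiltonian flow, including trajectories of the transported amplitude. Since the flow is the geodesic flow for both $P_+$ and $P_-$, the nonperiodicity condition (Definition~\ref{Definition 2 of nonperiodicity}) gives exactly the vanishing required. This is \cite[Theorem~1.8.14]{SaVa} adapted to systems.

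The main obstacle is justifying that this scalar machinery applies. $\curl$ is not elliptic, so its eigenforms must lie in $\delta\Omega^2$, and $P_\pm$ remove the infinite-dimensional kernel $\dr\Omega^0$ only modulo smoothing errors. We will check two points. First, the smoothing corrections change the counting functions by $O(1)$. Second, the Tauberian estimate $N_\pm(\lambda+1)-N_\pm(\lambda)=O(\lambda^2)$ holds uniformly; this follows from Theorem~\ref{theorem 1}. After that, the argument is the classical one, and the $o(\lambda^2)$ term comes from the absence of a measurable set of absolutely periodic geodesics.
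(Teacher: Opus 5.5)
Your proposal is correct and is essentially the paper's own argument. The paper also uses the oscillatory-integral representation of $U_\pm(t)$ along the geodesic flow to get $(N_\pm'*\mu)(\lambda)=\frac{\operatorname{Vol}M}{2\pi^2}\lambda^2+O(1)$ (this is where $c_1^\pm=0$ enters), shows $(N_\pm'*\gamma)(\lambda)=o(\lambda^2)$ for every $\widehat\gamma\in C_0^\infty((0,+\infty))$ under the nonperiodicity condition as in \cite[Theorem~4.4.1]{SaVa}, and concludes with \cite[Theorem~B.5.1]{SaVa}, which is the same mechanism as your $\hat\rho(t/T)$, $T\to\infty$ version.

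One step should be dropped rather than checked: the claim that ``smoothing corrections change the counting functions by $O(1)$''. It is unnecessary, because $P_\pm$ are the exact spectral projections of $\curl$ (pseudodifferential by \cite{curl}), and smoothing errors only need to be controlled in mollified quantities, where they are $O(\lambda^{-\infty})$. It is also false for general smoothing perturbations. The genuine non-ellipticity issue is proving $U_\pm=V_\pm \bmod \Psi^{-\infty}$, which the paper settles via extended curl in Section~\ref{Justification of the algorithm}.
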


It is known that refining asymptotic formulae
\eqref{Theorem main result 2 equation 1}
and
\eqref{Theorem main result 3 equation 1}
beyond $o(\lambda^2)$ is unfeasible due to obstacles of number-theoretic nature.
This leads us to the study of mollified counting functions.
It is more convenient to start addressing the mollification issue by examining first the mollified version not of the counting functions
$N_\pm(y;\lambda)$ and $N_\pm(\lambda)$ themselves,
but their derivatives
$N_\pm'(y;\lambda)$ and $N_\pm'(\lambda)$
with respect to $\lambda$. See also Remark~\ref{rmk_full_asymptotic_expansion}.

Let $\widehat{\mu}:\mathbb{R}\to \mathbb{R}$ be a smooth even function such that $\widehat{\mu}= 1$ in some neighbourhood of the origin and 
$\operatorname{supp} \widehat{\mu}\subset(-T_0,T_0)$,
where $T_0$ is the infimum of lengths of all the geodesic loops originating from all the points of the manifold. Let $\mu$ be the inverse Fourier transform of $\widehat{\mu}$.
For the Fourier transform and inverse Fourier transform we adopt the convention
\cite[formulae~(1.15)~and~(1.16)]{dirac}.
We have
\begin{equation}
\label{local counting function mollified}
(N_\pm'*\mu)(y;\lambda)
=
c^\pm_2(y)\,\lambda^2
+
c^\pm_1(y)\,\lambda
+
c^\pm_0(y)
+
c^\pm_{-1}(y)\,\lambda^{-1}
+\ldots,
\end{equation}
\begin{equation}
\label{global counting function mollified}
(N_\pm'*\mu)(\lambda)
=
c^\pm_2\,\lambda^2
+
c^\pm_1\,\lambda
+
c^\pm_0
+
c^\pm_{-1}\,\lambda^{-1}
+\ldots,
\end{equation}
as $\lambda\to+\infty$. Here the star stands for convolution in the variable $\lambda$.
We call the coefficients $c^\pm_k$ appearing in
\eqref{local counting function mollified}
and
\eqref{global counting function mollified}
\emph{Weyl coefficients}, local and global respectively.
Of course, the local and global Weyl coefficients are related by the identity
\begin{equation}
\label{relation between local and global Weyl coefficients}
c^\pm_k=\int_M c^\pm_k(y)\,\rho(y)\,\dr y\,.
\end{equation}
It is known \cite{baer_curl} that
$c^\pm_2=\frac{\operatorname{Vol}M}{2\pi^2}\,$.

\begin{remark}
\label{rmk_full_asymptotic_expansion}
The classical literature \cite{DuGu,Ivr80,Ivr84,Ivr98,SaVa} justifying the existence of full asymptotic expansions of the form \eqref{local counting function mollified} and \eqref{global counting function mollified} does not appropriately cover the case of $\curl$, the latter not being elliptic. However, the required result follows \emph{a posteriori} from our construction of the propagator for $\curl$ presented in Sections~\ref{The algorithm} and~\ref{Justification of the algorithm}.
\end{remark}

Weyl coefficients enjoy the following remarkable symmetry property.

\begin{theorem}
\label{theorem symmetries of Weyl coefficients}
We have
\begin{equation}
\label{Symmetries of Weyl coefficients equation}
c_k^+(y)
=
(-1)^k\,
c_k^-(y)
\end{equation}
for all $k=2,1,0,-1,-2,\dots\,$.
\end{theorem}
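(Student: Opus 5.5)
The idea is to exploit orientation reversal. Reversing the orientation of $M$ changes the sign of the Hodge star, so $\curl\mapsto-\curl$. The positive spectrum of one becomes the negative spectrum of the other, with the same eigenforms and hence the same local densities $*(u_j\wedge*u_j)$. Therefore $N_\pm(y;\lambda)$ for $(M,g,\text{orientation})$ equals $N_\mp(y;\lambda)$ for $(M,g,-\text{orientation})$, and likewise for the mollified derivatives. Consequently
\[
c_k^+(y)[g,\mathrm{or}]=c_k^-(y)[g,-\mathrm{or}].
\]
The task is thus reduced to showing that $c_k^\pm(y)$ changes by the factor $(-1)^k$ under reversal of orientation, with the metric fixed.

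To establish this parity, I will use the structure of the coefficients produced by the propagator construction of Sections~\ref{The algorithm} and~\ref{Justification of the algorithm}. That construction builds $e^{-it\curl}$ restricted to the positive and negative subspaces, via pseudodifferential projections $P_\pm$, as Fourier integral operators. The coefficient $c_k^\pm(y)$ is then read off from the singularity at $t=0$ of the trace of the propagator at $y$. It is a universal polynomial expression in the metric, its derivatives and the tensor $E_{\alpha\beta\gamma}$, evaluated at $y$, and it is obtained by integrating homogeneous symbol components over the unit cosphere $S^*_yM$. The only orientation-dependent ingredient is $E$, which appears through $\curl$ itself, whose principal symbol is $-iE_{\alpha}{}^{\beta\gamma}\xi_\beta$. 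Each occurrence of $E$ carries one covector factor, and any two occurrences contract to a metric expression.

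The key step is a homogeneity and parity count. Every term in $c_k^\pm$ is a monomial in curvature, its covariant derivatives and $E$, of total scaling weight $k$ relative to $\lambda^2$. Under $g\mapsto a^2 g$ the coefficient $c_k$ scales like $a^{k-2}$ times the density. Equivalently, consider the substitution $\xi\mapsto-\xi$ in the cosphere integral, which preserves the measure on $S^*_yM$. Under this substitution each factor of the symbol of order $-j$ in the expansion is odd or even according to how many $\xi$ factors it carries. I will track the parity of $E$ together with the number of $\xi$ factors: each $E$ arrives paired with one $\xi$, while derivatives (that is, curvature) arrive paired with $\partial_\xi$. This gives the combined rule that the number of $E$ factors has the parity of $k$ modulo 2, up to terms that vanish after the odd-in-$\xi$ integration. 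Reversing orientation replaces $E$ by $-E$, so every surviving monomial in $c_k^\pm(y)$ acquires the sign $(-1)^k$, and combining this with the reduction in the first step gives the theorem.

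The main obstacle is making this parity bookkeeping rigorous through the projection construction: $P_\pm$ involves $E$ both at the principal level ($\tfrac12(\mathrm{Id}\pm\curl/\sqrt{\delta\dr})$) and in its lower-order terms. An alternative route that I would try first is more robust. Consider instead the combination $e^{-it\curl}$ on $\delta\Omega^2$ and observe that $N_+'(\lambda)\pm N_-'(\lambda)$ are the even and odd parts, in $\lambda$, of the spectral density of $\curl$ on all of $\mathbb{R}$. The even part depends only on $\curl^2=\delta\dr$, which is orientation independent. The odd part is linear in $E$ times even quantities. Using the standard fact that the mollified density of a classical operator admits an expansion in $|\lambda|^{k}$ with coefficients that are polynomial in the symbol, the sign $(-1)^k$ then drops out by comparing the expansions at $\lambda\to+\infty$ and $\lambda\to-\infty$.
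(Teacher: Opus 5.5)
Your first step (reversing orientation flips the Hodge star, so $\curl\mapsto-\curl$, the densities $*(u_j\wedge *u_j)$ are unchanged, and $N_+\leftrightarrow N_-$) matches the paper. So does your use of the scaling $g\mapsto\omega^2g$; these are the ingredients of Appendix~\ref{appendix symmetry of Weyl coefficients} together with Lemma~\ref{lemma linear combination}.

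The gap is the decisive parity claim: that the number of $E$ factors in $c_k^\pm(y)$ is congruent to $k$ modulo $2$. Your $\xi\mapsto-\xi$ bookkeeping is asserted, not proved. The pairing rule you state also does not cover all the ways curvature enters the construction. Metric derivatives arise from Taylor expansion along $x^{(\pm)}(t;0,\eta)=\pm t\eta/|\eta|$, in pairs from the $t\,\partial_x\partial_x$ part of $\mathfrak S^{(\pm)}_{-1}$, and from parallel transport. Moreover, $\partial_\eta$ acting on $|\eta|$ creates an $\eta$ rather than removing one. The parity conclusion does survive each of these, but you would have to check this through the transport equations and the projections, as you concede.

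Your fallback route is circular. No general fact relates the expansions of the mollified density at $\lambda\to+\infty$ and $\lambda\to-\infty$: for $\sqrt{-\Delta}$ the density is $O(|\lambda|^{-\infty})$ at $-\infty$ but not at $+\infty$. Knowing that the odd part is orientation-odd only says that $c_k^+-c_k^-$ is a pseudoscalar, which is your first step again. It does not show that $c_k^+-c_k^-$ vanishes for even $k$, nor that $c_k^++c_k^-$ vanishes for odd $k$.

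The missing idea is to do the parity count on the final invariants rather than on the integrand. By scaling, $c_k^\pm(y)$ is a universal linear combination of complete contractions of $g$, $E$ and $\nabla^q\Riem$ of order $2r+q=2-k$. Here $E$ carries three indices, each curvature factor an even number, and each $\nabla$ one. A complete contraction therefore forces the number of $E$'s to satisfy $\#E\equiv q\equiv 2-k\equiv k \pmod 2$, and pairs of $E$'s reduce to metric contractions. Hence $c_k^\pm$ is a scalar for $k$ even and a pseudoscalar for $k$ odd, so it acquires the factor $(-1)^k$ under orientation reversal. Combined with your first step this gives $c_k^+=(-1)^kc_k^-$. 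This is exactly the paper's argument, and it requires no parity tracking through the propagator construction.
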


The statement of Theorem~\ref{theorem symmetries of Weyl coefficients} is part of the folklore in the subject, but we were unable to identify a rigorous proof for it in the literature, especially for $\curl$. Hence, for the sake of completeness and the benefit of the reader, we provide a proof in Appendix~\ref{appendix symmetry of Weyl coefficients}.

\

Our last main result is the following theorem, providing explicit formulae for the first six Weyl coefficients of $\curl$.

\begin{theorem}
\label{Theorem main result}
We have
\begin{align}
\label{Theorem main result equation 0}
c_2^\pm(y)
&=
\frac{1}{2\pi^2}\,,
\\[.5em]
\label{Theorem main result equation 1}
c_1^\pm(y)
&=
0\,,
\\[.5em]
\label{Theorem main result equation 2}
c_0^\pm(y)
&=
-\frac{1}{12\pi^2}\,\Sc(y)\,,
\\[.5em]
\label{Theorem main result equation 3}
c_{-1}^\pm(y)
&=
0\,,
\\[.5em]
\label{Theorem main result equation 4}
c_{-2}^\pm(y)
&=
-
\frac{1}{80\pi^2}
\|\oRic(y)\|^2
+
C\,
(\Delta\Sc)(y)\,,
\\[.5em]
\label{Theorem main result equation 5}
c_{-3}^\pm(y)
&=
\mp\,
\frac{1}{120\pi^2}\,
E^{\alpha\beta\gamma}(y)\,\oRic_{\alpha\mu}(y)\,\nabla_\beta\,\oRic_\gamma{}^\mu(y)\,,
\end{align}
where $C$ is some universal constant, the same for $c_{-2}^+$ and $c_{-2}^-$, and
\begin{equation}
\label{trace-free ricci squared}
\|\oRic(y)\|^2:=\oRic_{\alpha\beta}(y)\,
\oRic^{\alpha\beta}(y)\,.
\end{equation}
\end{theorem}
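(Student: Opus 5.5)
Theorem~\ref{Theorem main result} is proved by reducing the computation of the Weyl coefficients of $\curl$ to those of classical elliptic operators plus an explicitly computable asymmetric correction. The starting point is the decomposition of the space of coclosed 1-forms. Let $P_\pm$ be the pseudodifferential projections onto the positive and negative spectral subspaces of $\curl$, whose existence and symbols are known from \cite{part1,curl}. Then $N_\pm'*\mu$ is expressed through the traces of $P_\pm\,\mu(\lambda\mp\curl)$, that is, through the mollified spectral function of $\curl$ restricted to the ranges of $P_\pm$. Write the even and odd parts as
\[
N_+ + N_- \qquad\text{and}\qquad N_+-N_-.
\]
The even part is governed by $|\curl|$ on $\delta\Omega^2$. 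Since $\curl^2=\delta\dr$ on coclosed 1-forms, the even combination is determined by the heat or wave invariants of the Hodge Laplacian on 1-forms minus those on exact 1-forms, which are isospectral to the scalar Laplacian on nonconstant functions. Converting the known local heat coefficients $a_0,a_1,a_2$ of the Hodge Laplacian on $\Omega^1$ and $\Omega^0$ (Gilkey's formulae) into Weyl coefficients of the square root gives $c_2^\pm$, $c_0^\pm$ and $c_{-2}^\pm$ up to divergence terms.

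At the local level, terms like $\Delta\Sc$ arise whose coefficient depends on the splitting between $\dr\delta$ and $\delta\dr$ pieces. This is presumably why only the form $C\,\Delta\Sc$ is asserted. We will check that the remaining universal combination collapses, via $\|\Riem\|^2=4\|\Ric\|^2-\Sc^2$ in dimension $3$, to $-\frac{1}{80\pi^2}\|\oRic\|^2$ plus a multiple of $\Delta\Sc$. Indeed, the $\Sc^2$ terms must cancel in the combination coming from $1$-forms minus $0$-forms.

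Theorem~\ref{theorem symmetries of Weyl coefficients} does much of the remaining work. It gives $c_k^+=c_k^-$ for even $k$, so the even part alone determines $c_2^\pm$, $c_0^\pm$ and $c_{-2}^\pm$. It also gives $c_k^+=-c_k^-$ for odd $k$, so the odd coefficients are determined by $N_+-N_-$, i.e.\ by the local eta-type invariants of $\curl$. For $c_1$ and $c_{-1}$ we argue by invariance theory. An odd coefficient must be a local scalar invariant that is odd under orientation reversal, hence linear in $E$. By weight counting, $c_1$ has weight $0$ and $c_{-1}$ has weight $2$ (one curvature, or two derivatives). There is no pseudoscalar of weight $\le2$ built from $E$, $g$ and curvature: $E^{\alpha\beta\gamma}\Ric_{\alpha\beta}=0$ by symmetry. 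Hence $c_1^\pm=c_{-1}^\pm=0$.

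For $c_{-3}^\pm$ (weight $4$), the same invariance argument first restricts the answer. Pseudoscalars of weight $4$ linear in $E$ are spanned, modulo divergences and identities, by $E^{\alpha\beta\gamma}\oRic_{\alpha\mu}\nabla_\beta\oRic_\gamma{}^\mu$; terms with $\Sc$ vanish or are divergences, e.g.\ $E^{\alpha\beta\gamma}\nabla_\alpha\nabla_\beta\Ric_{\gamma\mu}$. Possible divergence contributions must also be ruled out locally. We will argue that no pseudoscalar of weight $4$ of the form $\nabla_\alpha(\cdots)$ linear in $E$ survives, because the candidates reduce to contractions of $E$ with symmetric tensors or to $\nabla\nabla$ commutators.

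This leaves a single unknown constant multiplying $E^{\alpha\beta\gamma}\oRic_{\alpha\mu}\nabla_\beta\oRic_\gamma{}^\mu$. It is fixed by one of two routes. The first is to compute the subprincipal/lower-order symbol of $P_+-P_-$ to the required order using the algorithm for the propagator $e^{-it\curl}$, evaluated at a point in normal coordinates. The second, simpler route is to match against a test case with known asymmetry, for instance the Berger spheres, whose positive spectrum is explicit \cite{lotay}. There one reads off the $\lambda^{-3}$ coefficient of $N_+'-N_-'$ and divides by the integrated invariant, which is nonzero for a nonround Berger metric since it is homogeneous but not Einstein. Matching the small-deformation expansion fixes $\mp\frac{1}{120\pi^2}$.

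The main obstacle is this last step. Extracting the $\lambda^{-3}$ term reliably requires either the fifth-order symbolic calculus for the propagator (long) or precise spectral data including the negative spectrum on the Berger sphere. I would attempt the direct symbolic computation first, using normal coordinates to kill most terms, and use the Berger sphere as a check.
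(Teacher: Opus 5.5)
Your route to the even coefficients is genuinely different from the paper's, and it works. The paper obtains $c_2^\pm$, $c_1^\pm$ and $c_0^\pm$ by running its propagator algorithm in geodesic normal coordinates; for $c_0^\pm$ it assumes curvature at the origin is purely scalar. It then gets $c_{-2}^\pm$ from invariance theory plus two test cases: the round sphere eliminates $\Sc^2$, and the Berger sphere fixes the coefficient of $\|\oRic\|^2$.

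Your reduction via $\curl^2=\delta\dr$ on coexact forms, to $\Delta_1=\nabla^*\nabla+\Ric$ minus $\Delta_0$, reproduces the stated constants.
\begin{itemize}
\item The coexact $t^{-1/2}$ heat density is $(4\pi)^{-3/2}(-\tfrac12-\tfrac16)\Sc$, which is $-4$ times the scalar one. This gives $c_0^\pm=-2c_0=-\Sc/(12\pi^2)$.
\item At the next order, after using $\|\Riem\|^2=4\|\Ric\|^2-\Sc^2$ and working modulo $\Delta\Sc$, the trace of Gilkey's integrand is $-21\Sc^2+78\|\Ric\|^2$ for 1-forms and $3\Sc^2+6\|\Ric\|^2$ for functions. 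The coexact $t^{1/2}$ density is therefore $(4\pi)^{-3/2}\tfrac{72}{360}\|\oRic\|^2$.
\item The Mellin transform turns a $t^{1/2}$ coefficient $(4\pi)^{-3/2}A$ into $c_{-2}^++c_{-2}^-=-A/(8\pi^2)$. This gives $-\tfrac{1}{40\pi^2}\|\oRic\|^2$, i.e.\ $-\tfrac{1}{80\pi^2}\|\oRic\|^2$ each.
\end{itemize}
The local mismatch between exact 1-forms and functions is $|\dr f_j|^2/\mu_j=f_j^2+\tfrac{1}{2\mu_j}\Delta(f_j^2)$. At the orders of $c_2$ and $c_0$ it is $\Delta$ applied to a constant, so it drops out. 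At the order of $c_{-2}$ it contributes only a multiple of $\Delta\Sc$, so tracking it would in principle determine $C$, which the paper leaves open. What your route does not do is exercise the propagator construction, which is the paper's main tool and treats even and odd coefficients in one framework.

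For the odd coefficients you follow the paper's strategy: invariance theory plus the symmetry theorem reduce $c_{-3}^\pm$ to $\pm C\,E^{\alpha\beta\gamma}\oRic_{\alpha\mu}\nabla_\beta\oRic_\gamma{}^\mu$, and $C$ is then fixed on the Berger sphere. However, you never carry out that matching. The value $\mp\frac{1}{120\pi^2}$ is simply asserted, and it is the only asymmetric coefficient in the theorem. The paper does this at fixed $a$, with no small-deformation expansion, by an Euler--Maclaurin analysis of the family with the radical. It finds the $\lambda^{-3}$ coefficient $\mp\frac{4a(a^2-1)^2}{15\pi^2}$ and compares it with the invariant's value $32a(a^2-1)^2$. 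Since $c_{-3}^-=-c_{-3}^+$, the positive spectrum alone suffices; you do not need the negative one.

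There are also some smaller slips.
\begin{itemize}
\item Under $g\mapsto\omega^2g$ the coefficient $c_k$ has order $2-k$. So $c_1$, $c_{-1}$, $c_{-3}$ have orders $1,3,5$, not $0,2,4$.
\item Your test expressions $E^{\alpha\beta\gamma}\Ric_{\alpha\beta}$ and $E^{\alpha\beta\gamma}\nabla_\alpha\nabla_\beta\Ric_{\gamma\mu}$ carry free indices, so they are not pseudoscalars.
\item The correct argument is: a pseudoscalar needs an odd number of $\nabla$'s, and the only order-3 candidate, $E^{\alpha\beta\gamma}\nabla_\alpha\Ric_{\beta\gamma}$, vanishes. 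The order-5 space is one-dimensional and spanned by a non-divergence, since its integral over a non-round Berger sphere is nonzero. So there are no divergence terms to rule out.
\item ``Homogeneous but not Einstein'' does not by itself make the invariant nonzero; the explicit value $32a(a^2-1)^2$ does.
\end{itemize}
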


\begin{remark}
Let us emphasise that spectral asymmetry manifests itself in the asymptotics only at the level of the \emph{sixth} Weyl coefficients, see~\eqref{Theorem main result equation 5}. For this reason, it is in general easier to detect spectral asymmetry by examining low, as opposed to high, eigenvalues.
\end{remark}

\begin{remark}
In the current paper we do not determine the universal constant $C$ appearing in \eqref{Theorem main result equation 4}. However, we should like to point out that its precise value is irrelevant when it comes to computing \emph{global} Weyl coefficients, because the term $C\, (\Delta \Sc)(y)$ integrates to zero upon substitution into~\eqref{relation between local and global Weyl coefficients}.
\end{remark}

Integrating
\eqref{local counting function mollified}
and
\eqref{global counting function mollified}
from $-\infty$ to a given positive $\lambda$ with account of \eqref{Theorem main result equation 3}, we arrive at
\begin{corollary}
The mollified local and global counting functions admit the asymptotic expansions
\begin{equation}
\label{local counting function integrated}
(N_\pm*\mu)(y;\lambda)
=
\frac13 c^\pm_2(y)\,\lambda^3
+
\frac12 c^\pm_1(y)\,\lambda^2
+
c^\pm_0(y)\,\lambda
+
b^\pm(y)
-
c^\pm_{-2}(y)\,\lambda^{-1}
-
\frac12c^\pm_{-3}(y)\,\lambda^{-2}
+\ldots,
\end{equation}
\begin{equation}
\label{global counting function integrated}
(N_\pm'*\mu)(\lambda)
=
\frac13 c^\pm_2\,\lambda^3
+
\frac12 c^\pm_1\,\lambda^2
+
c^\pm_0\,\lambda
+
b^\pm
-
c^\pm_{-2}\,\lambda^{-1}
-
\frac12c^\pm_{-3}\,\lambda^{-2}
+\ldots,
\end{equation}
as $\lambda\to+\infty$.
\end{corollary}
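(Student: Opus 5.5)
This corollary follows by integrating the expansions \eqref{local counting function mollified} and \eqref{global counting function mollified} term by term in $\lambda$. The plan is to handle separately the polynomial part, the $\lambda^{-1}$ term, and the tail.

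\textbf{Step 1: the convolution and its derivative.} Since $N_\pm(y;\lambda)=0$ for $\lambda\le0$ and $N_\pm$ has polynomial growth, while $\mu$ is Schwartz, the convolution $N_\pm*\mu$ is smooth. Differentiation commutes with convolution, so $(N_\pm*\mu)'=N_\pm'*\mu$, where $N_\pm'$ is understood as a sum of delta functions. Moreover, $(N_\pm*\mu)(y;\lambda)\to0$ rapidly as $\lambda\to-\infty$, because $\mu$ decays rapidly. Hence
\[
(N_\pm*\mu)(y;\lambda)=\int_{-\infty}^{\lambda}(N_\pm'*\mu)(y;s)\,\dr s .
\]

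\textbf{Step 2: splitting off the expanded terms.} Fix $\lambda_0>0$ and an integer $K\ge 3$. Write
\[
(N_\pm'*\mu)(y;s)=\sum_{k=-K}^{2}c_k^\pm(y)\,s^k+R_K(y;s)
\qquad\text{for } s\ge\lambda_0,
\]
with $R_K(y;s)=O(s^{-K-1})$. The monomials $s^2$, $s$ and $1$ integrate to $\tfrac13 c_2^\pm\lambda^3+\tfrac12 c_1^\pm\lambda^2+c_0^\pm\lambda$, up to $\lambda$-independent constants. For $k\le-2$,
\[
\int^\lambda s^k\,\dr s=\frac{\lambda^{k+1}}{k+1}+\text{const},
\]
which gives $-c_{-2}^\pm\lambda^{-1}$ from $k=-2$ and $-\tfrac12 c_{-3}^\pm\lambda^{-2}$ from $k=-3$. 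The remainder satisfies
\[
\int_{\lambda_0}^{\lambda}R_K\,\dr s=\int_{\lambda_0}^{\infty}R_K\,\dr s-\int_{\lambda}^{\infty}R_K\,\dr s ,
\]
and the last integral is $O(\lambda^{-K})$.

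\textbf{Step 3: the $\lambda^{-1}$ term and the constant.} The one place that needs care is the term $c_{-1}^\pm s^{-1}$, which would integrate to $c_{-1}^\pm\log\lambda$. By \eqref{Theorem main result equation 3} we have $c_{-1}^\pm=0$, so no logarithm appears. All the $\lambda$-independent contributions collect into a single constant $b^\pm(y)$. These are the integral over $(-\infty,\lambda_0)$, the constants of integration of the monomials, and $\int_{\lambda_0}^\infty R_K$; they are finite by the rapid decay and polynomial bounds of Step 1. The constant $b^\pm(y)$ is not local in the symbol: it is related to the eta invariant, and we do not attempt to compute it.

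\textbf{Step 4: uniformity and the global version.} Uniformity in $y$ follows from the uniformity of the remainder in \eqref{local counting function mollified}. We take that uniformity from the propagator construction referred to in Remark~\ref{rmk_full_asymptotic_expansion}, as an assumed input rather than something proved here. The global formula then follows by integrating over $M$ against $\rho$, using \eqref{relation between local and global Weyl coefficients}, with $b^\pm=\int_M b^\pm(y)\rho(y)\,\dr y$. Note that the left-hand side of \eqref{global counting function integrated} should read $(N_\pm*\mu)(\lambda)$; the prime there appears to be a typo.
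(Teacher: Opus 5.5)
Your proposal is correct and follows the paper's own one-line argument: integrate the mollified expansions of $N_\pm'*\mu$ term by term from $-\infty$ to $\lambda$, with $c^\pm_{-1}=0$ ruling out a $\log\lambda$ term and all $\lambda$-independent contributions absorbed into $b^\pm$; your remainder bookkeeping simply makes this explicit. You are also right that the left-hand side of the global formula should read $(N_\pm*\mu)(\lambda)$ rather than $(N_\pm'*\mu)(\lambda)$.
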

\begin{remark}
The functions $b^\pm(y)$ and their integrals $b^\pm$ in \eqref{local counting function integrated} and~\eqref{global counting function integrated}, respectively, are due to contributions from small eigenvalues, and are not determined by means of microlocal techniques.
\end{remark}

There are many reasons why one may be interested in examining higher order Weyl coefficients. An important motivation is offered by their relation with the eta function. Let us make this precise.

We define the \emph{local} and \emph{global} eta functions of $\curl$ as
\begin{equation}
\label{local eta function}
\eta(y;s):=\sum_{k\in \mathbb{Z}\setminus\{0\}} \frac{\operatorname{sgn}\lambda_k}{|\lambda_k|^s}\, {*\!}\left(u_k(y)\wedge *u_k(y)\right), \qquad y\in M, \,s\in \mathbb{C}\,,
\end{equation}
and
\begin{equation}
\label{global eta function}
\eta(s):=\sum_{k\in \mathbb{Z}\setminus\{0\}} \frac{\operatorname{sgn}\lambda_k}{|\lambda_k|^s}\,,\qquad s\in \mathbb{C}\,,
\end{equation}
respectively. Of course, \eqref{global eta function} is obtained by integrating~\eqref{local eta function} over $M$. Both \eqref{global eta function} and \eqref{local eta function} converge absolutely for $\operatorname{Re}s>3$, and can be continued meromorphically to the whole complex plane, potentially with simple poles at integers smaller than or equal to $3$.
One can show \cite[Theorem~3.2.2]{fang_PhD} that the residues of the local eta function are given by
\begin{equation}
\label{residues related to Weyl coefficients}
\mathrm{Res}(\eta(y;\,\cdot\,),n)
=c_{n-1}^+(y)-c_{n-1}^-(y)\,,
\qquad
n=3,2,1,0,-1,-2,\ldots.
\end{equation}
In particular, Theorem~\ref{Theorem main result} implies 
\begin{equation*}
\mathrm{Res}(\eta(y;\,\cdot\,),n)
=0\,,
\qquad\text{for}\qquad
n=3,2,1,0,-1,
\end{equation*}
and
\begin{equation}
	\label{local eta residue -2}
	\mathrm{Res}(\eta(y;\,\cdot\,),-2) = - \frac{1}{60\pi^2}\,E_\alpha{}^{\beta\gamma}(y)\,\oRic^{\alpha\mu}(y)\,\nabla_\beta\,\oRic_{\gamma\mu}(y)\,.
	\end{equation}
That is, the first (potential) pole of the eta function of curl occurs at $s=-2$. Also observe that Theorem~\ref{theorem symmetries of Weyl coefficients} and formula~\eqref{residues related to Weyl coefficients} tell us that subsequent poles can only occur at negative \emph{even} integer values of $s$.

All in all, we have the following
\begin{corollary}
The eta functions of $\curl$, local and global, are holomorphic in the half-plane $\operatorname{Re}s>-2$.
\end{corollary}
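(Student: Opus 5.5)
The corollary will follow directly from the residue formula \eqref{residues related to Weyl coefficients}, Theorem~\ref{Theorem main result} and Theorem~\ref{theorem symmetries of Weyl coefficients}, once we know the structure of the meromorphic continuation. We take as given, as stated above, that $\eta(y;s)$ and $\eta(s)$ continue meromorphically to $\mathbb{C}$ with at most simple poles, located at integers $n\le 3$. Away from these integers the continuation is holomorphic, so in the half-plane $\operatorname{Re}s>-2$ the only candidate poles are $s=3,2,1,0,-1$. It therefore suffices to show that the residue vanishes at each of these five points, for every $y$.

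\textbf{Local case.} By \eqref{residues related to Weyl coefficients}, the residue at $s=n$ equals $c^+_{n-1}(y)-c^-_{n-1}(y)$. For $n=3,2,1,0,-1$ this involves the coefficients $c_2,c_1,c_0,c_{-1},c_{-2}$. Formulae \eqref{Theorem main result equation 0}--\eqref{Theorem main result equation 4} give $c^+_k(y)=c^-_k(y)$ for each of these $k$. For $k=-2$ we need the fact, stated in Theorem~\ref{Theorem main result}, that the undetermined constant $C$ is the same for $+$ and $-$. Alternatively, this follows without any explicit formula from Theorem~\ref{theorem symmetries of Weyl coefficients}: for even $k$ one has $c^+_k=c^-_k$, and for odd $k$ both coefficients vanish here ($c_{\pm1}$ by the explicit formulae). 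Hence all five residues vanish, simple poles with zero residue are removable, and $\eta(y;\cdot)$ is holomorphic for $\operatorname{Re}s>-2$.

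\textbf{Global case.} We integrate over $M$ against $\rho$, using \eqref{relation between local and global Weyl coefficients}. The global residues are the integrals of the local residues, so they also vanish, and $\eta(s)$ is holomorphic in the same half-plane.

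\textbf{Main obstacle.} The only real subtlety is justifying that the continuation has at most \emph{simple} poles at integers, uniformly in $y$, so that a zero residue means no singularity at all. The poles come from integrating the term-by-term expansion \eqref{local counting function mollified} against $\lambda^{-s}$. Each power $\lambda^{k}$ produces the factor $1/(s-k-1)$, while the remainder and the small-eigenvalue contributions are holomorphic in the relevant strip. No logarithmic terms appear, because the expansion contains pure powers only. Its existence and uniformity are ensured a posteriori by the propagator construction referred to in Remark~\ref{rmk_full_asymptotic_expansion}. I would cite \cite[Theorem~3.2.2]{fang_PhD} for this step.
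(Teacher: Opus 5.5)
Your proposal is correct and follows essentially the paper's own route: the paper likewise combines the residue formula \eqref{residues related to Weyl coefficients} (taken from Fang's thesis) with the explicit coefficients of Theorem~\ref{Theorem main result} to conclude that the residues at $s=3,2,1,0,-1$ vanish, and obtains the global statement by integrating over $M$. Your alternative argument via Theorem~\ref{theorem symmetries of Weyl coefficients}, using equality of $c_k^\pm$ for even $k$ and vanishing of $c_1^\pm$, $c_{-1}^\pm$ for odd $k$, is the same observation the paper records right after \eqref{local eta residue -2}.
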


Note that in~\cite[Theorem 4.14(iii)]{asymm2} the authors stated, without a proof, that the global eta function of $\curl$ is holomorphic in the half-plane $\operatorname{Re}s>-1/2$.

\subsection*{Structure of the paper}
\addcontentsline{toc}{subsection}{Structure of the paper}

Our paper is structured as follows.

In Section~\ref{The strategy for the proof} we outline, in plain English, the strategy for our main proofs. The latter relies on the construction of the propagator for $\curl$, whose constructive algorithm is set out in detail in Section~\ref{The algorithm}. Section~\ref{Justification of the algorithm} then puts such algorithm on a rigorous footing, providing a mathematical justification for it, whereas Section~\ref{Implementation of the algorithm} introduces some simplifications allowing for its implementation in practice.

Sections~\ref{The first two Weyl coefficients}, \ref{The third Weyl coefficient} and \ref{Higher Weyl coefficients} address the proof of the formulae for the first six Weyl coefficients for $\curl$, whereas Section~\ref{Proof of Theorems on Weyl asymptotics} contains the proof of our three main theorems on refined spectral asymptotics --- Theorems~\ref{theorem 1}, \ref{Theorem main result 2} and~\ref{Theorem main result 3}.
Finally, in Section~\ref{Examples} we present explicit examples which both inform and verify the main results.

The paper is complemented by three appendices providing complementary results and technical materials.

\newpage

\subsection*{List of notation}
\addcontentsline{toc}{subsection}{List of notation}

\begin{longtable}{l l}
\hline
\\ [-1em]
\multicolumn{1}{c}{\textbf{Symbol}} & 
  \multicolumn{1}{c}{\textbf{Description}} \\ \\ [-1em]
 \hline \hline \\ [-1em]
$\sim$ & Asymptotic expansion \\ \\ [-1em]
$\ast$ & Hodge dual \\ \\ [-1em]
$\|\,\cdot\,\|$ & Riemannian norm, \eqref{norm of xi}, \eqref{trace-free ricci squared} \\ \\ [-1em]
$|\,\cdot\,|$ & Euclidean norm \\ \\ [-1em]
$b^{(\pm)}(x,\xi)$ & Eigencovector of $\curl_{\prin}(x,\xi)$ associated to $h^{(\pm)}(x,\xi)$ \\ \\ [-1em]
$b^{(\pm)}_\shortparallel(t;y,\eta)$ & Parallel transport of $b^{(\pm)}(y,\eta)$ along $(x^{(\pm)},\xi^{(\pm)})$, Remark~\ref{gauge transformations now sit at the source} \\ \\ [-1em]
$c_n(y)$ & Local Weyl coefficients of $\sqrt{-\Delta}$ \\ \\ [-1em]
$c^\pm_n(y)$ & Positive and negative local Weyl coefficients of $\curl$ \\ \\ [-1em]
$\curl_E$ & Extended curl, \eqref{definition extended curl}  \\ \\ [-1em]
$\chi^{(\aleph)}$, $\quad\aleph\in\{+,-,0\}$ & Cut-offs, see \eqref{time-dependent oscillatory integrals}--\eqref{time-independent oscillatory integral} \\ \\ [-1em]
$d$ & Dimension of the manifold $M$\\ \\ [-1em]
$\dr$ & Exterior derivative \\ \\ [-1em]
$\delta$ & Codifferential \\ \\ [-1em]
$\Delta:=-\delta\dr$ & (Nonpositive) Laplace--Beltrami operator \\ \\ [-1em]
$\nabla$ & Covariant derivative\\ \\ [-1em]
$\dr\Omega^{k-1}(M)$ & Exact $k$-forms over $M$ \\ \\ [-1em]
$\delta\Omega^{k+1}(M)$ & Coexact $k$-forms over $M$ \\ \\ [-1em]
$E_{\alpha\beta\gamma}$ & Totally antisymmetric tensor \eqref{Levi-Civita tensor} \\ \\ [-1em]
$\varepsilon_{\alpha\beta\gamma}$ & Totally antisymmetric symbol, $\varepsilon_{123}=+1$ \\ \\ [-1em]
$F^{(\aleph)}_j$, $\quad\aleph\in\{+,-,0\}$ & The operator \eqref{operator F j} associated with $\varphi^{(\aleph)}$ \\ \\ [-1em]
$f_{x^\alpha}$ & Partial derivative of a function $f$ with respect to $x^\alpha$ \\ \\ [-1em]
$f^{(\pm)}_{-k}$ & Component of $v^{(\pm;\pm)}_{-k}$ along $b^{(\pm)}(x^{(\pm)},\xi^{(\pm)})$, \eqref{first transport equation ODE plus}, \eqref{first transport equation ODE plus adjustment} \\ \\ [-1em]
$\varphi^{(\pm)}(t,x;y,\eta)$ & Phase functions associated with $h^{(\pm)}$ , see \eqref{phase functions condition 1}--\eqref{phase functions condition 4} \\ \\ [-1em]
$\varphi^{(0)}(x;y,\eta)$ & Phase function at $t=0$, \eqref{two phase functions coincide at time zero} \\ \\ [-1em]
$g$ & Riemannian metric \\ \\ [-1em]
$g_a$ & Berger metric on the $3$-sphere with parameter $a$ \\ \\ [-1em]
$\Gamma^\alpha{}_{\beta\gamma}$ & Christoffel symbols\\ \\ [-1em]
$h^{(\aleph)}(x,\xi)$, $\quad\aleph\in\{+,-,0\}$ & Eigenvalues of $\curl_{\prin}(x,\xi)$, \eqref{eigenvalues principal symbol curl} \\ \\ [-1em]
$\mathcal{H}^k(M)$ & Harmonic $k$-forms over $M$ \\ \\ [-1em]
$\eta(y;s)$ & Local eta function of $\curl$ \\ \\ [-1em]
$\eta(s)$ & Eta function of $\curl$ \\ \\ [-1em]
$\operatorname{I}$ & Identity matrix \\ \\ [-1em]
$\operatorname{Id}$ & Identity operator \\ \\ [-1em]
$\operatorname{id}$ & Full right symbol of the identity operator\\ \\ [-1em]
$\operatorname{id}^{(\beth)}$, $\quad\beth\in\{+,-,0\}$ & Projection of $\operatorname{id}$ along $P^{(\beth)}$, \eqref{projected symbols identity} \\ \\ [-1em]
$L_\alpha^{(\aleph)}$, $\quad\aleph\in\{+,-,0\}$ & The operator \eqref{operator L alpha} associated with $\varphi^{(\aleph)}$ \\ \\ [-1em]
$(\lambda_j, u_j)$, $\quad j=\pm1, \pm2, \ldots$ & Eigensystem for $\curl$, see~\eqref{ev_curl} \\ \\ [-1em]
$\operatorname{mod} \ \Psi^{-\infty}$ & Modulo an integral operator with infinitely smooth kernel,\\ & \quad possibly depending smoothly on $t$ \\ \\ [-1em]
$\mu$ & Mollifier, see~\eqref{local counting function mollified} and~\eqref{global counting function mollified} \\ \\ [-1em]
$(\mu_j, f_j)$, $\quad j=0,1,2,\ldots$ & Eigensystem for $-\Delta$ \\ \\
[-1em]
$N_\pm(\lambda)$ & Positive and negative global counting functions of $\curl$ \eqref{global counting functions for curl} \\ \\ [-1em]
$N_\pm(y;\lambda)$ & Positive and negative local counting functions of $\curl$ \eqref{local counting functions for curl} \\ \\ [-1em]
$N_\mathrm{I}(y;\lambda)$, $N_\mathrm{II}(y;\lambda)$, $N_\mathrm{III}(y;\lambda)$ & Partial local counting functions of $\curl$ on the Berger sphere \\ \\ [-1em]
$\Omega^k(M)$ & Differential $k$-forms over $M$ \\ \\ [-1em]
$P_\pm$ & Projections onto the positive and negative spectrum of $\curl$
\\ \\ [-1em]
$P_0$ & Orthogonal projection onto $\dr\Omega^0(M)$
\\ \\ [-1em]
$P_{\mathcal{H}^1}$ & Orthogonal projection onto $\mathcal{H}^1(M)$ \\ \\ [-1em]
$P^{(\aleph)}$, $\quad\aleph\in\{+,-,0\}$ & Principal symbol of $P_\aleph$, \eqref{projection hermiticity}--\eqref{projection completeness} \\ \\ [-1em]
$P^{(\aleph;\beth)}$, $\quad\aleph,\beth\in\{+,-,0\}$ & Evaluation of $P^{(\aleph)}$ along $(x^{(\pm)},\xi^{(\pm)})$ or at \((y,\eta)\), \eqref{principal symbols of projection operators as functions of time}, \eqref{made this a displayed formula on 20 April 2026} \\ \\ [-1em]
$\Psi^s$ & Classical pseudodifferential operators of order $s$ \\ \\ [-1em]
$\Psi^{-\infty}$ & Infinitely smoothing operators \\ \\ [-1em]
$Q_\prin$ & Principal symbol of the pseudodifferential operator $Q$ \\ \\ [-1em]
$q^{(\pm)}_0$ & The functions \eqref{formula for q 4} \\ \\ [-1em]
$q^{(\pm)}_{0,\shortparallel}$ & The functions \eqref{formula for q 4 parallel} \\ \\ [-1em]
$\Ric$, $\oRic$ & Ricci tensor and trace-free Ricci tensor, \eqref{trace-free Ricci}\\ \\ [-1em]
$\Riem$ & Riemann curvature tensor  \\ \\ [-1em]
$\rho(x)$ & Riemannian density \\ \\ [-1em]
$\Sc$ & Scalar curvature  \\ \\ [-1em]
$S^*M$ & Cosphere bundle  \\ \\ [-1em]
$\mathfrak{S}^{(\aleph)}$, $\quad\aleph\in\{+,-,0\}$ & Amplitude-to-symbol operator associated with $\varphi^{(\aleph)}$, \eqref{22 April 2026 equation 1}--\eqref{S minus k} \\ \\ [-1em]
$U(t)$ & (Total) propagator of $\curl$, \eqref{propagator curl} \\ \\ [-1em]
$U_\pm(t)$ & Positive and negative propagators of $\curl$, \eqref{propagator curl plus}, \eqref{propagator curl minus} \\ \\ [-1em]
$\mathfrak{u}_\pm(t,x,y)$ & Schwartz kernel of $U_\pm(t)$ \\ \\ [-1em]
$\operatorname{Vol} M$, $\operatorname{Vol}_gM$ & Riemannian volume of $(M,g)$ \\ \\ [-1em]
$V(t)$, $V_\aleph$, $\quad\aleph\in\{+,-,0\}$& The auxiliary oscillatory integrals \eqref{auxiliary operator}, \eqref{time-dependent oscillatory integrals}, \eqref{time-independent oscillatory integral} \\ \\ [-1em]
$v^{(\aleph)}$, $\quad\aleph\in\{+,-,0\}$ & Full right symbol of $V_\aleph$ \\ \\ [-1em]
$v^{(\aleph;\beth)}$, $\quad\aleph,\beth\in\{+,-,0\}$ & Projection of $v^{(\aleph)}$ along $P^{(\aleph;\beth)}$, \eqref{projected symbols plus minus}, \eqref{projected symbols zero} \\ \\ [-1em]
$w^{(\aleph)}$, $\quad\aleph\in\{+,-,0\}$ & The amplitude \eqref{asymptotic expansiions for the w} \\ \\ [-1em]
$w^{(\aleph;\beth)}$, $\quad\aleph,\beth\in\{+,-,0\}$ & Projection of $w^{(\aleph)}$ along $P^{(\aleph;\beth)}$, \eqref{Introduce the rank 1 matrix-functions} \\ \\ [-1em]
$(x^{(\pm)},\xi^{(\pm)})$ & Hamiltonian flow of $h^{(\pm)}$ \\ \\ [-1em]
$\zeta_Q(s)$ & Zeta function of the operator $Q$ \\ \\ [-1em]
\hline
\end{longtable}

\section{The strategy for the proof}
\label{The strategy for the proof}

To begin with, let us outline the strategy for the proof of our main results.

\begin{definition}
We define the \emph{propagator for the operator $\curl$} as the time-dependent bounded operator in the Hilbert space $\delta\Omega^2$ given by the formula
\begin{equation}
\label{propagator curl}
U(t)
:=
e^{-it\curl}
=
\sum_{j\in\mathbb{Z}\setminus\{0\}}
e^{-i\lambda_j t\,}
u_j\,\langle u_j\,,\,\cdot\ \rangle\,.
\end{equation}
\end{definition}

Of course, the operator $U(t)$ is unitary. Having the propagator at one's disposal allows one to compute spectral asymptotics by means of known, classical techniques, see e.g.~\cite[Appendix~B]{SaVa}.

As our task is to study the distribution of positive and negative eigenvalues of $\curl$ separately, it is convenient to split the propagator $U(t)$ into a sum of two propagators $U(t)=U_+(t)+U_-(t)$,
where
\begin{equation}
\label{propagator curl plus}
U_+(t):=\sum_{j=1}^{+\infty}
e^{-i\lambda_j t}\,
u_j\,\langle u_j\,,\,\cdot\ \rangle\,,
\end{equation}
\begin{equation}
\label{propagator curl minus}
U_-(t):=\sum_{j=-1}^{-\infty}
e^{-i\lambda_j t}\,
u_j\,\langle u_j\,,\,\cdot\ \rangle\,.
\end{equation}

The operators $U(t)$, $U_+(t)$ and $U_-(t)$ were initially defined in the Hilbert space $\delta\Omega^2$, but they admit an obvious extension to the full Hilbert space $\Omega^1$ of square-integrable 1-forms: the expansions in the right-hand sides of formulae \eqref{propagator curl}--\eqref{propagator curl minus} make sense when acting on an arbitrary square-integrable 1-form. Of course, the extended operator $U(t):\Omega^1\to\Omega^1$ is no longer unitary.

The central idea underpinning our analysis is the introduction of the auxiliary operator
\begin{equation}
\label{auxiliary operator}
V(t)
:=
U_+(t)+U_-(t)+U_0\,,
\end{equation}
where $U_0$ is the time-independent operator
\begin{equation}
\label{auxiliary operator U0}
U_0:=P_0+P_{\mathcal{H}^1}\,,
\end{equation}
with $P_0$ and $P_{\mathcal{H}^1}$ 
being orthogonal projections from $\Omega^1$ onto the spaces of
exact 1-forms $\dr\Omega^0$
and
harmonic 1-forms $\mathcal{H}^1$
respectively.
The advantage of working with the operator $V(t)$ is that it is unitary in the full Hilbert space $\Omega^1$ of square-integrable 1-forms.

Observe that the operator $V(t)$ formally satisfies the equation 
\begin{equation}
\label{Cauchy 1}
\left(
-i\,\frac{\partial}{\partial t}+\curl
\right)
V=0
\end{equation}
subject to the initial condition
\begin{equation}
\label{Cauchy 2}
V(0)=\operatorname{Id}\,,
\end{equation}
where $\operatorname{Id}$ is the identity operator in $\Omega^1$.

The essence of our proof of the above theorems is the explicit construction of the operator $V(t)$ as a sum of three oscillatory integrals, modulo the addition of an operator with infinitely smooth (in spatial variables as well as in time) integral kernel.
Here `explicit' refers to the fact that our construction reduces to solving ordinary differential equations.
We will construct the three oscillatory integrals by solving
\eqref{Cauchy 1},
\eqref{Cauchy 2}
modulo the addition of operators with infinitely smooth integral kernels.
This fact will be indicated by using the symbol $\!\!\!\!\mod\Psi^{-\infty}$.
Two of the integrals will be time-dependent (Fourier integral operators) and one time-independent (pseudodifferential operator).

The fact that $\curl$ is not elliptic implies that the Cauchy problem
\eqref{Cauchy 1},
\eqref{Cauchy 2}
is ill-posed, so certain care is required when dealing with it. The relevant justification will be provided in Section~\ref{Justification of the algorithm}. Justification is based on the following two key observations.
\begin{itemize}
\item
The results \cite[Theorem 4.1]{part2} and \cite[Corollary 4.2]{part2} do not require the operator to be elliptic. Namely, the proofs remain valid when one of the eigenvalues of the principal symbol is identically zero.
\item
The operator \emph{extended curl} \cite[Definition~B.1]{curl} acting in $\Omega^1\oplus\Omega^0$ is elliptic and $\delta\Omega^2\oplus\{0\}$ is an invariant subspace of this operator.
\end{itemize}

Let us emphasise that the construction of $V(t)$ is non-standard and somewhat delicate, in that we are dealing with \emph{systems}, as opposed to scalar equations, acting on 1-forms, which give rise, amongst other things, to overdetermined transport equations.

\

Finally, us point out that an alternative way of deriving Weyl coefficients
is by applying methods akin to those in \cite{branson} to the
operator extended curl, which is of Dirac type.
This matter will be explored in a separate paper.

\section{The algorithm}
\label{The algorithm}

In this section we present the algorithm for the explicit construction of the auxiliary operator \eqref{auxiliary operator} modulo $\Psi^{-\infty}$.

\paragraph*{Step 1.}
The principal symbol of $\curl$ reads
\begin{equation*}
\label{principal symbol curl}
[\operatorname{curl}_\mathrm{prin}]_\alpha{}^\beta(x,\xi)= -i\,E_\alpha{}^{\beta\gamma}(x)\,\xi_\gamma\,,
\end{equation*}
recall~\eqref{Levi-Civita tensor}.
The eigenvalues of $\operatorname{curl}_\mathrm{prin}$ are simple and read
\begin{equation}
\label{eigenvalues principal symbol curl}
h^{(0)}(x,\xi)=0\,,\qquad h^{(\pm)}(x,\xi)=\pm\|\xi\|\,,
\end{equation}
for all $(x,\xi)\in T^*M\setminus\{0\}$.
Here and further on we use the notation
\begin{equation}
\label{norm of xi}
\|\xi\|
:=\sqrt{g^{\mu\nu}(x)\,\xi_\mu\xi_\nu}\,.
\end{equation}
Note the dependence of \eqref{norm of xi} on $x$.
By $|\cdot|$ we will denote the Euclidean norm of vectors (no dependence on $x$ here).

We view the $h^{(\pm)}$ as Hamiltonians on the cotangent bundle. The corresponding Hamiltonian flows
$(x^{(\pm)}(t;y,\eta),\xi^{(\pm)}(t;y,\eta))$ are solutions to Hamilton's equations
\begin{equation}
\label{Hamilton's equations}
\dot x^{(\pm)}=h^{(\pm)}_\xi(x^{(\pm)},\xi^{(\pm)})\,,
\qquad
\dot\xi^{(\pm)}=-h^{(\pm)}_x(x^{(\pm)},\xi^{(\pm)})\,,
\end{equation}
with initial conditions
$(x^{(\pm)}(0;y,\eta),\xi^{(\pm)}(0;y,\eta))=(y,\eta)\in T^*M\setminus\{0\}$.
Of course,
$x^{(+)}(t;y,\eta)$ is the geodesic emanating from the point $y\in M$ in the direction
$\,\|\eta\|^{-1}\,g^{\alpha\beta}(y)\,\eta_\beta\,$,
whereas
$x^{(-)}(t;y,\eta)$ is the geodesic emanating from the point $y\in M$ in the opposite direction.

\paragraph*{Step 2.}
Choose phase functions
$\varphi^{(\pm)}(t,x;y,\eta)$ positively homogeneous in $\eta$ of degree one,
$\varphi^{(\pm)}\in C^\infty(\mathbb{R}\times M\times(T^*M\setminus\{0\});\mathbb{C})$,
satisfying the following conditions:
\begin{align}
\label{phase functions condition 1}
\left.\varphi^{(\pm)}\right|_{x=x^{(\pm)}}
&=
0\,,
\\
\label{phase functions condition 2}
\left.\varphi^{(\pm)}_{x^\alpha}\right|_{x=x^{(\pm)}}
&=
\xi^{(\pm)}_\alpha\,,
\\
\label{phase functions condition 3}
\left.\det\varphi^{(\pm)}_{x^\alpha\eta_\beta}\right|_{x=x^{(\pm)}}
&\ne
0\,,
\\
\label{phase functions condition 4}
\operatorname{Im}\varphi^{(\pm)}
&\ge
0\,.
\end{align}

The above definition of phase functions $\varphi^{(\pm)}$ warrants the following remarks.
\begin{itemize}
\item
It is well known that such phase functions exist, see, e.g., \cite{LSV,SaVa,CDV,wave,dirac}.
Furthermore, their construction is straightforward: start with
\eqref{phase functions condition 1}
and
\eqref{phase functions condition 2}
and add a Taylor expansion in powers of $x-x^{(\pm)}$, beginning with quadratic terms.
This Taylor expansion will, of course, depend on the choice of local coordinates $x$.
\item
Condition
\eqref{phase functions condition 3}
is invariant under changes of local coordinates $x$ and $y$.
\item
The quantity
$$\left.\varphi^{(\pm)}_{x^\alpha x^\beta}\right|_{x=x^{(\pm)}}$$
is not a tensor, i.e.~it does not transform in the appropriate way under changes of local coordinates $x$.
However, the quantity
\begin{equation}
\label{imaginary part Hessian phi}
	\left.\operatorname{Im}\varphi^{(\pm)}_{x^\alpha x^\beta}\right|_{x=x^{(\pm)}}
\end{equation}
is a tensor. It is known \cite[Corollary 2.4.5]{SaVa} that if the real symmetric matrix-functions
\eqref{imaginary part Hessian phi}
are strictly positive,
\begin{equation}
\label{phase functions condition 5}
\left.\operatorname{Im}\varphi^{(\pm)}_{x^\alpha x^\beta}\right|_{x=x^{(\pm)}}
>
0\,,
\end{equation}
then condition
\eqref{phase functions condition 3}
is automatically satisfied.
This shows that one can construct complex-valued phase functions globally in time $t\in\mathbb{R}$,
circumventing topological obstructions associated with caustics.
\item
If we use the same local coordinates for $x$ and $y$, then
\begin{equation*}
\label{phase functions fact}
\left.\det\varphi^{(\pm)}_{x^\alpha\eta_\beta}\right|_{t=0,\,x=y}
=
1\,,
\end{equation*}
which implies that if we are interested in constructing the propagators
\eqref{propagator curl plus}
and
\eqref{propagator curl minus}
only for small $t$,
then condition \eqref{phase functions condition 3}
will be automatically satisfied.
In this case there is no need to use complex-valued phase functions.
The more familiar real-valued ones will do.
\end{itemize}

It is convenient for us to choose phase functions $\varphi^{(\pm)}$ so that
$\varphi^{(+)}(0,x;y,\eta)
=
\varphi^{(-)}(0,x;y,\eta)$
and to denote
\begin{equation}
\label{two phase functions coincide at time zero}
\varphi^{(0)}(x;y,\eta)
=
\varphi^{(+)}(0,x;y,\eta)
=
\varphi^{(-)}(0,x;y,\eta)\,.
\end{equation}
Compare with the more restrictive choice of phase functions \cite[formula~(5.5)]{dirac}.

\paragraph*{Step 3.}
We seek the auxiliary operator \eqref{auxiliary operator} in the form
\begin{equation}
\label{definition of V(t)}
V(t)
=
V_+(t)+V_-(t)+V_0
\mod\Psi^{-\infty}\,,
\end{equation}
where the $V_\aleph$, $\aleph\in\{+,-,0\}$, are oscillatory integrals
\begin{align}
\label{time-dependent oscillatory integrals}
V_\pm(t)
:
w_\alpha(x)
&\mapsto
(2\pi)^{-3}
\int
e^{i\varphi^{(\pm)}(t,x;y,\eta)}\,
[v^{(\pm)}]_\alpha{}^\beta(t;y,\eta)\,
\chi^{(\pm)}(t,x;y,\eta)\,
w_\beta(y)\,
\dr y\,\dr\eta\,,
\\
\label{time-independent oscillatory integral}
V_0
:
w_\alpha(x)
&\mapsto
(2\pi)^{-3}
\int
e^{i\varphi^{(0)}(x;y,\eta)}\,
[v^{(0)}]_\alpha{}^\beta(y,\eta)\,
\chi^{(0)}(x;y,\eta)\,
w_\beta(y)\,
\dr y\,\dr\eta\,,
\end{align}
whose symbols $v^{(\aleph)}$, $\aleph\in\{+,-,0\}$, admit asymptotic expansions into components positively homogeneous in $\eta$
\begin{equation}
\label{symbols of propagators U aleph}
[v^{(\aleph)}]_\alpha{}^\beta
\sim
[v^{(\aleph)}_0]_\alpha{}^\beta
+
[v^{(\aleph)}_{-1}]_\alpha{}^\beta
+
\dots,
\end{equation}
where the subscript indicates degree of homogeneity.
The $\chi^{(\aleph)}$, $\aleph\in\{+,-,0\}$, in formulae
\eqref{time-dependent oscillatory integrals}
and
\eqref{time-independent oscillatory integral}
are cut-offs around the singularities, see \cite[Theorem~3.3]{dirac}, and
$\dr y=\dr y^1\dr y^2\dr y^3$,
$\dr\eta=\dr\eta_1\dr\eta_2\dr\eta_3\,$.

Note that in writing our oscillatory integrals in the form
\eqref{time-dependent oscillatory integrals}
and
\eqref{time-independent oscillatory integral}
we use
\emph{right} symbols (no dependence on $x$)
rather than the more familiar
\emph{left} symbols (no dependence on $y$).
Using right symbols is more convenient when dealing with time-dependent operators such as \eqref{auxiliary operator}.

\

\begin{remark}
\label{Let us highlight the difference}
Let us highlight the difference between
the $U_\aleph$, $\aleph\in\{+,-,0\}$, appearing in the RHS of \eqref{auxiliary operator}
and
the $V_\aleph$, $\aleph\in\{+,-,0\}$, appearing in the RHS of \eqref{definition of V(t)}.
The $U_\aleph$ are operators defined by explicit formulae
\eqref{propagator curl plus},
\eqref{propagator curl minus}
and
\eqref{auxiliary operator U0},
whereas the $V_\aleph$ are oscillatory integrals that will be constructed in this section
by means of a formal algorithm.
Of course, we are aiming to show that
\begin{equation}
\label{somewhat delicate matter}
U_\aleph=V_\aleph
\mod\Psi^{-\infty}\,,
\quad
\aleph\in\{+,-,0\}\,.
\end{equation}
However, justification of \eqref{somewhat delicate matter} is a somewhat delicate matter because the operator $\curl$ is not elliptic.
Justification of \eqref{somewhat delicate matter}
will be provided in the next section.
\end{remark}

\

\begin{remark}
\label{remark weights}
For the benefit of the reader, let us mention that the oscillatory integrals appearing in the publication \cite{dirac} cited above have a different structure from \eqref{time-dependent oscillatory integrals}
and
\eqref{time-independent oscillatory integral}. The dissimilarities arise in view of the fact the construction in \cite{dirac} is \emph{global} in space and time, and oscillatory integrals therein feature appropriate weights in the amplitude depending on the phase function. Not only achieving globality in space is not required for the determination of higher order Weyl coefficients, but such an attempt would be frustrated further down the line by the need to choose (normal) local coordinates to carry out explicit calculations. We should also point out that, \emph{a priori}, it is not entirely clear how to achieve a global construction for operators acting on 1-forms.
\end{remark}

\paragraph*{Step 4.}
Write the identity operator on 1-forms as an oscillatory integral
\begin{equation*}
\label{identity operator on 1-forms as an oscillatory integral}
\operatorname{Id}
:
w_\alpha(x)
\mapsto
(2\pi)^{-3}
\int
e^{i\varphi^{(0)}(x;y,\eta)}\,
\operatorname{id}_\alpha{}^\beta(y,\eta)\,
\chi^{(0)}(x;y,\eta)\,
w_\beta(y)\,
\dr y\,\dr\eta
\mod\Psi^{-\infty}\,,
\end{equation*}
where $\varphi^{(0)}$ is the time-independent phase function from Step 2,
see \eqref{two phase functions coincide at time zero}.
The symbol $\operatorname{id}$ admits an asymptotic expansions into components positively homogeneous in $\eta$
\begin{equation}
\label{symbol of identity operator}
\operatorname{id}_\alpha{}^\beta
\sim
[\operatorname{id}_0]_\alpha{}^\beta
+
[\operatorname{id}_{-1}]_\alpha{}^\beta
+
\dots,
\end{equation}
where the subscript indicates degree of homogeneity.

The leading term in \eqref{symbol of identity operator} is the principal symbol of the identity operator and it reads
\begin{equation}
\label{principal symbol of identity operator}
[\operatorname{id}_0]_\alpha{}^\beta(y,\eta)=\delta_\alpha{}^\beta\,,
\end{equation}
where $\delta_\alpha{}^\beta$ is the Kronecker symbol.
The principal symbol $\operatorname{id}_0$ does not depend on the choice of phase function.
However, lower order terms $\operatorname{id}_{-j}$, $j=1,2,\dots$, do depend on the choice of phase function and one needs to determine them
prior to determining the symbols $v^{(\aleph)}$, $\aleph=\{+,-,0\}$ appearing in
\eqref{time-dependent oscillatory integrals}
and
\eqref{time-independent oscillatory integral}.
This matter was addressed in
\cite[Section~6]{wave} for the scalar case.
We do not discuss this issue in the current paper because in all subsequent calculations we will have
$\,\varphi^{(0)}(x;y,\eta)=(x-y)^\alpha\eta_\alpha\,$, with $x$ and $y$ `living' in the same coordinate chart,
in which case $\operatorname{id}_{-j}=0$, $j=1,2,\dots$.

\paragraph*{Step 5.}
Throughout this paper we denote by $P_\pm$ the positive ($+$) and negative ($-$) spectral projections of $\curl$, see \cite[Definition~1.2]{curl},
and by $P_0$ and $P_{\mathcal{H}^1}$ the orthogonal projections from $\Omega^1$ onto the spaces of
exact 1-forms $\dr\Omega^0$
and
harmonic 1-forms $\mathcal{H}^1$
respectively.

Let us denote by $[P^{(\aleph)}]_\alpha{}^\beta(x,\xi)$, $\aleph\in\{+,-,0\}$, the principal symbols of the projection operators $P_\aleph$.
Recall that according to \cite[formula~(3.14)]{curl} we have
\begin{equation}
\label{projection hermiticity}
\overline
{
g_{\alpha\beta'}(x)\,[P^{(\aleph)}]_{\alpha'}{}^{\beta'}(x,\xi)\,g^{\alpha'\beta}(x)
}
=
[P^{(\aleph)}]_\alpha{}^\beta(x,\xi)\,,
\end{equation}
\begin{equation}
\label{projection idempotency}
[P^{(\aleph)}]_\alpha{}^\beta(x,\xi)
\,
[P^{(\beth)}]_\beta{}^\gamma(x,\xi)
=
\delta^{\aleph\beth}
\,
[P^{(\aleph)}]_\alpha{}^\gamma(x,\xi)\,,
\end{equation}
\begin{equation}
\label{projection completeness}
[P^{(+)}]_\alpha{}^\beta(x,\xi)
+
[P^{(-)}]_\alpha{}^\beta(x,\xi)
+
[P^{(0)}]_\alpha{}^\beta(x,\xi)
=
\delta_\alpha{}^\beta\,,
\end{equation}
which means that the $P^{(\aleph)}$ form an orthonormal basis of rank 1 projections in the cotangent fibre.

Put
\begin{equation}
\label{principal symbols of projection operators as functions of time}
P^{(\beth;\pm)}(t;y,\eta):=P^{(\beth)}(x^{(\pm)}(t;y,\eta),\xi^{(\pm)}(t;y,\eta))\,,
\qquad
\beth\in\{+,-,0\}\,.
\end{equation}
Of course, we have
\begin{equation*}
\label{principal symbols of projection operators as functions of time of course}
P^{(\beth;+)}(0;y,\eta)
=
P^{(\beth;-)}(0;y,\eta)
=
P^{(\beth)}(y,\eta)\,,
\qquad
\beth\in\{+,-,0\}\,.
\end{equation*}
It will be convenient for us to use the notation 
\begin{equation}
\label{made this a displayed formula on 20 April 2026}
P^{(\beth;0)}(y,\eta):=P^{(\beth)}(y,\eta)\,.
\end{equation}

Put
\begin{equation}
\label{projected symbols plus minus}
[v^{(\pm;\beth)}_{-j}]_\alpha{}^\beta
:=
[P^{(\beth;\pm)}]_\alpha{}^\gamma
\,
[v^{(\pm)}_{-j}]_\gamma{}^\beta\,,
\qquad
\beth\in\{+,-,0\}\,,
\qquad
j=0,1,\dots,
\end{equation}
where, for the sake of clarity, we suppressed dependence on $(t;y,\eta)$.
Similarly, put
\begin{equation}
\label{projected symbols zero}
[v^{(0;\beth)}_{-j}]_\alpha{}^\beta
:=
[P^{(\beth)}]_\alpha{}^\gamma
\,
[v^{(0)}_{-j}]_\gamma{}^\beta\,,
\qquad
\beth\in\{+,-,0\}\,,
\qquad
j=0,1,\dots,
\end{equation}
\begin{equation}
\label{projected symbols identity}
[\operatorname{id}^{(\beth)}_{-j}]_\alpha{}^\beta
:=
[P^{(\beth)}]_\alpha{}^\gamma
\,
[\operatorname{id}_{-j}]_\gamma{}^\beta\,,
\qquad
\beth\in\{+,-,0\}\,,
\qquad
j=0,1,\dots,
\end{equation}
where, for the sake of clarity, we suppressed dependence on $(y,\eta)$.

In our algorithm the objects
$\operatorname{id}^{(\beth)}_{-j}$
appearing in
\eqref{projected symbols identity}
are the known quantities,
whereas the objects
$v^{(\aleph;\beth)}_{-j}$
appearing in
\eqref{projected symbols plus minus}
and
\eqref{projected symbols zero}
are the unknown quantities (to be determined).
All are rank 1 matrix-functions positively homogeneous in $\eta$ of degree $-j$.
For each $j$ we have nine unknowns
$v^{(\aleph;\beth)}_{-j}$,
$\aleph,\beth\in\{+,-,0\}$.

\paragraph*{Step 6.}
Substitute
\eqref{definition of V(t)}
into the LHS of
\eqref{Cauchy 1}.
We get three oscillatory integrals whose amplitudes pick up the additional dependence on the variable $x$.
Apply the amplitude-to-symbol operator, see Appendix~\ref{The amplitude-to-symbol operator},
to each of the three oscillatory integrals.
This gives us three oscillatory integrals with symbols
$w^{(\pm)}(t;y,\eta)$ and $w^{(0)}(y,\eta)$, respectively.
These admit asymptotic expansions into components positively homogeneous in $\eta$
\begin{equation}
\label{asymptotic expansiions for the w}
[w^{(\aleph)}]_\alpha{}^\beta
\sim
[w^{(\aleph)}_1]_\alpha{}^\beta
+
[w^{(\aleph)}_0]_\alpha{}^\beta
+
[w^{(\aleph)}_{-1}]_\alpha{}^\beta
+
\dots.
\end{equation}
Note that now the leading degree of homogeneity is $+1$.

Introduce the rank 1 matrix-functions
\begin{equation}
\label{Introduce the rank 1 matrix-functions}
[w^{(\aleph;\beth)}_{-j}]_\alpha{}^\beta
:=
[P^{(\beth;\aleph)}]_\alpha{}^\gamma
\,
[w^{(\aleph)}_{-j}]_\gamma{}^\beta\,,
\qquad
\aleph,\beth\in\{+,-,0\}\,,
\qquad
j=-1,0,1,\dots,
\end{equation}
and consider the hierarchy of linear equations
\begin{equation}
\label{hierarchy of linear equations}
w^{(\aleph;\beth)}_{-j}=0,
\qquad
\aleph,\beth\in\{+,-,0\}\,,
\qquad
j=-1,0,1,\dots,
\end{equation}
for the unknowns
\begin{equation*}
\label{the unknowns in the hierarchy of linear equations}
v^{(\aleph;\beth)}_{-j},
\qquad
\aleph,\beth\in\{+,-,0\},
\qquad
j=0,1,\dots.
\end{equation*}
Some of the linear operators appearing in the hierarchy
\eqref{hierarchy of linear equations}
are algebraic (multiplication by a given matrix-function)
whereas others are first order ordinary differential operators in $t$.

Henceforth, we will refer to \eqref{hierarchy of linear equations} as \emph{transport equations}.

The remainder of the algorithm is concerned with solving \eqref{hierarchy of linear equations}
recursively subject to initial conditions
\begin{equation}
\label{initial conditions for the unknowns in the hierarchy of linear equations}
\left.
v^{(+;\beth)}_{-j}
\right|_{t=0}
+
\left.
v^{(-;\beth)}_{-j}
\right|_{t=0}
+
v^{(0;\beth)}_{-j}
=
\operatorname{id}^{(\beth)}_{-j}\,,
\qquad
\beth\in\{+,-,0\},
\qquad
j=0,1,\dots.
\end{equation}

\paragraph*{Step 7.}
Consider \eqref{hierarchy of linear equations} for $j=-1$, which is purely algebraic.
We call this the zeroth transport equation.
Examination of the zeroth transport equation gives us
\begin{equation}
\label{v0 for aleph different from beth}
v^{(\aleph;\beth)}_0=0
\quad\text{for}\quad
\aleph\ne\beth\,.
\end{equation}

The zeroth transport equation does not allow one to determine the $v^{(\aleph;\aleph)}_0$. These will be determined at the next step.

\paragraph*{Step 8a.}
Consider \eqref{hierarchy of linear equations} for $j=0$.
We call this the first transport equation.
We examine it subject to initial conditions
\eqref{initial conditions for the unknowns in the hierarchy of linear equations}, $j=0$.

Consider first the equation $w^{(+;+)}_0=0$. This will allow us to determine $v^{(+;+)}_0$. 

Let $[b^{(+)}]_\beta(x,\xi)$ be an eigencovector of $[\curl_\prin]_\alpha{}^\beta(x,\xi)$ corresponding to the eigenvalue $+\|\xi\|$, normalised in terms of the metric at the point $x$.

\

\begin{remark}
Introducing the eigencovector of the principal symbol of the operator $\curl$
one encounters two issues which, at first glance, appear to be problematic.
\begin{itemize}
\item
Unlike the projection onto an eigenspace of the principal symbol of the operator $\curl$,
the eigencovector of the principal symbol of the operator $\curl$ is not uniquely defined.
It is defined modulo a gauge transformation,
namely,
\begin{equation}
\label{version 26 equation 1}
b^{(+)}
\mapsto
e^{i\phi^{(+)}}\,b^{(+)}\,,
\qquad
\phi^{(+)}
=
\phi^{(+)}(x,\xi)\,,
\end{equation}
where $\phi^{(+)}(x,\xi)$ is an arbitrary real-valued scalar function.
\item
There are topological obstructions to a smooth choice of gauge,
even locally at a fixed point $x\in M$.
See \cite[Proposition~3.2]{obstructions} for details.
\end{itemize}
However, these issues do not affect the final formula for
the invariantly defined matrix-function
$[v^{(+;+)}_0]_\alpha{}^\beta(t;y,\eta)$.
\end{remark}

\

We have
\begin{equation}
\label{decompostion of v plus plus zero}
[v^{(+;+)}_0]_\alpha{}^\beta(t;y,\eta)
=
[b^{(+)}]_\alpha(x^{(+)}(t;y,\eta),\xi^{(+)}(t;y,\eta))
\,
[f^{(+)}_0]^\beta(t;y,\eta)\,,
\end{equation}
where the $[f^{(+)}_0]^\beta(t;y,\eta)$ are some functions positively homogeneous in $\eta$ of degree 0.
Equation $w^{(+;+)}_0=0$ reduces to the ordinary differential equation
\begin{equation}
\label{first transport equation ODE plus}
-i
\frac{\dr[f^{(+)}_0]^\beta}{\dr t}
+
q^{(+)}_0\,[f^{(+)}_0]^\beta
=0\,,
\end{equation}
where $q^{(+)}_0(t;y,\eta)$ is a function positively homogeneous in $\eta$ of degree 0 given by the explicit formula
\begin{multline}
\label{formula for q 4}
q^{(+)}_0
=
-i
\left[
\,
\overline
{	
[b^{(+)}]_\alpha
}
\,g^{\alpha\beta}
\left(
\frac{\partial [b^{(+)}]_\beta}{\partial x^\gamma}
\,
\frac{\partial  h^{(+)}}{\partial\xi_\gamma}
\,-\,
\frac{\partial [b^{(+)}]_\beta}{\partial\xi_\gamma}
\,
\frac{\partial  h^{(+)}}{\partial x^\gamma}
\right)
\right]_{(x,\xi)=(x^{(+)},\,\xi^{(+)})}
\\
+
i
\left[
\,
\overline
{	
[b^{(+)}]_\kappa
}
\,g^{\kappa\lambda}
\right]_{(x,\xi)=(x^{(+)},\,\xi^{(+)})}
\times
\\
\times
\left[
\left(
\frac{\partial}{\partial\eta_\beta}
L^{(+)}_\beta
-
\frac{1}{2}
\varphi^{(+)}_{\eta_\alpha\eta_\beta}
L^{(+)}_\alpha
L^{(+)}_\beta
\right)
\left(
	\varphi^{(+)}_t\,\delta_\lambda{}^\mu
	-i
	E_\lambda{}^{\mu\nu}(x)\,
	\varphi^{(+)}_{x^\nu}
	\right)
	[b^{(+)}]_\mu(x^{(+)},\xi^{(+)})
	\right]_{x=x^{(+)}}.
\end{multline}
See Appendix~\ref{The amplitude-to-symbol operator} for the definition of the differential operators $L^{(+)}_\alpha\,$.

Note that under the gauge transformation \eqref{version 26 equation 1} the function $q^{(+)}_0$ transforms as
\begin{equation}
\label{version 26 equation 2}
q^{(+)}_0
\mapsto
q^{(+)}_0
+
\left(
\frac{\partial\phi^{(+)}}{\partial x^\gamma}
\,
\frac{\partial  h^{(+)}}{\partial\xi_\gamma}
\,-\,
\frac{\partial\phi^{(+)}}{\partial\xi_\gamma}
\,
\frac{\partial  h^{(+)}}{\partial x^\gamma}
\right)_{(x,\xi)=(x^{(+)},\,\xi^{(+)})}
=
q^{(+)}_0
+
\frac{\dr\phi^{(+)}(x^{(+)},\,\xi^{(+)})}{\dr t}\,.
\end{equation}

Formula
\eqref{initial conditions for the unknowns in the hierarchy of linear equations}
with $\beth=+$
and
formulae
\eqref{v0 for aleph different from beth},
\eqref{projected symbols identity},
\eqref{principal symbol of identity operator}
imply
\begin{equation*}
\label{v++0}
[v^{(+;+)}_0]_\alpha{}^\beta(0;y,\eta)
=
[P^{(+)}]_\alpha{}^\beta(y,\eta)\,,
\end{equation*}
so that the initial condition for $[f^{(+)}_0]^\beta$ reads
\begin{equation}
\label{initial condition for f plus}
[f^{(+)}_0]^\beta(0;y,\eta)
=
g^{\beta\gamma}(y)\,
\overline
{
[b^{(+)}]_\gamma(y,\eta)
}\,.
\end{equation}

The solution to the Cauchy problem
\eqref{first transport equation ODE plus},
\eqref{initial condition for f plus}
reads
\begin{equation}
\label{explicit formula for f plus}
[f^{(+)}_0]^\beta(t;y,\eta)
=
e^{-i\int_0^t q^{(+)}_0(\tau;y,\eta)\,\dr\tau}\,
g^{\beta\gamma}(y)\,
\overline
{
[b^{(+)}]_\gamma(y,\eta)
}\,.
\end{equation}
Substituting
\eqref{explicit formula for f plus}
into
\eqref{decompostion of v plus plus zero}
we obtain the formula for $[v^{(+;+)}_0]_\alpha{}^\beta(t;y,\eta)$.

Examination of formulae
\eqref{version 26 equation 1},
\eqref{decompostion of v plus plus zero},
\eqref{version 26 equation 2}
and
\eqref{explicit formula for f plus}
shows that the formula for $v^{(+;+)}_0$
is invariant under gauge transformations \eqref{version 26 equation 1} of
the eigencovector of the principal symbol of the operator $\curl$.

The formula for $[v^{(-;-)}_0]_\alpha{}^\beta(t;y,\eta)$ is obtained by solving the equation $w^{(-;-)}_0=0$ in a similar fashion.

\paragraph*{Step 8b.}
Formula
\eqref{initial conditions for the unknowns in the hierarchy of linear equations}
with $\beth=0$
and
formulae
\eqref{v0 for aleph different from beth},
\eqref{projected symbols identity},
\eqref{principal symbol of identity operator}
imply
\begin{equation*}
\label{v000}
[v^{(0;0)}_0]_\alpha{}^\beta(y,\eta)
=
[P^{(0)}]_\alpha{}^\beta(y,\eta)\,.
\end{equation*}

Let us now focus our attention on equations $w^{(0;\beth)}_0=0$, $\beth\in\{+,-,0\}$.
We are looking at a system of three time-independent linear algebraic equations
for two time-independent unknowns $v^{(0;\pm)}_{-1}\,$,
i.e.~we are looking at an overdetermined system.
It seems that we have encountered an impasse.

We overcome this overdeterminacy by choosing to \emph{disregard} equation $w^{(0;0)}_0=0$. 
It will be shown in
Section~\ref{Justification of the algorithm}
that this equation is satisfied \emph{automatically}.

\paragraph*{Step 8c.}
Equations $w^{(\aleph;\beth)}_0=0$, $\aleph\ne\beth$,
are algebraic equations for the unknowns
$v^{(\aleph;\beth)}_{-1}$, $\aleph\ne\beth$.
These algebraic equations are of the form
$\,(h^{(\aleph)}-h^{(\beth)})v^{(\aleph;\beth)}_{-1}=\dots\,$,
where the right-hand sides $\,\dots\,$ are known,
so the algebraic equations in questions can be solved explicitly.

Thus, we have determined the
$v^{(\aleph;\aleph)}_0$
and the
$v^{(\aleph;\beth)}_{-1}$, $\aleph\ne\beth$.
This concludes our analysis of the first transport equation.

\paragraph*{Step 9.}
Suppose that we have analysed the first $k$ transport equations.
This means that we have solved
\eqref{hierarchy of linear equations}
for $j=-1,0,1,2,\dots,k-1$
and determined the
$v^{(\aleph;\aleph)}_{-j}$ for $j=1,2,\dots,k-1$
and the
$v^{(\aleph;\beth)}_{-j}$, $\aleph\ne\beth$, for $j=1,2,\dots,k$.
We now need to solve
\eqref{hierarchy of linear equations}
for $j=k$
and determine the
$v^{(\aleph;\aleph)}_{-k}$
and the
$v^{(\aleph;\beth)}_{-k-1}$, $\aleph\ne\beth$.

Then the argument from Step 8 can be repeated with minor adjustments. Namely,
formulae
\eqref{decompostion of v plus plus zero}
and
\eqref{first transport equation ODE plus}
now read
\begin{equation}
\label{decompostion of v plus plus zero adjustment}
[v^{(+;+)}_{-k}]_\alpha{}^\beta(t;y,\eta)
=
[b^{(+)}]_\alpha(x^{(+)}(t;y,\eta),\xi^{(+)}(t;y,\eta))
\,
[f^{(+)}_{-k}]^\beta(t;y,\eta)\,,
\end{equation}
\begin{equation}
\label{first transport equation ODE plus adjustment}
-i
\frac{\dr[f^{(+)}_{-k}]^\beta}{\dr t}
+
q^{(+)}_0\,[f^{(+)}_{-k}]^\beta
+
[r^{(+)}_{-k}]^\beta
=0\,,
\end{equation}
where the $[f^{(+)}_{-k}]^\beta(t;y,\eta)$ are some unknown functions positively homogeneous in $\eta$ of degree $-k$ and the $r^{(+)}_{-k}(t;y,\eta)$ are some given functions positively homogeneous in $\eta$ of degree $-k$.
The $q^{(+)}_0(t;y,\eta)$ in
\eqref{first transport equation ODE plus adjustment}
is the same as in
\eqref{first transport equation ODE plus}:
it is positively homogeneous in $\eta$ of degree 0 and given by the explicit formula
\eqref{formula for q 4}.

Equation
\eqref{first transport equation ODE plus adjustment}
is accompanied by an initial condition originating from
\eqref{initial conditions for the unknowns in the hierarchy of linear equations}.

As in Step 8b, we choose to \emph{disregard} equation $w^{(0;0)}_{-k}=0$. 
It will be shown in
Section~\ref{Justification of the algorithm}
that this equation is satisfied \emph{automatically}.

\

\begin{remark}
\label{gauge transformations now sit at the source}
Our construction of the $v^{(+;+)}_{-k}$, $k=0,1,2,\dots$,
is invariant under gauge transformations \eqref{version 26 equation 1} of
the eigencovector of the principal symbol of the operator $\curl$,
however in practical calculations one has to choose a gauge.
It is convenient to define $[b^{(+)}_\shortparallel]_\alpha(t;y,\eta)$ as the
result of parallel transport of the covector $[b^{(+)}]_\alpha(y,\eta)$
along the geodesic $x^{(+)}(\tau;y,\eta)$
from $\tau=0$ to $\tau=t$.
Then $[b^{(+)}_\shortparallel]_\alpha(t;y,\eta)$ is an
eigencovector of $[\curl_\prin]_\alpha{}^\beta(x^{(+)}(t;y,\eta),\xi^{(+)}(t;y,\eta))$
and we can replace
\eqref{decompostion of v plus plus zero}
and
\eqref{decompostion of v plus plus zero adjustment}
with
\begin{equation}
\label{decompostion of v plus plus zero adjustment parallel}
[v^{(+;+)}_{-k}]_\alpha{}^\beta(t;y,\eta)
=
[b^{(+)}_\shortparallel]_\alpha(t;y,\eta)
\,
[f^{(+)}_{-k,\shortparallel}]^\beta(t;y,\eta)\,,
\qquad k=0,1,2,\dots.
\end{equation}
The advantage of the representation
\eqref{decompostion of v plus plus zero adjustment parallel}
is that the gauge transformation
now sits at the source,
\begin{equation}
\label{version 26 equation 1 parallel}
b^{(+)}_\shortparallel
\mapsto
e^{i\phi^{(+)}}\,b^{(+)}_\shortparallel\,,
\qquad
\phi^{(+)}
=
\phi^{(+)}(y,\eta)\,,
\end{equation}
where $\phi^{(+)}(y,\eta)$ is an arbitrary real-valued scalar function.
Compare
\eqref{version 26 equation 1}
and
\eqref{version 26 equation 1 parallel}:
now we have $\phi^{(+)}=\phi^{(+)}(y,\eta)$
rather than $\phi^{(+)}=\phi^{(+)}(x,\xi)$.
This will come handy in Section~\ref{Implementation of the algorithm}
when we will fix the $y$.

Formula \eqref{formula for q 4} now reads
{\small
\begin{multline}
\label{formula for q 4 parallel}
q^{(+)}_{0,\shortparallel}
=
-i
\,
\overline
{	
[b^{(+)}_\shortparallel]_\alpha
}
\,g^{\alpha\beta}(x^{(+)})\,
\frac{\dr [b^{(+)}_\shortparallel]_\beta}{\dr t}
\\
+
i
\,
\overline
{	
[b^{(+)}_\shortparallel]_\kappa
}
\,g^{\kappa\lambda}(x^{(+)})
\left[
\left(
\frac{\partial}{\partial\eta_\beta}
L^{(+)}_\beta
-
\frac{1}{2}
\varphi^{(+)}_{\eta_\alpha\eta_\beta}
L^{(+)}_\alpha
L^{(+)}_\beta
\right)
\left(
	\varphi^{(+)}_t\,\delta_\lambda{}^\mu
	-i
	E_\lambda{}^{\mu\nu}(x)\,
	\varphi^{(+)}_{x^\nu}
	\right)
	[b^{(+)}_\shortparallel]_\mu
	\right]_{x=x^{(+)}}.
\end{multline}
}
It is easy to see that the RHS of \eqref{formula for q 4 parallel}
is invariant under gauge transformations
\eqref{version 26 equation 1 parallel}
and so are the resulting
$v^{(+;+)}_{-k}$, $k=0,1,2,\dots$.
\end{remark}

\section{Justification of the algorithm}
\label{Justification of the algorithm}

In the previous section we presented a formal algorithm for the construction of three oscillatory integrals,
two time-dependent oscillatory integrals $V_\pm(t)$
and one time-independent oscillatory integral $V_0$.
The task at hand is to justify that these describe,
modulo the addition of operators with infinitely smooth (in all variables) integral kernels,
the operators $U_+(t)$,  $U_-(t)$ and $U_0$ defined in accordance with formulae
\eqref{propagator curl plus},
\eqref{propagator curl minus}
and
\eqref{auxiliary operator U0}
respectively.
In this section we prove \eqref{somewhat delicate matter}.

\

\noindent\textbf{\textsc{Part 1.}}

The algorithm described in Section~\ref{The algorithm} requires us to disregard
time-independent transport equations $w^{(0;0)}_{-k}=0$, $k=0,1,2,\dots$,
see Steps 8b and 9.
In this part we show that we automatically get
\begin{equation}
\label{disregard equations}
w^{(0;0)}_{-k}=0,\quad k=0,1,2,\dots.
\end{equation}

Let $W$ be the pseudodifferential operator with
phase function $\varphi^{(0)}(x;y,\eta)$ and right symbol
\begin{equation*}
\label{20 April 2026 equation 1}
[w^{(0;0)}_0]_\alpha{}^\beta(y,\eta)
+
[w^{(0;0)}_{-1}]_\alpha{}^\beta(y,\eta)
+
[w^{(0;0)}_{-2}]_\alpha{}^\beta(y,\eta)
+
\dots,
\end{equation*}
see also
\eqref{asymptotic expansiions for the w},
\eqref{Introduce the rank 1 matrix-functions}
and
\eqref{made this a displayed formula on 20 April 2026}.
We have
\begin{equation}
\label{20 April 2026 equation 2}
\curl V_0=W\mod\Psi^{-\infty}
\end{equation}
because the operator $W$ plays the role of error term in our algorithm.

Suppose that \eqref{disregard equations} is false. Let $l$
be the smallest value of $k$ for which \eqref{disregard equations} fails.
Then the operator $W$ is of order $\,-l\,$ and its principal symbol $W_\prin$
(of degree $-l$) is such that
\begin{equation}
\label{20 April 2026 equation 3}
W_\prin(y,\eta)\ne0
\quad
\text{for some}
\quad
(y,\eta)\in T^*M\setminus\{0\}\,.
\end{equation}

Acting on
\eqref{20 April 2026 equation 2}
with the projection operator $\,P_0\,$,
we get
\begin{equation}
\label{20 April 2026 equation 4}
P_0\,W=0\mod\Psi^{-\infty}.
\end{equation}
The operator $\,P_0\,W\,$ is of order $\,-l\,$ and its principal symbol
$\,(P_0\,W)_\prin\,$ (of degree $-l$) is obtained by multiplying the principal symbols of
$\,P_0\,$ and $W$.
Formulae
\eqref{Introduce the rank 1 matrix-functions}
and
\eqref{made this a displayed formula on 20 April 2026}
imply
\begin{equation}
\label{20 April 2026 equation 5}
(P_0\,W)_\prin=W_\prin\,.
\end{equation}

Formulae
\eqref{20 April 2026 equation 5}
and
\eqref{20 April 2026 equation 3}
contradict
\eqref{20 April 2026 equation 4}.

\

\noindent\textbf{\textsc{Part 2.}}

Our oscillatory integrals $V_\aleph$, $\aleph\in\{+,-,0\}$, satisfy the identities
\begin{equation}
\label{our algorithm implies the identities}
P_\beth\,V_\aleph=0\mod\Psi^{-\infty}
\quad\text{for}\quad
\aleph\ne\beth\,.
\end{equation}
These are established by means of an argument similar to that presented in the proof of \cite[Theorem~4.1]{part2}.

For concreteness, let us show~\eqref{our algorithm implies the identities} for the case $\aleph=+$ and $\beth=-$; the other cases are dealt with analogously. 
Arguing by contradiction, suppose that
\begin{equation}
\label{proof justification matteo 1}
P_-\,V_+(t)\in C^\infty(\mathbb{R};\Psi^{-k}) \quad \text{but} \quad P_-\,V_+(t)\not\in C^\infty(\mathbb{R};\Psi^{-k-1})
\end{equation}
for some $k>0$.
Since $[\curl,P_-]\in \Psi^{-\infty}$, the construction of the symbol of $V_\pm(t)$ from the previous section implies that the operator $P_-V_+(t)$ satisfies
\begin{equation}
\label{proof justification matteo 2}
(-i\partial_t+\curl)(P_-V_+(t))=0 \mod \Psi^{-\infty}\,.
\end{equation}
Examining the leading transport equation arising from \eqref{proof justification matteo 2} at the level of the symbol (e.g., arguing along the lines of \cite[Proof of Theorem~4.1]{part2}) gives the identity\footnote{Here the notation $A_{\mathrm{prin},k}$ 
stands for the leading term, of degree of homogeneity of $-k$, of the symbol of the oscillatory integral $A$.}
\begin{equation}
\label{proof justification matteo 3}
P^{(+;+)}[P_-V_+(t)]_{\mathrm{prin},k}=[P_-V_+(t)]_{\mathrm{prin},k}\,.
\end{equation}
On the other hand, the identity $P_-^2V_+(t)=P_-V_+(t)\mod \Psi^{-\infty}$ implies
\begin{equation}
\label{proof justification matteo 3bis}
P^{(-;+)}[P_-V_+(t)]_{\mathrm{prin},k}=[P_-V_+(t)]_{\mathrm{prin},k}\,.
\end{equation}
Since $P^{(+;+)}P^{(-;+)}=0$, formulae~\eqref{proof justification matteo 3} and~\eqref{proof justification matteo 3bis} in turn 
give us
\begin{equation*}
\label{proof justification matteo 4}
[P_-V_+(t)]_{\mathrm{prin},k}=0\,.
\end{equation*}
The latter contradicts~\eqref{proof justification matteo 1}.

\

\noindent\textbf{\textsc{Part 3.}}

Arguing as in \cite[Corollary~4.2]{part2}, we see that
\eqref{our algorithm implies the identities} implies
\begin{equation}
\label{Arguing as in Corollary 4 point 2 pm}
V_\pm(0)=P_\pm\mod\Psi^{-\infty}\,,
\end{equation}
\begin{equation}
\label{Arguing as in Corollary 4 point 2 0}
V_0=P_0\mod\Psi^{-\infty}\,.
\end{equation}

For the sake of concreteness, let us show~\eqref{Arguing as in Corollary 4 point 2 0}. The proof of~\eqref{Arguing as in Corollary 4 point 2 pm} proceeds analogously. 
By construction, we have
\begin{equation}
\label{proof justification matteo 6}
V_+(0)+V_-(0)+V_0=\mathrm{Id} \mod \Psi^{-\infty}\,.
\end{equation}
Furthermore, \eqref{our algorithm implies the identities} implies
\begin{equation}
\label{proof justification matteo 7}
V_0=P_0 V_0 \mod \Psi^{-\infty}\,.
\end{equation}
Acting on~\eqref{proof justification matteo 6} on the left first with $P_++P_-+P_0$ and then with $P_0$, on account of~\eqref{proof justification matteo 7} and the almost-orthogonality of pseudodifferential projections, we obtain
\begin{equation}
\label{proof justification matteo 8}
P_0 V_0=P_0 \mod \Psi^{-\infty}\,.
\end{equation}
Combining formulae~\eqref{proof justification matteo 8} and~\eqref{proof justification matteo 7} we arrive at~\eqref{Arguing as in Corollary 4 point 2 0}.

Formula
\eqref{Arguing as in Corollary 4 point 2 0}
implies
\eqref{somewhat delicate matter}
with $\aleph=0$.
However, formula
\eqref{Arguing as in Corollary 4 point 2 pm}
does not imply
\eqref{somewhat delicate matter}
with $\aleph=\pm$ because of the presence of the time variable.

\begin{remark}
\label{remark P0 is a rank 1 matrix-function}
Consider the operator $P_0\,$, the orthogonal projection from $\Omega^1$ onto the space of exact 1-forms $\dr\Omega^0,$ and write it as a pseudodifferential operator with right symbol as in
\eqref{time-independent oscillatory integral}
and
\eqref{Arguing as in Corollary 4 point 2 0}.
Suppose we are using the phase function
$\varphi^{(0)}=(x-y)^\gamma\eta_\gamma\,$.
Then the full right symbol $[v^{(0)}]_\alpha{}^\beta(y,\eta)$ of the operator $P_0$ has the remarkable property that it is a rank 1 matrix-function. Namely, we have
\begin{equation*}
\label{remark P0 is a rank 1 matrix-function equation 1}
[v^{(0)}]_\alpha{}^\beta(y,\eta)
=
i\eta_\alpha\,
a^\beta(y,\eta)\,,
\end{equation*}
where $a^\beta(y,\eta)$ is the full right symbol of the pseudodifferential operator
\begin{equation*}
\label{remark P0 is a rank 1 matrix-function equation 2}
{-}\Delta^{-1}\delta:\Omega^1\to\Omega^0\,,
\end{equation*}
which is of order $-1$.
This follows immediately from \cite[formula~(5.5)]{curl}.

The fact that the right symbol of $P_0$ factorises is useful for running checks on calculations.
\end{remark}

\

\noindent\textbf{\textsc{Part 4.}}

Next, we prove
\eqref{somewhat delicate matter}
for $\aleph=\pm$.

According to the algorithm from Section~\ref{The algorithm}, the oscillatory integrals $V_\pm(t)$ satisfy equations
\begin{equation}
\label{Cauchy 1 pm}
\left(
-i\,\frac{\partial}{\partial t}+\curl
\right)
V_\pm=0
\mod\Psi^{-\infty}
\end{equation}
subject to initial conditions \eqref{Arguing as in Corollary 4 point 2 pm}.
We need to show that \eqref{Cauchy 1 pm} and \eqref{Arguing as in Corollary 4 point 2 pm}
imply
\eqref{somewhat delicate matter}
for $\aleph=\pm$.

Consider the oscillatory integrals $(\delta V_\pm)(t)$. These are operators mapping 1-forms to scalar functions. Formula \eqref{Arguing as in Corollary 4 point 2 pm} implies
\begin{equation}
\label{2 April 2026 equation 1}
(\delta V_\pm)(0)=0\mod\Psi^{-\infty}\,.
\end{equation}
Acting with $\delta$ on \eqref{Cauchy 1 pm} we get
\begin{equation}
\label{2 April 2026 equation 2}
\frac{\partial}{\partial t}
\delta V_\pm=0
\mod\Psi^{-\infty}\,.
\end{equation}
Formulae
\eqref{2 April 2026 equation 1}
and
\eqref{2 April 2026 equation 2}
imply
\begin{equation}
\label{2 April 2026 equation 3}
(\delta V_\pm)(t)=0\mod\Psi^{-\infty}\,.
\end{equation}

Consider the operator \emph{extended curl}
\begin{equation}
\label{definition extended curl}
\operatorname{curl}_E:=
\begin{pmatrix}
\operatorname{curl}&\dr\\
\delta&0
\end{pmatrix}:
(\Omega^1  \cap H^1) \oplus (\Omega^0\cap H^1)
\to
\Omega^1\oplus \Omega^0\,,
\end{equation}
see \cite[Definition~B.1]{curl}. It acts on 2-columns where the first entry is a 1-form and the second --- a scalar function. The advantage of working with extended curl is that it is elliptic.

Put
\begin{equation}
\label{2 April 2026 equation 5}
V_{E,\pm}(t)
:=
\begin{pmatrix}
V_\pm(t)&0
\\
0&0
\end{pmatrix}.
\end{equation}
Formulae
\eqref{2 April 2026 equation 3}--\eqref{2 April 2026 equation 5}
and
\eqref{Arguing as in Corollary 4 point 2 pm}
imply
\begin{equation}
\label{2 April 2026 equation 6}
\left(
-i\,\frac{\partial}{\partial t}+\curl_E
\right)
V_{E,\pm}=0
\mod\Psi^{-\infty}\,,
\end{equation}
\begin{equation}
\label{2 April 2026 equation 7}
V_{E,\pm}(0)
=
\begin{pmatrix}
P_\pm&0
\\
0&0
\end{pmatrix}
\mod\Psi^{-\infty}\,.
\end{equation}
Here we are looking at a well-posed Cauchy problem
for the unknown operator $V_{E,\pm}(t)$.

The Cauchy problem
\eqref{2 April 2026 equation 6},
\eqref{2 April 2026 equation 7}
can be solved by writing a series expansion in eigenfunctions of extended curl. The latter were described in \cite[subsection~B.2]{curl}. Straightforward analysis of this series gives
\begin{equation}
\label{2 April 2026 equation 8}
V_{E,\pm}(t)
=
\begin{pmatrix}
U_\pm(t)&0
\\
0&0
\end{pmatrix}
\mod\Psi^{-\infty}\,.
\end{equation}

Formulae
\eqref{2 April 2026 equation 5}
and
\eqref{2 April 2026 equation 8}
imply
\eqref{somewhat delicate matter}
with $\aleph=\pm$.

\begin{remark}
It can be shown that the propagators of $\curl$ and extended curl are related as
\begin{equation*}
\label{propagator of extended curl}
e^{-it\curl_E}
=
\begin{pmatrix}
U(t)+\dr\cos(t\sqrt{-\Delta})(-\Delta)^{-1}\delta+P_{\mathcal{H}^1}
&
-i\dr(-\Delta)^{-1/2}\sin(t\sqrt{-\Delta})
\\
-i\sin(t\sqrt{-\Delta})(-\Delta)^{-1/2}\delta
&
\cos(t\sqrt{-\Delta})
\end{pmatrix}.
\end{equation*}
This follows from \cite[formula~(B.6)]{curl}.
Here $\Delta$ is Laplace--Beltrami operator,
and the negative powers of $\,-\Delta\,$ refer to the pseudoinverse
(projection onto the eigenspace corresponding to the zero eigenvalue being excluded, as in Chapter 2 Section 2 of \cite{rellich}).

It it is easy to see that
\begin{equation*}
\label{2 April 2026 equation 9}
\begin{pmatrix}
P_\pm&0
\\
0&0
\end{pmatrix}
e^{-it\curl_E}
=
e^{-it\curl_E}
\begin{pmatrix}
P_\pm&0
\\
0&0
\end{pmatrix}
=
\begin{pmatrix}
U_\pm(t)&0
\\
0&0
\end{pmatrix}
\end{equation*}
which agrees with \eqref{2 April 2026 equation 8}.
\end{remark}


\

\noindent\textbf{\textsc{Part 5.}}

Finally, we clarify why the equation \(w^{(\pm;\pm)}_{-k} = 0\) does indeed reduce to an ordinary differential equation in time in the unknown \(v^{(\pm;\pm)}_{-k}\), as was claimed in Steps 8a and 9 of the algorithm described in Section \ref{The algorithm}.
This is a consequence of the following key identity:
\begin{equation}
\label{eikonal magic}
\small
	g^{\beta\delta}\bigl(x^{(\pm)}\bigr)\,\overline{b^{(\pm)}_\delta\bigl(x^{(\pm)},\xi^{(\pm)}\bigr)}\left.\frac{\partial}{\partial x^\sigma}\!\left(\frac{\partial\varphi^{(\pm)}}{\partial t}\,\delta_\beta{}^\gamma +[\curl_{\prin}]_\beta{}^\gamma\bigl(x,\mathrm{d}_x\varphi^{(\pm)}\bigr)\right)\right|_{x=x^{(\pm)}}b^{(\pm)}_\gamma\bigl(x^{(\pm)},\xi^{(\pm)}\bigr) = 0\,.
\end{equation}

\begin{remark}
	Formula \eqref{eikonal magic} follows from its scalar analogue \cite[Theorem~2.4.16]{SaVa}, which in our setting reads
	\begin{equation}
	\label{eq:scalar_eikonal}
	\left.\frac{\partial}{\partial x^\sigma}\!\left(\frac{\partial\varphi^{(\pm)}}{\partial t}(t,x;y,\eta) +h^{(\pm)}\bigl(x,\mathrm{d}_x\varphi^{(\pm)}(t,x;y,\eta)\bigr)\right)\right|_{x=x^{(\pm)}(t;y,\eta)} = 0\,.
	\end{equation}
	The function in round brackets in \eqref{eq:scalar_eikonal} is known as the (\emph{scalar}) \emph{eikonal function} \(\mathrm{e}^{(\pm)}(t,x;y,\eta)\). Since \(h^{(\pm)}\) is positively homogeneous in momentum of degree 1, formula \eqref{eq:scalar_eikonal} tells us that the scalar eikonal function has a second order zero in \(x\) at \(x=x^{(\pm)}(t;y,\eta)\).
\end{remark}

\section{Implementation of the algorithm}
\label{Implementation of the algorithm}

\subsection{Simplifications}
\label{Simplifications}

Implementation of the algorithm described in Section~\ref{The algorithm} is a challenging task. In this section we describe a number of simplifications that make the algorithm more manageable.

\paragraph*{Simplification 1.}
Drop the $\,w_\beta(y)\,\dr y\,$ in
\eqref{time-dependent oscillatory integrals}
and
\eqref{time-independent oscillatory integral}
and consider the oscillatory integrals
\begin{align}
\label{time-dependent oscillatory integrals dropped}
&(2\pi)^{-3}
\int
e^{i\varphi^{(\pm)}(t,x;y,\eta)}\,
[v^{(\pm)}]_\alpha{}^\beta(t;y,\eta)\,
\chi^{(\pm)}(t,x;y,\eta)\,
\dr\eta\,,
\\
\label{time-independent oscillatory integral dropped}
&(2\pi)^{-3}
\int
e^{i\varphi^{(0)}(x;y,\eta)}\,
[v^{(0)}]_\alpha{}^\beta(y,\eta)\,
\chi^{(0)}(x;y,\eta)\,
\dr\eta
\end{align}
instead.
The quantity
\eqref{time-dependent oscillatory integrals dropped}
is a distribution in the variables $t$ and $x$,
whereas
\eqref{time-independent oscillatory integral dropped}
is a distribution in the variable $x$.
The $y\in M$ (source) takes on the role of parameter.

\paragraph*{Simplification 2.}
Choose geodesic normal coordinates centred at $y=0$.
These coordinates are defined uniquely
up to a rigid rotation in $\mathbb{R}^3$.

It is known \cite[formula~(6.33)]{curl} that in geodesic normal coordinates the metric tensor reads
\begin{equation}
\label{14 September 2021 equation 1bis}
g_{\alpha\beta}(x)
=
\delta_{\alpha\beta}
-
\frac13
\Riem_{\alpha\mu\beta\nu}(0)\,x^\mu\,x^\nu
-
\frac16
(\nabla_\sigma \Riem_{\alpha\mu\beta\nu})(0)\,x^\sigma\,x^\mu\,x^\nu
+O(|x|^4)\,,
\end{equation}
where $\nabla$ is the Levi-Civita connection.
For further terms in the Taylor expansion
\eqref{14 September 2021 equation 1bis}
see \cite[formula~(6)]{Uwe Muller}.

As we are working in dimension 3, the Riemann tensor $\Riem$ is expressed in terms of the Ricci tensor $\Ric$ via the identity
\begin{multline}
\label{14 September 2021 equation 5}
\Riem_{\alpha\beta\gamma\delta}(x)
=
\Ric_{\alpha\gamma}(x)\,g_{\beta\delta}(x)
-
\Ric_{\alpha\delta}(x)\,g_{\beta\gamma}(x)
+
\Ric_{\beta\delta}(x)\,g_{\alpha\gamma}(x)
-
\Ric_{\beta\gamma}(x)\,g_{\alpha\delta}(x)
\\
+
\frac{\Sc(x)}{2}
(
g_{\alpha\delta}(x)\,g_{\beta\gamma}(x)
-
g_{\alpha\gamma}(x)\,g_{\beta\delta}(x)
)\,,
\end{multline}
where $\Sc(x):=g^{\alpha\beta}(x)\Ric_{\alpha\beta}(x)$ is scalar curvature.
The results of subsequent calculations will be expressed in terms of quantities
\[
\Ric_{\alpha\beta}(0),\
\nabla_\kappa\Ric_{\alpha\beta}(0),\
\nabla_\kappa\nabla_\lambda\Ric_{\alpha\beta}(0),\ \dots,
\qquad
\Sc(0),\
\nabla_\kappa\Sc(0),\
\nabla_\kappa\nabla_\lambda\Sc(0),\ \dots.
\]

\paragraph*{Simplification 3.}
In geodesic normal coordinates for sufficiently small $t$ we have
\begin{equation*}
\label{28 February 2024 equation 4}
[x^{(\pm)}]^\alpha(t;0,\eta)
=
\pm
\frac{\eta^\alpha}{|\eta|}\,t\,,
\qquad
[\xi^{(\pm)}]_\alpha(t;0,\eta)
=
\eta_\alpha\,,
\end{equation*}
where $|\eta|$ stands for the Euclidean norm,
i.e.~$|\eta|=(\delta^{\alpha\beta}\eta_\alpha\eta_\beta)^{1/2}$.
Here and further on we denote
$\eta^\alpha:=\delta^{\alpha\beta}\eta_\beta$.

We choose phase functions
\begin{equation}
\label{choose phase functions pm}
\varphi^{(\pm)}(t,x;0,\eta)
=
[x-x^{(\pm)}(t;0,\eta)]^\alpha
\,
[\xi^{(\pm)}]_\alpha(t;0,\eta)
=
x^\alpha\eta_\alpha\mp|\eta|t
\end{equation}
which are linear in $x$ and in $t$.
In this paper we focus on constructing the propagators
\eqref{propagator curl plus}
and
\eqref{propagator curl minus}
for small $t$, so there is no need to circumvent caustics and introduce complex-valued phase functions as in \eqref{phase functions condition 5}.

Of course, formulae
\eqref{two phase functions coincide at time zero}
and
\eqref{choose phase functions pm}
imply
\begin{equation*}
\label{choose phase functions 0}
\varphi^{(0)}(x;0,\eta)
=
x^\alpha\eta_\alpha\,.
\end{equation*}

\paragraph*{Simplification 4.}
The explicit formula for normalised eigencovectors of $[\curl_\prin]_\alpha{}^\beta(0,\eta)$ corresponding to nonzero eigenvalues reads
\begin{equation}
\label{17 June 2025 equation 1}
[b^{(\pm)}]_\beta(0,\eta)
=
\frac{1}{\sqrt{2}\,|\eta|}
\left[
\begin{pmatrix}
\eta_3\\
\pm i\eta_3\\
-(\eta_1\pm i\eta_2)
\end{pmatrix}
\pm
\frac
{i(\eta_1\pm i\eta_2)}
{\eta_3+|\eta|}
\begin{pmatrix}
-\eta_2\\
\eta_1\\
0
\end{pmatrix}
\right].
\end{equation}
Here upper sign corresponds to eigenvalue $+|\eta|$ and lower to $-|\eta|$.

Formula \eqref{17 June 2025 equation 1} works for all $\eta\ne0$ except for the south pole $\eta=(0,0,-|\eta|)$. Writing a formula for $b^{(\pm)}(0,\eta)$ so that it smoothly depends on $\eta$ for all $\eta\in\mathbb{R}^3\setminus\{0\}$ is impossible because of a topological obstruction \cite[Proposition~3.2]{obstructions}.

The eigencovectors \eqref{17 June 2025 equation 1} are defined uniquely modulo gauge transformations 
$b^{(\pm)}
\mapsto
e^{i\phi^{(\pm)}}\,b^{(\pm)}$,
where $\phi^{(\pm)}(0,\eta)$ are arbitrary real-valued scalar functions.

As in Remark~\ref{gauge transformations now sit at the source},
we define $[b^{(\pm)}_\shortparallel]_\alpha(t;0,\eta)$ as the
result of parallel transport of the covector $[b^{(\pm)}]_\alpha(0,\eta)$
along the geodesic (straight line)
$\,\displaystyle
[x^{(\pm)}]^\alpha(\tau;0,\eta)
=
\pm
\frac{\delta^{\alpha\beta}\eta_\beta}{|\eta|}\tau\,$
from $\tau=0$ to $\tau=t$.

\paragraph*{Simplification 5.}
In Appendix~\ref{The amplitude-to-symbol operator}
we defined the operator $\mathfrak{S}_{-1}\,$,
see formulae
\eqref{22 April 2026 equation 1}--\eqref{operator F j}.
This operator now reads,
for phase functions $\varphi^{(\pm)}$,
\begin{equation}
\label{22 April 2026 equation 2}
\mathfrak{S}^{(\pm)}_{-1}
=
i
\left[
\left(
\frac{\partial^2}{\partial x^\beta\partial\eta_\beta}
\pm
\frac{t}{2}
\frac{\delta^{\alpha\beta}|\eta|^2-\eta^\alpha\eta^\beta}{|\eta|^3}
\frac{\partial^2}{\partial x^\alpha\partial x^\beta}
\right)
(\,\cdot\,)
\right]_{x=x^{(\pm)}}.
\end{equation}

\paragraph*{Simplification 6.}
In view of \eqref{22 April 2026 equation 2},
formula \eqref{formula for q 4 parallel} admits a further simplification and now reads,
for $\aleph=\pm\,$,
\begin{multline*}
q^{(\pm)}_{0,\shortparallel}
=
-i
\,
\overline
{	
[b^{(\pm)}_\shortparallel]_\alpha
}
\,g^{\alpha\beta}(x^{(\pm)})\,
\frac{\dr [b^{(\pm)}_\shortparallel]_\beta}{\dr t}
\\
+
\overline
{	
[b^{(\pm)}_\shortparallel]_\kappa
}
\,g^{\kappa\lambda}(x^{(\pm)})
\left[
\left(
\frac{\partial^2}{\partial x^\beta\partial\eta_\beta}
\pm
\frac{t}{2}
\frac{\delta^{\alpha\beta}|\eta|^2-\eta^\alpha\eta^\beta}{|\eta|^3}
\frac{\partial^2}{\partial x^\alpha\partial x^\beta}
\right)
	E_\lambda{}^{\mu\nu}(x)\,
	\eta_\nu\,
	[b^{(\pm)}_\shortparallel]_\mu
	\right]_{x=x^{(\pm)}}.
\end{multline*}

\paragraph*{Simplification 7.}
We will seek the homogeneous components
\eqref{symbols of propagators U aleph}
of the symbols
of the time-dependent oscillatory integrals
\eqref{time-dependent oscillatory integrals}
in the form of Taylor expansions in $t$.

\subsection{Evaluation of Weyl coefficients}
\label{Evaluation of Weyl coefficients}

Formula~\eqref{local counting function mollified} now reads
\begin{multline}
\label{8 April 2026 equation 5}
(2\pi)^{-4}
\int
e^{\pm it(\lambda-|\eta|)}\,
[v^{(\pm)}]_\alpha{}^\alpha(t;0,\eta)\,
\widehat\mu(t)\,\chi(|\eta|)\,
\dr\eta\,\dr t
\\
=
c_2^\pm(0)\,\lambda^2
+
c_1^\pm(0)\,\lambda
+
c_0^\pm(0)
+
c_{-1}^\pm(0)\,\lambda^{-1}
+\dots
\quad
\text{as}
\quad
\lambda\to+\infty\,.
\end{multline}
Here, in order to derive the asymptotic expansion, one needs to decompose
the $v^{(\pm)}$ into components positively homogeneous in $\eta$ and
write the latter as Taylor expansions in $t$.
The function
$\chi:\mathbb{R}\to\mathbb{R}$
is a smooth cut-off such that
$\chi(r)=0$ for $r\le1/2$
and
$\chi(r)=1$ for $r\ge1$,
as in \cite[formula~(B.16)]{wave}.

The way to handle the integral in the LHS of \eqref{8 April 2026 equation 5}
is to integrate over Euclidean spheres $|\eta|=r$ first, which reduces the
derivation of the asymptotic expansion to the evaluation of integrals
\begin{equation}
\label{9 April 2026 equation 1}
\int_{-\infty}^{+\infty}
\left[
\int_{-\infty}^{+\infty}
t^n\,e^{\pm i(\lambda-r)t}\,
\widehat\mu(t)\,\dr t
\right]
r^m\,
\chi(r)\,\dr r\,,
\quad
m=2,1,0,-1,\dots,
\quad
n=0,1,2,\dots.
\end{equation}
Note that for any fixed $\lambda\in\mathbb{R}$ the quantity inside square brackets in
\eqref{9 April 2026 equation 1}
decays superpolynomially in $r$ as $r\to+\infty$.
Hence, the integral
\eqref{9 April 2026 equation 1}
makes sense.

Observe that
\[
t\,e^{\pm i(\lambda-r)t}=\pm i\frac\partial{\partial r}e^{\pm i(\lambda-r)t}\,,
\]
so integrating by parts in \eqref{9 April 2026 equation 1} we reduce,
modulo $O(\lambda^{-\infty})$, the evaluation of
\eqref{9 April 2026 equation 1}
to the evaluation of
\begin{equation*}
\label{9 April 2026 equation 2}
\int_{-\infty}^{+\infty}
\left[
\int_{-\infty}^{+\infty}
e^{\pm i(\lambda-r)t}\,
\widehat\mu(t)\,\dr t
\right]
r^m\,
\chi(r)\,\dr r\,,
\end{equation*}
for $m=2,1,0,-1,\dots$.
The latter integral can be equivalently rewritten as
\begin{equation*}
\label{10 April 2026 equation 1}
2\pi
\int_{-\infty}^{+\infty}
\mu(\lambda-r)\,
r^m\,
\chi(r)\,\dr r\,.
\end{equation*}

Our mollifier $\mu$ has the properties
\begin{equation}
\label{10 April 2026 equation 2}
\int_{-\infty}^{+\infty}\mu(s)\,\dr s=1\,,
\qquad
\int_{-\infty}^{+\infty}s^k\,\mu(s)\,\dr s=0\,,
\qquad
k=1,2,\dots.
\end{equation}
Arguing as in \cite[Lemma~3.2.1]{fang_PhD},
it is easy to see that \eqref{10 April 2026 equation 2} implies
\begin{equation}
\label{9 April 2026 equation 3}
\int_{-\infty}^{+\infty}
\mu(\lambda-r)\,
r^m\,
\chi(r)\,\dr r
=
\lambda^m
+
O(\lambda^{-\infty})
\quad
\text{as}
\quad
\lambda\to+\infty\,,
\end{equation}
for all $m\in\mathbb{Z}$.
Compare with \cite[page~1771]{wave}.

\section{The first two Weyl coefficients}
\label{The first two Weyl coefficients}

Application of the algorithm from
Section~\ref{The algorithm}
with account of the simplifications listed in
subsection~\ref{Simplifications}
gives
\begin{equation}
\label{8 April 2026 equation 1}
b^{(\pm)}_\shortparallel(t;0,\eta)
=
b^{(\pm)}(0,\eta)
+
O(t^2)\,,
\end{equation}
\begin{equation}
\label{8 April 2026 equation 2}
q^{(\pm)}_{0,\shortparallel}(t;0,\eta)
=
O(t)\,,
\end{equation}
\begin{equation}
\label{8 April 2026 equation 3}
v^{(\pm)}_0(t;0,\eta)
=
P^{(\pm)}(0,\eta)
+
O(t^2)\,,
\end{equation}
\begin{equation}
\label{8 April 2026 equation 4}
v^{(\pm)}_{-1}(t;0,\eta)
=
O(t)
\end{equation}
as $t\to0$.
Recall that by $P^{(\aleph)}$, $\aleph\in\{+,-,0\}$, we denote the the principal symbols of the projection operators $P_\aleph$ from \cite{curl}.
Formulae \eqref{8 April 2026 equation 1}--\eqref{8 April 2026 equation 4}
are an immediate consequence of the fact that there is no linear term in the expansion
\eqref{14 September 2021 equation 1bis}.


\

Formulae
\eqref{8 April 2026 equation 3}
and
\eqref{8 April 2026 equation 4}
allow us to evaluate the first two local Weyl coefficients
in accordance with the procedure described in
subsection~\ref{Evaluation of Weyl coefficients}.
We get
\begin{equation*}
\label{11 April 2026 equation 1}
c_2^\pm(0)
=
\frac{1}{2\pi^2}\,,
\end{equation*}
\begin{equation}
\label{11 April 2026 equation 2}
c_1^\pm(0)
=
0\,,
\end{equation}
which imply formulae~\eqref{Theorem main result equation 0} and~\eqref{Theorem main result equation 1} from Theorem~\ref{Theorem main result}. Note that formula \eqref{11 April 2026 equation 2} also follows from the results presented in
Appendix~\ref{Scaling the Riemannian manifold}. 

\section{Proof of Theorems~\ref{theorem 1}, \ref{Theorem main result 2} and~\ref{Theorem main result 3}}
\label{Proof of Theorems on Weyl asymptotics}

\begin{proof}[Proof of Theorem~\ref{theorem 1}]
In Sections \ref{The algorithm}--\ref{The first two Weyl coefficients}
we established that
\begin{equation}
\label{Proof of Theorems on Weyl asymptotics equation 1}
(N_\pm'*\mu)(y;\lambda)
=
\frac{1}{2\pi^2}\,\lambda^2
+
O(1)
\qquad \text{as}\qquad \lambda \to + \infty
\end{equation}
with remainder uniform over $y\in M$.
We also have
\begin{equation}
\label{Proof of Theorems on Weyl asymptotics equation 2}
(N_\pm'*\mu)(y;\lambda)
=
O(\lambda^{-\infty})
\qquad \text{as}\qquad \lambda \to - \infty
\end{equation}
with remainder also uniform over $y\in M$.
Formulae
\eqref{Proof of Theorems on Weyl asymptotics equation 1}
and
\eqref{Proof of Theorems on Weyl asymptotics equation 2}
imply
\begin{equation}
\label{Proof of Theorems on Weyl asymptotics equation 3}
(N_\pm*\mu)(y;\lambda)
=
\frac{1}{6\pi^2}\,\lambda^3
+
O(\lambda)
\qquad \text{as}\qquad \lambda \to + \infty\,.
\end{equation}
According to \cite[Corollary~B.2.2]{SaVa}
formulae
\eqref{Proof of Theorems on Weyl asymptotics equation 1}
and
\eqref{Proof of Theorems on Weyl asymptotics equation 3}
imply
\eqref{theorem 1 equation 1}.
\end{proof}

\begin{proof}[Proof of Theorem~\ref{Theorem main result 2}]
Let $\widehat{\gamma}:\mathbb{R}\to \mathbb{C}$,
be an arbitrary smooth compactly supported function such that
$\operatorname{supp} \widehat{\gamma}\subset(0,+\infty)$.
Arguing as in the proof of
\cite[Theorem 4.4.9]{SaVa}
we get
\begin{equation}
\label{Proof of Theorems on Weyl asymptotics equation 4}
(N_\pm'*\gamma)(y;\lambda)
=
o(\lambda^2)
\qquad \text{as}\qquad \lambda \to + \infty\,.
\end{equation}
According to \cite[Theorem~B.5.1]{SaVa}
formulae
\eqref{Proof of Theorems on Weyl asymptotics equation 1},
\eqref{Proof of Theorems on Weyl asymptotics equation 3}
and
\eqref{Proof of Theorems on Weyl asymptotics equation 4}
imply
\eqref{Theorem main result 2 equation 1}.
\end{proof}

\begin{proof}[Proof of Theorem~\ref{Theorem main result 3}]
Formulae
\eqref{Proof of Theorems on Weyl asymptotics equation 1}
and
\eqref{Proof of Theorems on Weyl asymptotics equation 2}
imply
\begin{equation}
\label{Proof of Theorems on Weyl asymptotics equation 5}
(N_\pm'*\mu)(\lambda)
=
\frac{\operatorname{Vol}M}{2\pi^2}\,\lambda^2
+
O(1)
\qquad \text{as}\qquad \lambda \to + \infty\,,
\end{equation}
\begin{equation}
\label{Proof of Theorems on Weyl asymptotics equation 6}
(N_\pm'*\mu)(\lambda)
=
O(\lambda^{-\infty})
\qquad \text{as}\qquad \lambda \to - \infty\,.
\end{equation}

Formulae
\eqref{Proof of Theorems on Weyl asymptotics equation 5}
and
\eqref{Proof of Theorems on Weyl asymptotics equation 6}
imply
\begin{equation}
\label{Proof of Theorems on Weyl asymptotics equation 7}
(N_\pm*\mu)(\lambda)
=
\frac{\operatorname{Vol}M}{6\pi^2}\,\lambda^3
+
O(\lambda)
\qquad \text{as}\qquad \lambda \to + \infty\,.
\end{equation}

Let $\widehat{\gamma}$ be as in the proof of Theorem~\ref{Theorem main result 2}.
Arguing as in the proof of
\cite[Theorem 4.4.1]{SaVa}
we get
\begin{equation}
\label{Proof of Theorems on Weyl asymptotics equation 8}
(N_\pm'*\gamma)(\lambda)
=
o(\lambda^2)
\qquad \text{as}\qquad \lambda \to + \infty\,.
\end{equation}
According to \cite[Theorem~B.5.1]{SaVa}
formulae
\eqref{Proof of Theorems on Weyl asymptotics equation 5},
\eqref{Proof of Theorems on Weyl asymptotics equation 7}
and
\eqref{Proof of Theorems on Weyl asymptotics equation 8}
imply
\eqref{Theorem main result 3 equation 1}.
\end{proof}

\section{The third Weyl coefficients}
\label{The third Weyl coefficient}

Formula
\eqref{14 September 2021 equation 1bis}
tells us that the third Weyl coefficients $c_0^\pm(0)$ will be proportional to
$\Riem_{\alpha\mu\beta\nu}(0)$, the Riemann curvature tensor at the origin of
our geodesic normal coordinate system.
And there is only one scalar that can be formed out of the
Riemann curvature tensor  --- scalar curvature.
See also Appendix~\ref{Scaling the Riemannian manifold}. Thus, without loss of generality, we introduce the following additional assumption.

\paragraph*{Simplification 8.}
Henceforth, we simplify calculations by assuming
that curvature at the origin is purely scalar, i.e.~that
\begin{equation}
\label{11 April 2026 equation 3}
\Riem_{\alpha\beta\gamma\delta}(0)
=
-
\frac{\Sc(0)}{6}
(
\delta_{\alpha\delta}\delta_{\beta\gamma}
-
\delta_{\alpha\gamma}\delta_{\beta\delta}
)\,.
\end{equation}
Formula
\eqref{11 April 2026 equation 3}
is obtained by substituting
\begin{equation*}
\label{29 April 2026 equation 1}
\Ric_{\alpha\beta}(0)=\frac13\delta_{\alpha\beta}\Sc(0)
\end{equation*}
and
$g_{\alpha\beta}(0)=\delta_{\alpha\beta}$
into
\eqref{14 September 2021 equation 5}.

\

Substituting
\eqref{11 April 2026 equation 3}
into
\eqref{14 September 2021 equation 1bis}
we get
\begin{equation}
\label{12 April 2026 equation 1}
g_{\alpha\beta}(x)
=
\delta_{\alpha\beta}
-
\frac{\Sc(0)}{18}
(
\delta_{\alpha\beta}\delta_{\mu\nu}
-
\delta_{\alpha\mu}\delta_{\beta\nu}
)\,x^\mu\,x^\nu
+O(|x|^3)\,.
\end{equation}
Formula
\eqref{12 April 2026 equation 1}
is the starting point in the calculation of the third Weyl coefficient.

An additional simplification is the observation that in view of spherical symmetry it suffices to perform calculations only at the north pole, i.e.~at
\begin{equation}
\label{12 April 2026 equation 2}
\eta_\alpha
=
\begin{pmatrix}
0\\
0\\
Z
\end{pmatrix},
\qquad
Z>0\,.
\end{equation}

Application of the algorithm from
Section~\ref{The algorithm}
with account of simplifications listed in
sub\-section~\ref{Simplifications}
as well as
\eqref{12 April 2026 equation 1}
and
\eqref{12 April 2026 equation 2}
gives
\begin{equation}
\label{12 April 2026 equation 3}
[b^{(\pm)}_\shortparallel]_\alpha(t;0,\eta)
=
\frac{1}{\sqrt{2}}
\left(
1
-
\frac{\Sc(0)\,t^2}{36}
\right)
\begin{pmatrix}
1\\
\pm i\\
0
\end{pmatrix}
+
O(t^3)\,,
\end{equation}
\begin{equation}
\label{12 April 2026 equation 4}
q^{(\pm)}_{0,\shortparallel}(t;0,\eta)
=
\frac{i\Sc(0)\,t}{18}
+
O(t^2)\,,
\end{equation}
\begin{equation}
\label{12 April 2026 equation 5}
[v^{(\pm)}_0]_\alpha{}^\beta(t;0,\eta)
=
\frac{1}{2}
\begin{pmatrix}
1&\mp i&0\\
\pm i&1&0\\
0&0&0
\end{pmatrix}
+
O(t^3)\,,
\end{equation}
\begin{equation}
\label{12 April 2026 equation 6}
[v^{(\pm)}_{-1}]_\alpha{}^\beta(t;0,\eta)
=
\frac{\Sc(0)\,t}{36Z}
\begin{pmatrix}
\mp i&-1&0\\
1&\mp i&0\\
0&0&\mp 2i
\end{pmatrix}
+
O(t^2)\,,
\end{equation}
and
\begin{equation}
\label{12 April 2026 equation 7}
[v^{(\pm)}_{-2}]_\alpha{}^\beta(t;0,\eta)
=
\frac{\Sc(0)}{36Z^2}
\begin{pmatrix}
0&\pm i&0\\
\mp i&0&0\\
0&0&-2
\end{pmatrix}
+
O(t)
\end{equation}
as $t\to0$.
Compare with formulae
\eqref{8 April 2026 equation 1}--\eqref{8 April 2026 equation 4}.
In the right-hand sides of formulae
\eqref{12 April 2026 equation 5}--\eqref{12 April 2026 equation 7}
we employed matrix notation
with the first index $\alpha$ enumerating rows and the second $\beta$ enumerating columns.

Formulae
\eqref{12 April 2026 equation 5}--\eqref{12 April 2026 equation 7} were written at the north pole \eqref{12 April 2026 equation 2}. For an arbitrary $\eta\in\mathbb{R}^3\setminus\{0\}$ they read
\begin{equation}
\label{8 May 2026 equation 1}
v^{(\pm)}_0(t;0,\eta)
=
P^{(\pm)}(0,\eta)
+
O(t^3)\,,
\end{equation}
\begin{equation}
\label{8 May 2026 equation 2}
v^{(\pm)}_{-1}(t;0,\eta)
=
\mp
\frac{i\Sc(0)\,t}{18|\eta|}
\left[
P^{(\pm)}(0,\eta)
+
P^{(0)}(0,\eta)
\right]
+
O(t^2)\,,
\end{equation}
\begin{equation}
\label{8 May 2026 equation 3}
v^{(\pm)}_{-2}(t;0,\eta)
=
-
\frac{\Sc(0)}{36|\eta|^2}
\left[
P^{(\pm)}(0,\eta)
-
P^{(\mp)}(0,\eta)
+
2
P^{(0)}(0,\eta)
\right]
+
O(t)\,.
\end{equation}

Formulae
\eqref{8 May 2026 equation 1}--\eqref{8 May 2026 equation 3}
allow us to evaluate the third Weyl coefficient
in accordance with the procedure described in
subsection~\ref{Evaluation of Weyl coefficients}.
We get
\begin{equation*}
\label{8 May 2026 equation 4}
c_0^\pm(0)
=
-\frac{1}{12\pi^2}\,\Sc(0)\,,
\end{equation*}
which implies formula~\eqref{Theorem main result equation 2} from Theorem~\ref{Theorem main result}.

\

\begin{remark}
	Let us compare the first three local Weyl coefficients of \(\curl\), \(c^\pm_n(y)\),
	\(n=2,1,0\), with those of the Laplace--Beltrami operator \(\Delta\) on the same Riemannian 3-manifold \((M,g)\). A similar comparison involving the massless Dirac operator was undertaken in \cite[Remark~8.2]{dirac}.
	
	Let \(N(y;\lambda)\) be the local counting function of \(\sqrt{-\Delta}\). Then, as
	\(\lambda\to+\infty\),
\begin{equation*}
\label{Laplace-Beltrami expansion}
	\left(N'\ast \mu \right)(y;\lambda) = c_2(y)\,\lambda^2 + c_1(y)\,\lambda + c_0(y) + O(\lambda^{-1})\,.
\end{equation*}
Hence, \cite[Theorem~B.2]{wave} and formulae
\eqref{Theorem main result equation 0}--\eqref{Theorem main result equation 2}
imply
	\begin{equation*}
	\label{comparing Weyl coefficients}
c^\pm_2(y)=c_2(y)\,,
\qquad
c^\pm_1(y)=c_1(y)=0\,,
\qquad
c^\pm_0(y)=-2\,c_0(y)\,.
\end{equation*}
\end{remark}

\

\begin{remark}
\label{remark on right symbols of projections}

Setting $t=0$ in formulae
\eqref{8 May 2026 equation 1}--\eqref{8 May 2026 equation 3}
we arrive at the following expression for the right symbols of projection operators $P_\pm$:
\begin{equation}
\label{4 June 2026 equation 1}
P^{(\pm)}(0,\eta)
-
\frac{\Sc(0)}{36|\eta|^2}
\left[
P^{(\pm)}(0,\eta)
-
P^{(\mp)}(0,\eta)
+
2
P^{(0)}(0,\eta)
\right]
+O(|\eta|^{-3})\,.
\end{equation}
As we are working in geodesic normal coordinates, the right symbols of
projection operators $P_+$, $P_-$ and $P_0$ add up to the identity matrix.
Hence, formula \eqref{4 June 2026 equation 1} implies the following expression for the right symbol of the projection operator $P_0$
\begin{equation}
\label{5 June 2026 equation 1}
\left(
1
+
\frac{\Sc(0)}{9|\eta|^2}
\right)
P^{(0)}(0,\eta)
+
O(|\eta|^{-3})\,.
\end{equation}
Recall that according to \cite[formula~(1.8)]{curl} we have
$[P^{(0)}]_\alpha{}^\beta(0,\eta)=|\eta|^{-2}\eta_\alpha\eta^\beta$,
hence, the matrix-function \eqref{5 June 2026 equation 1} is rank 1.
This is in agreement with Remark~\ref{remark P0 is a rank 1 matrix-function}.

An alternative way of deriving formulae
\eqref{4 June 2026 equation 1}
and
\eqref{5 June 2026 equation 1}
is by means of the algorithm from \cite[subsection.~4.3]{part1}.
See also \cite[Proposition~5.3]{curl} for a detailed explanation of
how the algorithm from \cite[subsection.~4.3]{part1} works in the specific case
of the operator $\curl$. The advantage of this alternative approach is that it allows
one to calculate the full right symbols of projection operators
$P_\aleph\,$, $\aleph\in\{+,-,0\}$, without solving transport equations.

\end{remark}

\section{Higher Weyl coefficients}
\label{Higher Weyl coefficients}

\subsection{The fourth Weyl coefficients}
\label{The fourth Weyl coefficients}

Results presented in
Appendix~\ref{Scaling the Riemannian manifold}
immediately imply that the fourth local Weyl coefficients, $c_{-1}^\pm(y)$, are zero, so that we have formula~\eqref{Theorem main result equation 3} from Theorem~\ref{Theorem main result}.

\subsection{The fifth Weyl coefficients}
\label{The fifth Weyl coefficients}

Results presented in
Appendix~\ref{Scaling the Riemannian manifold} and~Theorem~\ref{theorem symmetries of Weyl coefficients}
imply that the fifth local Weyl coefficients, $c_{-2}^\pm(y)$, are symmetric,
$c_{-2}^+(y)=c_{-2}^-(y)\,$,
and are linear combinations of three scalar geometric invariants
$\Sc^2(y)$, $\|\oRic(y)\|^2$ and $(\Delta\Sc)(y)\,$.

Let us consider the special case of the round sphere,
see subsection~\ref{The round sphere}.
Formula \eqref{Arguing as in formula (9.19) dirac}
tells us that $\Sc^2(y)$ does not appear in the expression for the
fifth local Weyl coefficients,
which leaves us with two
scalar geometric invariants,
$\|\oRic(y)\|^2$ and $(\Delta\Sc)(y)\,$.

Finally, let us consider the special case of the Berger sphere,
see subsection~\ref{The Berger sphere}.
Examination of formulae
\eqref{local counting function expansion Berger}
and
\eqref{Berger traceless Ricci squared}
yields formula~\eqref{Theorem main result equation 4} from Theorem~\ref{Theorem main result}.

\subsection{The sixth Weyl coefficients}
\label{The sixth Weyl coefficients}

This is the first instance where we observe spectral asymmetry in spectral asymptotics.

Results presented in
Appendix~\ref{Scaling the Riemannian manifold} and~Theorem~\ref{theorem symmetries of Weyl coefficients}
imply that the sixth local Weyl coefficients, $c_{-3}^\pm(y)$, are antisymmetric,
$c_{-3}^+(y)=-c_{-3}^-(y)\,$,
and read
\begin{equation}
\label{The sixth Weyl coefficients equation 1}
c_{-3}^\pm(y)
=
\pm\,C\,
E^{\alpha\beta\gamma}(y)\,\oRic_{\alpha\mu}(y)\,\nabla_\beta\,\oRic_\gamma{}^\mu(y)
\,,
\end{equation}
where $C$ is some universal constant.
Examination of formulae
\eqref{local counting function expansion Berger}
and
\eqref{Berger pseudoscalar invariant}
gives us
\begin{equation}
\label{The sixth Weyl coefficients equation 2}
c_{-3}^+(y)-c_{-3}^-(y)
=
-\frac{1}{60\pi^2}\,
E^{\alpha\beta\gamma}(y)\,\oRic_{\alpha\mu}(y)\,\nabla_\beta\,\oRic_\gamma{}^\mu(y)\,.
\end{equation}
Formulae
\eqref{The sixth Weyl coefficients equation 1}
and
\eqref{The sixth Weyl coefficients equation 2}
imply that $\displaystyle C=-\frac{1}{120\pi^2}\,$.
This proves formula~\eqref{Theorem main result equation 5} from Theorem~\ref{Theorem main result}.

\section{Examples}
\label{Examples}

In this section, we discuss two examples for which the spectrum of $\curl$ is known explicitly: the round 3-sphere and the Berger sphere. Although the round sphere is a special case of the Berger sphere, we have opted to treat the two separately in view of the substantial difference in their technical complexity. In a way, the former is a warm-up for the latter.

\subsection{The round sphere}
\label{The round sphere}
According to
\cite[formulae (D.11) and (D.12)]{curl}
and
\cite[Theorem~5.2]{baer_curl},
the eigenvalues of $\curl$ on the round sphere $(\mathbb{S}^3,g_1)$ are
\begin{equation}
\label{spectrum curl 3 sphere eigenvalues}
\pm k\,, \qquad k=2,3,\dots,
\end{equation}
with multiplicity 
\begin{equation}
\label{spectrum curl 3 sphere multiplicity}
k^2-1\,.
\end{equation}
In particular, the spectrum is symmetric about $0$.

Formulae~\eqref{spectrum curl 3 sphere eigenvalues}, \eqref{spectrum curl 3 sphere multiplicity}, \cite[formulae~(8.2)~and~(8.3)]{dirac}, and the fact that
$\operatorname{Vol}_{g_1}\mathbb{S}^3 = 2\pi^2$
imply
\begin{align*}
[N_\pm'\ast{\mu}](y,\lambda)
&=
\mathcal{F}^{-1}_{t\to \lambda}
\left[
\frac1{2\pi^2}
\sum_{k=2}^\infty e^{-ikt}\,(k^2-1)\,\widehat{\mu}(t)
\right]
\\
&=
\frac{1}{4\pi^3}
\sum_{k=2}^{+\infty}
\int_{-\infty}^{+\infty}
e^{it(\lambda-k)}\,(k^2-1)\,\widehat{\mu}(t)\,\dr t
\\
&=
\frac{1}{4\pi^3}
\sum_{k=-\infty}^{+\infty}
\int_{-\infty}^{+\infty}
e^{it(\lambda-k)}\,(k^2-1)\,\widehat{\mu}(t)\,\dr t
+
O(\lambda^{-\infty})
\\
&=
\frac{\lambda^2-1}{2\pi^2}
+
O(\lambda^{-\infty})
=
\frac{1}{2\pi^2}
\,
\lambda^2
-
\frac{1}{12\pi^2}\,\Sc(y)
+
O(\lambda^{-\infty})\,,
\stepcounter{equation}\tag{\theequation}\label{Arguing as in formula (9.19) dirac}
\end{align*}
where $\Sc(y)=6$ is the scalar curvature of the standard round sphere.

It is noteworthy that only two local Weyl coefficients do not vanish for the round sphere, namely~$c^\pm_2(y)$ and~$c^\pm_0(y)$.
In principle, there could have been more non-vanishing coefficients, involving powers of scalar curvature. In particular, \eqref{Arguing as in formula (9.19) dirac} confirms that a term proportional $\Sc^2(y)$ does not appear in the fifth local Weyl coefficient of $\curl$.

\subsection{The Berger sphere}
\label{The Berger sphere}

For the definition and a discussion of the elementary properties of the Berger sphere $(\mathbb{S}^3,g_a)$ with parameter $a>0$ we refer the reader to \cite[Appendices~C~and~D]{curl}. As per \cite[Theorem~D.1]{curl}, the spectrum of \(\curl\) on the Berger sphere is the (disjoint) union of the following four sequences of eigenvalues:
\begin{enumerate}
	\item
	Eigenvalues
	\begin{equation}
	\label{eq:lambda_I}
	\frac{n}{a}\,,\qquad
	n=2,3,\dotsc,
	\end{equation}
	with multiplicity \(2n-2\).
	\item
	Eigenvalues
	\begin{equation}
	\label{eq:lambda_II}
	\frac{n+2(a^2-1)}{a}\,,\qquad
	n=2,3,\dotsc.
	\end{equation}
	Here the multiplicity is as follows.
	\begin{enumerate}
		\item
		If $n=2$ the multiplicity is $1$.
		\item
		If $n=3,4,\dots$ the multiplicity is $2n-2$.
	\end{enumerate}
	\item
	Eigenvalues
	\begin{equation}
	\label{eq:lambda_III}
	a
	+
	\sqrt{
		a^2
		+
		n(n+2)
		+
		4\left( 
		a^{-2}-1
		\right)
		\left(
		q-\frac n 2
		\right)^2
	}\ ,
	\end{equation}
	with \(n=2,3,\dotsc\) and \(q=1,\dotsc,n-1\). Here the multiplicity is \(n+1\) independently of \(q\).
	\item
	Eigenvalues
	\begin{equation}
	\label{eq:lambda_IV}
	a
	-
	\sqrt{
		a^2
		+
		n(n+2)
		+
		4\left( 
		a^{-2}-1
		\right)
		\left(
		q - \frac n 2
		\right)^2
	}\ ,
	\end{equation}
	with \(n=2,3,\dotsc\) and \(q=1,\dotsc,n-1\). Here the multiplicity is \(n+1\) independently of \(q\).
\end{enumerate}
Sequences I, II, and III form the positive spectrum, whereas Sequence IV constitutes the negative spectrum. If \(a=1\), the metric \(g_1\) is the round metric and the spectrum reduces to \eqref{spectrum curl 3 sphere eigenvalues}, \eqref{spectrum curl 3 sphere multiplicity}. Note that the labelling of eigenvalues in the third and fourth families differs here from the presentation in \cite[Theorem~D.1]{curl}.

Let us introduce local counting functions \(N_\mathrm{I}(y;\lambda)\), \(N_\mathrm{II}(y;\lambda)\), and \(N_\mathrm{III}(y;\lambda)\) corresponding to the three families of eigenvalues \eqref{eq:lambda_I}--\eqref{eq:lambda_III}.

Arguing as in the previous subsection and using the fact that $\operatorname{Vol}_{g_a}\mathbb{S}^3 = 2\pi^2a$ one obtains for the first two families
\begin{align*}
\left(N_\mathrm{I}'\ast{\mu}\right)(y;\lambda)
&=
\mathcal{F}^{-1}_{t\to \lambda}
\left[ \frac{1}{2\pi^2a}
\sum_{n=2}^{+\infty}(2n-2) \, e^{- it \frac n a }\,\widehat{\mu}(t)
\right]
\\
&= \frac{1}{2\pi^3} \sum_{n=-\infty}^{+\infty}\int_{-\infty}^{+\infty} (n-1)\, e^{i (a\lambda - n)t}\, \widehat\mu(at)\,\dr t + O(\lambda^{-\infty}) \\
&= \frac{a\lambda-1}{\pi^2} + O(\lambda^{-\infty}) = \frac{a}{\pi^2}\,\lambda - \frac{1}{\pi^2} + O(\lambda^{-\infty}) \stepcounter{equation}\tag{\theequation}\label{4 January 2026 equation 9}
\end{align*}
and
\begin{align*}
\left(N_\mathrm{II}'\ast{\mu}\right)(y;\lambda)
&=
\mathcal{F}^{-1}_{t\to \lambda}
\left[ \frac{1}{2\pi^2a}
\sum_{n=2}^\infty(2n-2) \, e^{- it\frac{n+2(a^2-1)}{a} }\,\widehat{\mu}(t)
\right]+ O(\lambda^{-\infty}) \\
&= \frac{1}{2\pi^3} \sum_{n=-\infty}^\infty\int_{-\infty}^{+\infty}(n-1)\, e^{i (a\lambda - 2(a^2-1) - n )t}\,\widehat\mu(at)\,\dr t + O(\lambda^{-\infty})\\
&= \frac {a\lambda - 2(a^2-1) - 1}{\pi^2}  + O(\lambda^{-\infty})
= \frac{a}{\pi^2}\,\lambda + \frac{1-2a^2}{\pi^2} + O(\lambda^{-\infty}) \,.\stepcounter{equation}\tag{\theequation}\label{4 January 2026 equation 10}
\end{align*}

As to the third and fourth families, their examination requires considerably more effort owing to the radical expression appearing in \eqref{eq:lambda_III} and \eqref{eq:lambda_IV} and the presence of an additional summation over the index \(q=1,\dotsc,n-1\).

Let us introduce the auxiliary function of the variables \((u;t)\in[0,1]\times \mathbb{R}\)
\begin{equation*}
	f_n(u;t) :=  e^{-i t \sqrt{a^2 + 2n + n^2\varphi_1^2(u)}} \qquad \text{with}\quad \varphi_1(u) := \left[1 + 4(a^{-2}-1)\left(u-\frac 1 2\right)^2\right]^{1/2}\,,
\end{equation*}
so that
\begin{equation}
\label{10 April 2026 equation 4}
\left(N_\mathrm{III}'\ast{\mu}\right)(y;\lambda)
= \frac{1}{4\pi^3a}\sum_{n=2}^{+\infty} \int_{-\infty}^{+\infty}(n+1)\, e^{i(\lambda - a)t}\sum_{q=1}^{n-1} f_n\left(\frac q n;t\right) \widehat{\mu}(t)\,\dr t\,.
\end{equation}

Note that the mollified derivative of the \emph{negative} local counting function \(N_-(y;\lambda)\) admits an expansion identical to \eqref{10 April 2026 equation 4}, but with \(e^{i(\lambda - a)t}\) replaced by \(e^{i(\lambda + a)t}\). It can thus be treated in essentially the same manner, with the appropriate adjustments, and we omit the details.

In order to do away with the sum over \(q\) in \eqref{10 April 2026 equation 4}, 
we approximate it with its Euler--Maclaurin expansion. For fixed \(n\geq 2\) and \(t\) in the support of \(\widehat\mu\), we have
\begin{equation}
\label{10 April 2026 equation 5 infinite}
\sum_{q=1}^{n-1} f_n\!\left(\frac q n;t\right)
=
\mathcal B_n(t)-\mathcal T_n(t)+\mathcal I^{(0)}_n(t)
-\mathcal I^{(1)}_n(t) + \mathcal I^{(3)}_n(t) - \mathcal R^{(4)}_n(t)\,,
\end{equation}
where
\begin{equation}
\label{Euler-Maclaurin first few terms}
	\mathcal{B}_n(t) := n\int_0^1 f_n\!\left(u;t\right)\dr u\,, \qquad \mathcal{T}_n(t) := 2\int_0^1 f_n\!\left(\frac q n;t\right) \dr q\,, \qquad \mathcal{I}^{(0)}_n(t) := f_n\!\left(\frac 1 n;t\right), 
\end{equation}
\begin{equation}
\label{Euler-Maclaurin endpoint derivative terms}
	\mathcal I^{(1)}_n(t) := \frac{1}{6n}\,
	f_n'\!\left(\frac 1 n;t\right), \qquad \mathcal I^{(3)}_n(t) := \frac{1}{360n^3}\,
	f_n'''\!\left(\frac 1 n;t\right)\,,
\end{equation}
and
\begin{equation}
\label{Euler-Maclaurin remainder term}
	\mathcal R^{(4)}_n(t) = \frac{1}{24n^3}\int_{1/n}^{1-1/n} f_n''''(u;t)\,B_4(\{nu\})\,\dr u\,.
\end{equation}
Here \(B_k(x)\) denotes the \(k\)-th Bernoulli polynomial and \(\{x\}\) denotes the fractional part of the real number \(x\). In formulae \eqref{Euler-Maclaurin endpoint derivative terms} and \eqref{Euler-Maclaurin remainder term} primes denote differentiations with respect to \(u\).

Inserting formula \eqref{10 April 2026 equation 5 infinite} into \eqref{10 April 2026 equation 4}, one obtains a decomposition of \(\left(N_\mathrm{III}'\ast{\mu}\right)(y;\lambda)\) as a finite sum of certain functions of \(\lambda\):
\begin{equation*}
\label{certain functions of lambda}
\left(N_\mathrm{III}'\ast{\mu}\right)(y;\lambda)
= B(\lambda) - T(\lambda) + I^{(0)}(\lambda) - I^{(1)}(\lambda) + I^{(3)}(\lambda) - R^{(4)}(\lambda)\,.
\end{equation*}
One can then examine each contribution individually, making use of the following facts.
\begin{enumerate}[(i)]
	\item One has the Taylor expansion
	\begin{equation}
	\label{root expansion}
	\sqrt{a^2 + 2n + n^2\varphi_1^2(u)} = n\,\varphi_1(u) + \varphi_0(u) + \frac{\varphi_{-1}(u)}{n} + \frac{\varphi_{-2}(u)}{n^2} + \cdots,
	\end{equation}
	which is valid as \(n\to+\infty\) uniformly in \(u\). In \eqref{root expansion} the coefficients \(\varphi_j(u)\), \(j=0,-1,\dotsc,\) are certain rational functions of \(a\) and \(\varphi_1(u)\) --- for example, \(\varphi_0(u) = [\varphi_1(u)]^{-1}\). 
	\item Let \(\alpha \in C^\infty([0,1])\) be bounded away from \(0\), that is, \(\alpha_*\coloneqq \min_{[0,1]}\alpha > 0\), and suppose \(\operatorname{supp}\widehat\mu \subset (-2\pi\alpha_*,2\pi\alpha_*)\). If \(m\in\mathbb{Z}\) and \(\ell\in\mathbb{N}\), then we have the asymptotic expansion
	\begin{equation}
	\label{distributional identity}
	\frac{1}{2\pi}\sum_{n=2}^{+\infty} \int_{-\infty}^{+\infty} n^m\,(it)^\ell\, e^{it\left(\lambda - \frac n {\alpha(u)}\right)}\,\widehat\mu(t)\,\dr t = (m)_\ell\,
	[\alpha(u)]^{m+1}\,\lambda^{m-\ell} + O(\lambda^{-\infty})
	\end{equation}
	 as \(\lambda\to+\infty\) uniformly in \(u\). Compare with \eqref{9 April 2026 equation 3}. In \eqref{distributional identity} the symbol \((m)_\ell \coloneqq m(m-1) \cdots (m-\ell+1)\) denotes the falling factorial.
\end{enumerate}
In view of \eqref{root expansion} and \eqref{distributional identity}, upon careful examination of the powers of \(t\) and \(n\) arising from \eqref{Euler-Maclaurin first few terms}--\eqref{Euler-Maclaurin remainder term}, each contribution \(B(\lambda)\), \(T(\lambda)\), and \(I^{(j)}(\lambda)\) with \(j=0,1,3\), may be expressed as an asymptotic expansion in inverse powers of \(\lambda\) with remainder \(O(\lambda^{-4})\) as \(\lambda\to+\infty\). With some effort, it is also possible to show that
\begin{equation*}
\label{remainder term estimate}
	R^{(4)}(\lambda) = O(\lambda^{-4}) \qquad \text{as} \qquad \lambda \to +\infty\,.
\end{equation*}
Thus, taking into account formulae \eqref{4 January 2026 equation 9} and \eqref{4 January 2026 equation 10}, one eventually arrives at
\begin{equation}
\label{local counting function expansion Berger}
	\left(N_\pm'\ast{\mu}\right)(y;\lambda) = \frac{1}{2\pi^2}\,\lambda^2 - \frac{4-a^2}{6\pi^2} - \frac{2(a^2-1)^2}{15\pi^2}\,\lambda^{-2} \mp \frac{4a(a^2-1)^2}{15\pi^2}\,\lambda^{-3} + O(\lambda^{-4})\,.
\end{equation}
Since we have
\begin{subequations}
\begin{equation}
	\Sc(y) = 8 - 4a^2\,, \label{Berger scalar curvature}
	\end{equation}
	\begin{equation}
	\|\oRic(y)\|^2  = \frac{32}{3}(a^2-1)^2\,, \label{Berger traceless Ricci squared}
		\end{equation}
	\begin{equation}
	E_\alpha{}^{\beta\gamma}(y)\,\oRic^{\alpha\mu}(y)\,\nabla_\beta\,\oRic_{\gamma\mu}(y) = 32a(a^2-1)^2\,,\label{Berger pseudoscalar invariant}
\end{equation}
\end{subequations}
formula \eqref{local counting function expansion Berger} is consistent with the results from Sections~\ref{The first two Weyl coefficients}, \ref{The third Weyl coefficient}, and \ref{Higher Weyl coefficients}.

\begin{remark}
	\label{eta function Berger sphere via Hitchin's trick}
	It is possible to compute the residue of the first pole at $s=-2$ of the \emph{global} eta function of \(\curl\) on the Berger sphere in an alternative way, arguing as in \cite[subappendix~D.2]{curl}. Indeed, the eta function of \(\curl\) on the Berger sphere admits the representation
	\begin{equation*}
	\label{eta representation}
	\eta(s) = \theta(s) + (2a)^{-s} + 4a^s\zeta(s-1)\,,
	\end{equation*}
	where \(\zeta(s) = \sum_{n=1}^{\infty} n^{-s}\) is the Riemann zeta function, 
\begin{equation*}
\label{eta curl dima 2}
\theta(s)
:=
\sum_{j=1}^\infty
\left[
\left(
\sqrt{a^2+\mu_j}
\,+\,a
\right)^{-s}
-
\left(
\sqrt{a^2+\mu_j}
\,-\,a
\right)^{-s}
\right],
\end{equation*}
and the $\mu_j$ are the eigenvalues of $\,-\Delta\,$ enumerated in increasing order with account of multiplicities.	 Since \(\zeta(s)\) only exhibits a pole at \(s=1\), it suffices to compute the residue of \(\theta(s)\) at \(s=-2\). Borrowing ideas form arguments due to Hitchin, see~\cite[p.~34]{hitchin}, one derives the following representation for \(\theta(s)\):
	\begin{multline}
	\label{theta expansion}
	\theta(s) = -2s\,a\,\zeta_{\sqrt{-\Delta}}(s+1) + \frac{s(1-s^2)}{3}\, a^3 \,\zeta_{\sqrt{-\Delta}}(s+3)
	\\- \frac{s(1-s^2)(9-s^2)}{60} \,a^5\,\zeta_{\sqrt{-\Delta}}(s+5) + H(s)\,,
	\end{multline}
	where \(\zeta_{\sqrt{-\Delta}}(s)\) is the operator zeta function of \(\sqrt{-\Delta}\) and the remainder \(H(s)\) is holomorphic in the half-plane \(\operatorname{Re}(s)>-4\).
	One can show that \(\zeta_{\sqrt{-\Delta}}(s)\) admits, on a closed \(3\)-manifold, the expansion
	\begin{multline}
	\label{7 March 2026 equation 24}
	\zeta_{\sqrt{-\Delta}}(s) = \frac{1}{2\pi^2}\,\Bigg[\frac{\operatorname{Vol} M}{s-3}+\frac{1}{12(s-1)}\int_M\Sc(x)\,\rho(x)\,\dr x
	\\ -\frac{1}{1440(s+1)}\int_M\left[5\Sc(x)^2 + 6\,\|\oRic(x)\|^2\right]\rho(x)\,\dr x \Bigg] + K(s)\,,
	\end{multline}
	with \(K(s)\) holomorphic in \(\operatorname{Re}(s)>-2\). Using formulae \eqref{Berger scalar curvature} and \eqref{Berger traceless Ricci squared}, one may specialise \eqref{7 March 2026 equation 24} to the Berger sphere. Plugging the resulting formula into \eqref{theta expansion} and computing the residue at \(s=-2\) yields
	\begin{equation*}
	\label{global eta residue -2}
		\Res(\eta,-2) = -\frac{16}{15}a^2(a^2-1)^2\,,
	\end{equation*}
	which agrees with the corresponding formula~\eqref{local eta residue -2} from Section~\ref{Statement of the problem and main results}.
\end{remark}

\section*{Acknowledgements}
\addcontentsline{toc}{section}{Acknowledgements}

GB was supported by a UCL Research Excellence Scholarship.
MC was supported by EPSRC Fellowship EP/X01021X/1. MC is a member of the
GNAMPA research group of Indam, the Italian National Institute for Higher Mathematics. The
authors would like to thank the Isaac Newton Institute for Mathematical Sciences, Cambridge, for
support and hospitality during the programme Geometric Spectral Theory and Applications, where
work on this paper was undertaken. This work was supported by EPSRC grant EP/Z000580/1.

\begin{appendices}

\section{The amplitude-to-symbol operator}
\label{The amplitude-to-symbol operator}

In this appendix we present a concise account of the amplitude reduction procedure mentioned at the beginning of Step 6 of the algorithm from Section~\ref{The algorithm}.

\

Let $M$ be a $d$-dimensional manifold and
let $\tilde w \in C^\infty(\mathbb{R}\times M \times (T^*M\setminus \{0\}))$ be a polyhomogeneous function of order $r$,
\begin{equation*}
	\tilde w \sim \sum_{k=0}^{+\infty}\tilde w_{r-k}\,,
\end{equation*}
where each component $\tilde w_{r-k}$ is positively homogeneous in $\eta$ of degree $r-k$. We call $\tilde w$ an \emph{amplitude}.
Furthermore, let $\varphi \in C^\infty(\mathbb{R}\times M \times (T^*M\setminus \{0\}))$ be compatible with the flow $(x^*(t;y,\eta),\xi^*(t;y,\eta))$ associated with a given Hamiltonian function $h:T^*M\setminus\{0\}\to\mathbb{R}$ positively homogeneous of degree~1 in the momentum variable. Here `compatibility' is understood in the sense of Step 2 from the algorithm in Section~\ref{The algorithm}.
Finally, let
$\chi \in C^\infty(\mathbb{R}\times M \times (T^*M\setminus \{0\}))$
be a cut-off around the singularity $\varphi_\eta=0$ and away from the zero section.
Then the oscillatory integral
\begin{equation*}
	(2\pi)^{-d}\int e^{i\varphi(t,x;y,\eta)}\,\tilde w(t,x;y,\eta)\,\chi(t,x;y,\eta)\,\dr\eta
\end{equation*}
is equivalent, modulo an infinitely smooth contribution, to the oscillatory integral
\begin{equation*}
	(2\pi)^{-d}\int e^{i\varphi(t,x;y,\eta)}\,w(t;y,\eta)\,\chi(t,x;y,\eta)\,\dr\eta\,,
\end{equation*}
for some polyhomogeneous function $w \in C^\infty(\mathbb{R}\times (T^*M\setminus \{0\}))$ of order $r$.
We call $w$ the \emph{symbol}, more specifically, the \emph{right} symbol in view of its $x$-independence. The \emph{amplitude-to-symbol operator} is the linear operator
\begin{equation*}
	\mathfrak{S} : C^\infty(\mathbb{R}\times M\times (T^*M\setminus \{0\})) \to C^\infty(\mathbb{R}\times (T^*M\setminus \{0\}))
\end{equation*}
that maps $\tilde w \xmapsto{\mathfrak{S}} w$.
Following a procedure analogous to the one detailed in \cite[App.~A]{wave}, one shows that $\mathfrak{S}$ admits an asymptotic expansion in operators that decrease the degree of homogeneity by $k$,
\begin{equation}
\label{22 April 2026 equation 1}
	\mathfrak{S} \sim \sum_{k=0}^{+\infty}\mathfrak{S}_{-k}\,,
\end{equation}
where
\begin{equation}
\label{S zero}
	\mathfrak{S}_0 = \left.\left(\,\cdot\,\right)\right|_{x=x^*(t,y,\eta)}
\end{equation}
and
\begin{equation}
\label{S minus k}
	\mathfrak{S}_{-k} = \mathfrak{S}_0 \left[i\sum_{j=0}^{2k-1}\frac{(-1)^j}{j+1}\,\frac{\partial}{\partial\eta_\beta}\,F_j\,L_\beta\right]^k\,.
\end{equation}

In formula \eqref{S minus k}
\begin{enumerate}
\item 
each operator is understood to act on all  terms to right thereof,
\item
we make use of the operators
\begin{equation}
\label{operator L alpha}
	L_\beta
	:=
	\psi_\beta{}^\alpha\,
	\frac{\partial}{\partial x^\alpha}\,,
\end{equation}
where $\psi_\beta{}^\alpha(t,x;y,\eta)$ is the matrix inverse of $(\varphi_{x\eta})_\alpha{}^\beta(t,x;y,\eta) := \varphi_{x^\alpha\eta_\beta}(t,x;y,\eta)$, that is,
$(\varphi_{x\eta})_\alpha{}^\beta\,\psi_\beta{}^\gamma=\delta_\alpha{}^\gamma\,$, and
\item
the operators $F_j$ are defined in accordance with 
\begin{equation}
\label{operator F j}
	F_0 := 1\,, \qquad\qquad F_j := \frac{1}{j!}\,\frac{\partial\varphi}{\partial\eta_{\sigma_1}}\cdots\frac{\partial\varphi}{\partial\eta_{\sigma_j}}\,L_{\sigma_1}\cdots L_{\sigma_j}\,.
\end{equation}
Note that, although $\psi_\alpha{}^\beta$ depends on $x$, the operators \eqref{operator L alpha} commute, see \cite[Lemma~A.2]{wave}. In particular, there is no ordering ambiguity in the definition \eqref{operator F j} of the operators $F_j\,$.
\end{enumerate}

Formula \eqref{S minus k} provides an alternative representation of the amplitude-to-symbol operator, one that avoids reliance on multi-indices. The use of tensor index notation makes it more amenable to application in a geometric setting. Compare with \cite[formula~(A.5)]{wave} which also featured a weight, see Remark~\ref{remark weights}.

\begin{remark}
	The null Hamiltonian $h\equiv 0$ governs the constant `flow' $(x^*(t;y,\eta),\xi^*(t;y,\eta)) = (y,\eta)$. A distinguished phase function compatible with this flow reads
	\begin{equation*}
		\varphi(x;y,\eta) := \varphi(0,x;y,\eta) = (x-y)^\gamma\eta_\gamma\,,
	\end{equation*}
	with $x$ and $y$ `living' in the same coordinate chart. In this time-independent scenario, formulae \eqref{S zero} and \eqref{S minus k} amount to
	\begin{equation}
	\label{S zero time independent}
		\mathfrak{S}_0 = \left.\left(\,\cdot\,\right)\right|_{x=y}
	\end{equation}
	and
	\begin{equation}
	\label{S minus k time independent}
		\mathfrak{S}_{-k} = \frac{1}{k!}\,\mathfrak{S}_0 \left[i\,\frac{\partial^2}{\partial\eta_\beta\partial x^\beta}\right]^k\,.
	\end{equation}
	The operator \(\mathfrak{S}\sim\sum_{k\geq 0}\mathfrak{S}_{-k}\) with components \eqref{S zero time independent} and \eqref{S minus k time independent} reduces the amplitude of a pseudodifferential operator to a right symbol, see e.g.~\cite[Theorems~3.1~and~3.3]{shubin}.
\end{remark}

\section{Classification of scalar and pseudoscalar invariants}
\label{Scaling the Riemannian manifold}

Let us consider local Weyl coefficients $c_k^\pm(y)$, $k=2,1,0,-1,-2,\dots$,
for the operator $\curl$. The algorithm from Section~\ref{The algorithm} tells us that
these are expressed in terms of geometric invariants ---
metric, totally antisymmetric tensor, curvature and covariant derivatives of curvature  ---
including products and contractions of these geometric invariants.
The question at hand is which scalar or pseudoscalar geometric invariants can appear in the
explicit formula for $c_k^\pm(y)$ for a given $k$.
Recall that a pseudoscalar is a function $M\to\mathbb{R}$ which changes sign under orientation reversal
$\ast \mapsto - \ast$.

In order to answer this question we perform a scaling of the (covariant) metric tensor
\begin{equation}
\label{27 May 2026 equation 1}
g_{\alpha\beta}(x)
\mapsto
\omega^2\,
g_{\alpha\beta}(x)\,,
\end{equation}
where $\omega$ is a positive parameter.
The introduction of the scaling factor $\omega$ amounts, effectively,
to a change of unit of measurement.
It is easy to see that under the scaling \eqref{27 May 2026 equation 1}
our local counting functions transform as
\begin{equation}
\label{5 June 2026 equation 2}
N_\pm(y;\lambda)
\mapsto
\omega^{-3}\,
N_\pm(y;\omega\lambda)\,.
\end{equation}
Formula \eqref{5 June 2026 equation 2} induces the following transformation law for local Weyl coefficients:
\begin{equation}
\label{5 June 2026 equation 3}
c_k^\pm(y)
\mapsto
\omega^{k-2}\,
c_k^\pm(y)\,.
\end{equation}

\begin{definition}
\label{order of geometric invariant}
We say that a scalar or pseudoscalar geometric invariant is \emph{of order $p\in\mathbb{N}$}
if it acquires a factor
$\omega^{-p}$ under the scaling \eqref{27 May 2026 equation 1}.
\end{definition}

\begin{lemma}
\label{lemma linear combination}
Local Weyl coefficients $c_k^\pm(y)$
are linear combinations, with universal constants,
of scalar or pseudoscalar geometric invariants of degree $2-k$.
\end{lemma}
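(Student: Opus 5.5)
The proof has two ingredients: locality (each coefficient is a polynomial invariant in the geometry at $y$) and scaling homogeneity.

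\emph{Locality and polynomial structure.} I would first show that $c_k^\pm(y)$ is a universal polynomial expression in the jets of the metric at $y$. Work in geodesic normal coordinates centred at $y=0$, as in Simplification~2. The algorithm of Section~\ref{The algorithm}, with the simplifications of subsection~\ref{Simplifications}, produces the homogeneous components $v^{(\pm)}_{-j}(t;0,\eta)$ as Taylor polynomials in $t$. Their coefficients are built from the Taylor coefficients of $g_{\alpha\beta}$ at $0$, through finitely many algebraic operations, differentiations in $x$ and $\eta$, and ODE solves. These involve only the metric (and its inverse and derivatives), $E_{\alpha\beta\gamma}$, and the explicit $\eta$-dependence.

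By \eqref{14 September 2021 equation 1bis} and its higher-order continuation \cite{Uwe Muller}, the Taylor coefficients of $g$ in normal coordinates are universal polynomials in $\Riem(0)$ and its covariant derivatives at $0$. The evaluation procedure of subsection~\ref{Evaluation of Weyl coefficients} integrates over spheres in $\eta$ and uses \eqref{9 April 2026 equation 3}. Hence $c_k^\pm(0)$ is a universal polynomial in the components of $\nabla^m\Riem(0)$, contracted using $\delta_{\alpha\beta}$ and possibly $\varepsilon_{\alpha\beta\gamma}$; the latter enters only through $\curl_\prin$.

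The result is a well-defined function of $y$, independent of the residual $SO(3)$ freedom in normal coordinates and of gauge choices (Remark~\ref{gauge transformations now sit at the source}). Invariant theory for $SO(3)$ (Weyl's first fundamental theorem) then writes it as a linear combination, with universal constants, of complete contractions of products of $\nabla^{m_i}\Riem$. Each contraction uses $g$ and at most one $E$, since $E\otimes E$ is expressible through $g$. A term with zero $E$'s is a scalar; a term with one $E$ is a pseudoscalar.

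\emph{Scaling argument.} Under \eqref{27 May 2026 equation 1}, $\Riem_{\alpha\beta\gamma\delta}$ with all indices down scales by $\omega^2$, and each raised index contributes $\omega^{-2}$. Consequently a complete contraction $I$ built from $\nabla^{m_1}\Riem,\dots,\nabla^{m_r}\Riem$ picks up $\omega^{-p}$ with $p=\sum_i(2+m_i)$, so it has a well-defined order in the sense of Definition~\ref{order of geometric invariant}. The tensor $E$ with indices down scales by $\omega^3$ and contracts three raised indices, so it does not change the order.

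Group the universal linear combination for $c_k^\pm$ by order: $c_k^\pm=\sum_p I_p$ with $I_p$ homogeneous of order $p$. Comparing with \eqref{5 June 2026 equation 3} gives $\sum_p\omega^{-p}I_p=\omega^{k-2}\sum_p I_p$ for all $\omega>0$ and all metrics. Since the functions $\omega^{-p}$ are linearly independent, $I_p=0$ for $p\neq 2-k$, which proves the lemma.

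\emph{Main obstacle.} The delicate step is not the scaling but establishing the polynomial structure with universal coefficients. One has to check that finitely many terms suffice for each $k$, and that the chosen gauge and cut-offs, including the topological obstruction to a global choice of $b^{(\pm)}$, leave no non-polynomial $\eta$-dependence in the final integrated answer. I would handle this using the gauge invariance of the $v^{(+;+)}_{-k}$ established in Section~\ref{The algorithm}: it makes each $v^{(\pm)}_{-j}$ a rational function of $\eta$ with denominators powers of $|\eta|$. Spherical integration of such functions yields universal rational numbers times polynomial contractions.
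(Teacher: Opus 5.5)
Your proposal is correct and follows the paper's route. The paper takes from the algorithm of Section~\ref{The algorithm} that $c_k^\pm(y)$ is a universal combination of contractions of $g$, $E$, curvature and its covariant derivatives, and then reads off the order from the scaling law \eqref{5 June 2026 equation 3} and Definition~\ref{order of geometric invariant}. You make explicit what the paper leaves implicit: the normal-coordinate and invariant-theory justification of the polynomial structure, the order count $p=\sum_i(2+m_i)$ (unchanged by $E$), and the linear independence of the powers $\omega^{-p}$, which eliminates every term of order $p\neq 2-k$.
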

\begin{proof}
The claim follows at once from formula~\eqref{5 June 2026 equation 3}
and Definition~\ref{order of geometric invariant}.
\end{proof}

We now turn to a classification of scalar and pseudoscalar geometric invariants according to their behaviour under the rescaling~\eqref{27 May 2026 equation 1}. It is not hard to show that a geometric invariant containing $r$ instances of curvature and $q$ instances of $\nabla$ is of order $2r+q$ in the sense of Definition~\ref{order of geometric invariant} --- the number $q$ being even for scalars and odd for pseudoscalars. In light of this fact, one can produce a complete list of nonvanishing invariants of order at most five in dimension $d=3$:

\begin{longtable}{c c c c}
	\hline
	\\ [-1em]
	\multicolumn{1}{c}{\textbf{Order}} & 
	\multicolumn{1}{c}{\textbf{Scalar}} &
	\multicolumn{1}{c}{\textbf{Pseudoscalar}} &
	\multicolumn{1}{c}{\textbf{Weyl coefficient}} \\ \\ [-1em]
	\hline \hline \\ [-1em]
	$0$ & $1$ & --- & $c_2^\pm(y)$ \\[.3em] \\ [-1em]
	$1$ & --- & --- & $c_1^\pm(y)$ \\[.3em] \\ [-1em]
	$2$ & $\Sc(y)$ & --- & $c_0^\pm(y)$ \\[.3em] \\ [-1em]
	$3$ & --- & --- & $c_{-1}^\pm(y)$ \\[.3em] \\ [-1em]
	$4$ & $\Sc^2(y)\,,$ \ $\|{\oRic}(y)\|^2\,,$ \ $(\Delta\Sc)(y)$ \ & --- & $c_{-2}^\pm(y)$ \\[.3em] \\ [-1em]
	$5$ & --- & $E_\alpha{}^{\beta\gamma}(y)\,\oRic^{\alpha\mu}(y)\,\nabla_\beta\,\oRic_{\gamma\mu}(y)$ & $c_{-3}^\pm(y)$ \\[.3em] \\ [-1em]
	\hline
\end{longtable}

Observe that scalar and pseudoscalar geometric invariants of a particular order form finite-dimensional real vector spaces. What is given in the table above are, effectively, bases for such vector spaces.

\begin{remark}
One might think that, when dealing with scalar invariants of order 4, we missed a fourth basis element, namely,
$\nabla_\alpha\nabla_\beta\,\oRic^{\alpha\beta}$.
However, the contracted Bianchi identity $\nabla_\alpha\Ric^\alpha{}_\beta=\frac{1}{2}\nabla_\beta\Sc$
allows one to express
$\nabla_\alpha\nabla_\beta\,\oRic^{\alpha\beta}$
via
$\Delta\Sc\,$.
\end{remark}

\begin{remark}
	The above classification bears similarity with that presented in~\cite[Section~2.4]{gilkeyheat}. 
	Because \(\curl\) feels orientation, in this paper we are interested in the behaviour of geometric invariants under orientation reversal in addition to metric rescaling. In \cite{gilkeyheat,branson}, pseudoscalars are handled in a somewhat implicit manner, making their role less immediately clear. 
	Observe that the notion of order of a geometric invariant in the sense of \cite[Section~2.4]{gilkeyheat} agrees with that of Definition~\ref{order of geometric invariant}.
\end{remark}

\section{Symmetries of Weyl coefficients}
\label{appendix symmetry of Weyl coefficients}

In this appendix we provide a proof of Theorem~\ref{theorem symmetries of Weyl coefficients}.

\begin{proof}[Proof of Theorem~\ref{theorem symmetries of Weyl coefficients}]

Consider coefficients $c_k^\pm(y)$ with $k$ even. With account of Lemma~\ref{lemma linear combination} and the observations immediately thereafter, one concludes that $c_k^\pm(y)$ is a linear combination of scalar invariants. 
Analogously, coefficients $c_k^\pm(y)$ with $k$ odd are linear combinations of pseudoscalar invariants. 
These facts imply~\eqref{Symmetries of Weyl coefficients equation}, as soon as one observes that under orientation reversal one has
\[
N_\pm(y;\lambda)\mapsto N_\mp(y;\lambda)\,.\qedhere
\] 
\end{proof}

\end{appendices}


\begin{thebibliography}{42}
\addcontentsline{toc}{section}{References}


\bibitem{asymm1}
M.F.~Atiyah, V.K.~Patodi and I.M.~Singer, 
Spectral asymmetry and Riemannian geometry,
{\it Bull.~Lond.~Math.~Soc.} \textbf{5} (1973) 229--234.
DOI: \href{https://doi.org/10.1112/blms/5.2.229}{10.1112/blms/5.2.229}.

\bibitem{asymm2}
M.F.~Atiyah, V.K.~Patodi and I.M.~Singer, 
Spectral asymmetry and Riemannian geometry I,
{\it Math.~Proc.~Camb.~Philos.~Soc.} \textbf{77} (1975) 43--69.
DOI: \href{https://doi.org/10.1017/S0305004100049410}{10.1017/S0305004100049410}.

\bibitem{asymm3}
M.F.~Atiyah, V.K.~Patodi and I.M.~Singer, 
Spectral asymmetry and Riemannian geometry II,
{\it Math.~Proc.~Camb.~Philos.~Soc.} \textbf{78} (1975) 405--432.
DOI: \href{https://doi.org/10.1017/S0305004100051872}{10.1017/S0305004100051872}.

\bibitem{asymm4}
M.F.~Atiyah, V.K.~Patodi and I.M.~Singer, 
Spectral asymmetry and Riemannian geometry III,
{\it Math.~Proc.~Camb.~Philos.~Soc.} \textbf{79} (1976) 71--99.
DOI: \href{https://doi.org/10.1017/S0305004100052105}{10.1017/S0305004100052105}.

%

%
%
%
%
%


\bibitem{baer_curl}
C.~B\"ar, 
The curl operator on odd-dimensional manifolds,
{\it J.~Math.~Phys.} {\bf 60} (2019) 031501.
DOI: \href{https://doi.org/10.1063/1.5082528}{10.1063/1.5082528}.

%
%

%
%

\bibitem{birman_curl}
M.~Sh.~Birman and M.~Z.~Solomyak,
The Weyl asymptotics of the spectrum of the Maxwell operator for domains with a Lipschitz boundary, 
{\it Vestn.~Leningr.~Univ.~Mat.} {\bf 20} no.~3 (1987) 15--21.

\bibitem{birman_curl2}
M.~Sh.~Birman and M.~Z.~Solomyak,
$L^2$-Theory of the Maxwell operator in arbitrary domains, 
{\it Uspekhi Mat.~Nauk} {\bf 42} no.~6 (1987) 61--76; {\it Russian Math. Surveys} {\bf 42} no.~6 (1987) 75--96.
DOI: \href{https://doi.org/10.1070/RM1987v042n06ABEH001505}{10.1070/RM1987v042n06ABEH001505}.



\bibitem{branson}
T.~P.~Branson and P.~B.~Gilkey, Residues of the eta function for an operator
of Dirac type,
{\it J.~Funct.~Anal.} \textbf{108} (1992) 47--87.
DOI: \href{https://doi.org/10.1016/0022-1236(92)90146-A}{10.1016/0022-1236(92)90146-A}.


%
%

\bibitem{diagonalisation}
M.~Capoferri,
Diagonalization of elliptic systems via pseudodifferential projections,
{\it J.~Differ.~Equ.} {\bf 313} (2022) 157--187.
DOI: \href{https://doi.org/10.1016/j.jde.2021.12.032}{10.1016/j.jde.2021.12.032}.

\bibitem{dirac_asymmetry}
M.~Capoferri, B.~Costeri, C.~Dappiaggi,
Spectral asymmetry via pseudodifferential projections: the massless Dirac operator,
{\it Anal.~Math.~Phys.} {\bf 16} no.~74 (2026).
DOI: \href{https://doi.org/10.1007/s13324-026-01210-w}{10.1007/s13324-026-01210-w}.


\bibitem{wave}
M.~Capoferri, M.~Levitin and D.~Vassiliev,
Geometric wave propagator on Riemannian manifolds,
\emph{Comm.~Anal.~Geom.} {\bf 30} no.~8 (2022) 1713--1777.
DOI: \href{https://doi.org/10.4310/CAG.2022.v30.n8.a2}{10.4310/CAG.2022.v30.n8.a2}.

\bibitem{obstructions}
M.~Capoferri, G.~Rozenblum, N.~Saveliev and D.~Vassiliev,
Topological obstructions to the diagonalisation of pseudodifferential systems,
{\it Proc.~Amer.~Math.~Soc.~(Ser.~B)} {\bf 9} (2022) 472--486.
DOI: \href{https://doi.org/10.1090/bproc/147}{10.1090/bproc/147}.



\bibitem{dirac}
M.~Capoferri and D.~Vassiliev,
Global propagator for the massless Dirac operator and spectral asymptotics,
{\it Integral Equ.~Oper.~Theory} {\bf 94} (2022) 30.
DOI: \href{https://doi.org/10.1007/s00020-022-02708-1}{10.1007/s00020-022-02708-1}.

\bibitem{part1}
M.~Capoferri and D.~Vassiliev,
Invariant subspaces of elliptic systems I: pseudodifferential projections,
{\it J.~Funct.~Anal.} {\bf 282} no.~8 (2022) 109402.
DOI: \href{https://doi.org/10.1016/j.jfa.2022.109402}{10.1016/j.jfa.2022.109402}.

\bibitem{part2}
M.~Capoferri and D.~Vassiliev,
Invariant subspaces of elliptic systems II: spectral theory,
{\it J.~Spectr.~Theory} {\bf 12} no.~1 (2022) 301--338.
DOI: \href{https://doi.org/10.4171/JST/402}{10.4171/JST/402}.

\bibitem{curl}
M.~Capoferri and D.~Vassiliev,
Beyond the Hodge theorem: curl and asymmetric pseudodifferential projections,
{\it J.~London Math.~Soc.}
{\bf 113} no.~1 (2026) e70431.
DOI: \href{https://doi.org/10.1112/jlms.70431}{10.1112/jlms.70431}.

\bibitem{conjectures}
M.~Capoferri and D.~Vassiliev, 
A microlocal pathway to spectral asymmetry: curl and the eta
invariant. Preprint \href{https://doi.org/10.48550/arXiv.2502.18307}{arXiv:2502.18307} (2025). 

\bibitem{CDV}
O.~Chervova, R.~J.~Downes and D.~Vassiliev,
The spectral function of a first order elliptic system,
{\it J.~Spectr.~Theory} \textbf{3} no.~3 (2013) 317--360. 
DOI: \href{https://doi.org/10.4171/JST/47}{10.4171/JST/47}.

%
%
%


\bibitem{filonov_curl1}
M.N.~Demchenko and N.~Filonov, 
Spectral asymptotics of the Maxwell operator on Lipschitz manifolds with boundary, in: \emph{Spectral Theory of Differential Operators: M.Sh.~Birman 80th Anniversary Collection} (Amer.~Math.~Soc., Providence, RI, 2008), 73--90.
DOI: \href{https://doi.org/10.1090/trans2/225}{10.1090/trans2/225}.



\bibitem{DuGu}
J.~J.~Duistermaat and V.~W.~Guillemin,
The spectrum of positive elliptic operators and periodic bicharacteristics,
{\it Invent.~Math.} \textbf{29} no.~1 (1975) 39--79.
DOI: \href{https://doi.org/10.1007/BF01405172}{10.1007/BF01405172}.


\bibitem{peralta2}
A.~Enciso and D.~Peralta-Salas,
Non-existence of axisymmetric optimal domains with smooth boundary for the first curl eigenvalue, {\it Ann.~Sc.~Norm.~Super.~Pisa Cl.~Sci.} {\bf XXIV} (2023) 311--327.
DOI: \href{https://doi.org/10.2422/2036-2145.202010_008}{10.2422/2036-2145.202010\_008}.

\bibitem{peralta1}
A.~Enciso, W.~Gerner and D.~Peralta-Salas,
Optimal convex domains for the first curl eigenvalue,
{\it Trans.~Am.~Math.~Soc.}, {\bf 377} no.~7 (2024) 4519--4540.
DOI: \href{https://doi.org/10.1090/tran/8914}{10.1090/tran/8914}.

\bibitem{peralta3}
A.~Enciso, W.~Gerner and D.~Peralta-Salas,
Optimal metrics for the first curl eigenvalue on $3$-manifolds,
{\it Calc.~Var.~Partial Differ.~Equ.} {\bf 64} (2025) 146.
DOI: \href{https://doi.org/10.1007/s00526-025-02995-7}{10.1007/s00526-025-02995-7}.


\bibitem{fang_PhD}
Y.-L. Fang,
{\it Analysis of first order systems on manifolds without boundary: A spectral theoretic approach}.
PhD Thesis, University College London, 2017.
URL: \url{https://discovery.ucl.ac.uk/id/eprint/1560979}.


\bibitem{filonov_curl3}
N.~Filonov,
Spectral analysis of the selfadjoint operator curl in a region of finite measure,
{\it Algebra i Anal.} {\bf 11} no.~6 (1999) 178--190; 
{\it St.~Petersburg Math.~J.} {\bf 11} no.~6 (2000) 1085--1095.

\bibitem{filonov_curl2}
N.~Filonov, 
Weyl asymptotics of the spectrum of the Maxwell operator in Lipschitz domains of arbitrary dimension, 
{\it Algebra i Anal.} {\bf 25} no.~1 (2013) 170--215;
{\it St.~Petersburg Math.~J.} {\bf 25} no.~1 (2014) 117--149.
DOI: \href{https://doi.org/10.1090/S1061-0022-2013-01282-9}{10.1090/S1061-0022-2013-01282-9}.



\bibitem{giga}
Z.~Yoshida and Y.~Giga, 
Remarks on spectra of operator rot,
{\it Math.~Z.} {\bf 204} (1990) 235--245.
DOI: \href{https://doi.org/10.1007/BF02570870}{10.1007/BF02570870}.


\bibitem{gilkeyheat}
P.~B.~Gilkey, 
{\it Invariance theory, the heat equation, and the Atiyah--Singer index theorem}, 2\textsuperscript{nd}~ed., Studies in Advanced Mathematics,
CRC Press, 1994.

%



\bibitem{hitchin}
N.~Hitchin,
Harmonic spinors,
{\it Adv.~Math.} {\bf 14} no.~1 (1974) 1--55.
DOI: \href{https://doi.org/10.1016/0001-8708(74)90021-8}{10.1016/0001-8708(74)90021-8}.


%

\bibitem{Ivr80}
V.~Ivrii,
Second term of the spectral asymptotic expansion of the Laplace--Beltrami operator on manifolds with boundary,
{\it Funct.~Anal.~Appl.} \textbf{14} (1980) 98--106.
DOI: \href{https://doi.org/10.1007/BF01086550}{10.1007/BF01086550}.


\bibitem{Ivr84}
V.~Ivrii,
{\it Precise spectral asymptotics for elliptic operators
	acting in fiberings over manifolds with boundary},
Lecture Notes in Mathematics \textbf{1100}, Springer-Verlag, Berlin, 1984.
DOI: \href{https://doi.org/10.1007/BFb0072205}{10.1007/BFb0072205}.

\bibitem{Ivr98}
V.~Ivrii,
{\it Microlocal analysis and precise spectral asymptotics},
Springer-Verlag, Berlin, 1998.
DOI: \href{https://doi.org/10.1007/978-3-662-12496-3}{10.1007/978-3-662-12496-3}.

%

%
%


\bibitem{LSV}
A.~Laptev, Yu.~Safarov and D.~Vassiliev,
On global representation of Lagrangian distributions and solutions of hyperbolic equations,
{\it Commun.~Pure~Appl. Math.} \textbf{47}  no.~11 (1994) 1411--1456.
DOI: \href{https://doi.org/10.1002/cpa.3160471102}{10.1002/cpa.3160471102}.




\bibitem{lerner}
N.~Lerner and F.~Vigneron,
On some properties of the curl operator and their consequences for the Navier-Stokes system, {\it Commun.~Math.~Res.} {\bf 38} no.~4 (2022) 449--497.
DOI: \href{https://doi.org/10.4208/cmr.2021-0106}{10.4208/cmr.2021-0106}.


%
%
%

\bibitem{lotay}
J.~Lotay,
Stability of coassociative conical singularities,
\emph{Comm.~Anal.~Geom.} {\bf 20} no.~4 (2012) 803--867.
DOI: \href{https://doi.org/10.4310/CAG.2012.v20.n4.a5}{10.4310/CAG.2012.v20.n4.a5}.


%
%

\bibitem{Uwe Muller}
U.~M\"uller, C.~Schubert and A.E.M.~van de Ven,
A closed formula for the Riemann normal coordinate expansion,
{\it Gen.~Relativ.~Gravit.} \textbf{31} (1999) 1759--1768.
DOI: \href{https://doi.org/10.1023/A:1026718301634}{10.1023/A:1026718301634}.

%
%
%

\bibitem{rellich}
F.~Rellich,
{\it Perturbation theory of eigenvalue problems},
Courant Institute of Mathematical Sciences, New York University, 1954.

\bibitem{safarov_curl}
Yu.~Safarov,
Asymptotic behavior of the spectrum of the Maxwell operator, 
{\it J.~Sov.~Math.} {\bf 27} (1984) 2655--2661.
DOI: \href{https://doi.org/10.1007/BF01103726}{10.1007/BF01103726}.


\bibitem{SaVa}
Yu.~Safarov and D.~Vassiliev,
{\it The asymptotic distribution of eigenvalues of partial differential operators},
Am.~Math.~Soc., Providence (RI), 1997.
DOI: \href{https://doi.org/10.1090/mmono/155}{10.1090/mmono/155}

%

%
%
%

\bibitem{shubin}
M.A.~Shubin,
{\it Pseudodifferential operators and spectral theory},
Springer, 2001.
DOI: \href{https://doi.org/10.1007/978-3-642-56579-3}{10.1007/978-3-642-56579-3}.

%

%
%

%
%
%
%

\end{thebibliography}
\end{document}